\newcommand{\erase}[1]{}
\newtheorem{theorem}{Theorem}[section]
\newtheorem{lemma}[theorem]{Lemma}
\newtheorem{proposition}[theorem]{Proposition}
\newtheorem{corollary}[theorem]{Corollary}
\newtheorem{_algorithm}[theorem]{Algorithm}
\newtheorem{_procedure}[theorem]{Procedure}
\newtheorem{_definition}[theorem]{Definition}
\newenvironment{definition}{\begin{_definition}\rm}{\end{_definition}}
\newtheorem{_remark}[theorem]{\it Remark}
\newenvironment{remark}{\begin{_remark}\rm}{\end{_remark}}
\newtheorem{_example}[theorem]{Example}
\newenvironment{example}{\begin{_example}\rm}{\end{_example}}
\newtheorem{_assumption}[theorem]{Assumption}
\newtheorem{_construction}[theorem]{Construction}
\newtheorem{_claim}[theorem]{Claim}
\newtheorem{_conjecture}[theorem]{Conjecture}
\numberwithin{equation}{section}
\numberwithin{table}{section}
\numberwithin{figure}{section}
\newcommand{\C}{\mathord{\mathbb C}}
\newcommand{\F}{\mathord{\mathbb F}}
\renewcommand{\P}{\mathord{\mathbb  P}}
\newcommand{\Q}{\mathord{\mathbb  Q}}
\newcommand{\R}{\mathord{\mathbb R}}
\newcommand{\Z}{\mathord{\mathbb Z}}
\newcommand{\CCC}{\mathord{\mathcal C}}
\newcommand{\DDD}{\mathord{\mathcal D}}
\newcommand{\III}{\mathord{\mathcal I}}
\newcommand{\KKK}{\mathord{\mathcal K}}
\newcommand{\LLL}{\mathord{\mathcal L}}
\newcommand{\OOO}{\mathord{\mathcal O}}
\newcommand{\PPP}{\mathord{\mathcal P}}
\newcommand{\RRR}{\mathord{\mathcal R}}
\newcommand{\SSS}{\mathord{\mathcal S}}
\newcommand{\TTT}{\mathord{\mathcal T}}
\newcommand{\VVV}{\mathord{\mathcal V}}
\newcommand{\WWW}{\mathord{\mathcal W}}
\newcommand{\maprightsp}[1]{\; \smash{\mathop{\; \longrightarrow \; }\limits\sp{#1}}\; }
\newcommand{\maprightsb}[1]{\; \smash{\mathop{\; \longrightarrow \; }\limits\sb{#1}}\; }
\newcommand{\mapdown}{\phantom{\Big\downarrow}\hskip -8pt \downarrow}
\newcommand{\mapdownright}[1]{\mapdown\rlap{$\vcenter{\hbox{$\scriptstyle#1$}}$}}
\newcommand{\mapdownleft}[1]{\rlap{$\vcenter{\hbox{$\scriptstyle#1$}}$}%
\phantom{\Big\downarrow}\mapdown}
\newcommand{\mapdownsurj}{
\hbox{$\bigm\downarrow$}
\llap{\hbox{\raise 2pt\hbox{$\bigm\downarrow$}}}%
\vstrechmapdown
}
\newcommand{\inj}{\hookrightarrow}
\newcommand{\isom}{\mathbin{\,\raise -.6pt\rlap{$\to$}\raise 3.5pt \hbox{\hskip .3pt$\mathord{\sim}$}\,}}
\newcommand{\set}[2]{\{\; {#1} \; \mid \; {#2} \;  \}}
\newcommand{\shortset}[2]{\{ {#1} \,|\, {#2}   \}}
\newcommand{\gen}[1]{\langle {#1}  \rangle}
\newcommand{\tensor}{\otimes}
\newcommand{\sprime}{\sp\prime}
\newcommand{\spprime}{\sp{\prime\prime}}
\newcommand{\sperp}{\sp{\perp}}
\newcommand{\dual}{\sp{\vee}}
\newcommand{\inv}{\sp{-1}}
\newcommand{\Hom}{\mathord{\mathrm {Hom}}}
\newcommand{\GL}{\mathord{\mathrm {GL}}}
\newcommand{\PGU}{\mathord{\mathrm {PGU}}}
\newcommand{\PSU}{\mathord{\mathrm {PSU}}}
\newcommand{\PGL}{\mathord{\mathrm {PGL}}}
\newcommand{\OG}{\mathord{\mathrm {O}}}
\newcommand{\id}{\mathord{\mathrm {id}}}
\newcommand{\Ker}{\operatorname{\mathrm {Ker}}\nolimits}
\newcommand{\Aut}{\operatorname{\mathrm {Aut}}\nolimits}
\newcommand{\pr}{\mathord{\mathrm {pr}}}
\newcommand{\rank}{\operatorname{\mathrm {rank}}\nolimits}
\newcommand{\closure}[1]{\overline{#1}}
\newcommand{\rmand}{\textrm{and}}
\newcommand{\quand}{\quad\rmand\quad}
\newcommand{\Tr}{\mathord{\rm Tr}}
\newcommand{\Pic}{\mathord{\rm Pic}}
\newcommand{\KerE}[1]{E_{#1}}
\newcommand{\Nef}{\mathord{\mathrm{NC}}}
\newcommand{\intM}[2]{\langle{#1}\rangle_{#2}}
\newcommand{\aut}{\Aut}
\newcommand{\sphyp}{\sp*}
\newcommand{\DR}{\mathord{{\rm DR}}}
\newcommand{\End}{\mathord{\rm End}}
\newcommand{\Km}{\mathord{\rm Km}}
\newcommand{\tilA}{\tilde{A}}
\newcommand{\SA}{S_A}
\newcommand{\SY}{S_Y}
\newcommand{\StilA}{S_{\tilA}}
\newcommand{\Fr}{\mathord{\rm Fr}}
\newcommand{\sGamma}{\Gamma}
\newcommand{\period}{\KKK}
\newcommand{\Gram}{\mathord{\rm G}}
\newcommand{\Walls}{\WWW}
\newcommand{\sqrttwo}{\sqrt{2}}
\newcommand{\Wgr}{W^{(-2)}}
\newcommand{\shrnk}{\hskip -6pt}
\newcommand{\nalmed}{\noalign{\vskip 2pt}}
\begin{document}

\title[Supersingular $K3$ surface in characteristic $5$]
{On the supersingular $K3$ surface in characteristic $5$ with Artin invariant $1$}

\author{Toshiyuki Katsura}
\author{Shigeyuki Kondo}
\author{Ichiro Shimada}

\address{T.\ Katsura:
Faculty of Science and Engineering, 
Hosei University, 
Koganei-shi, 
Tokyo,
184-8584 JAPAN
}
\email{
toshiyuki.katsura.tk@hosei.ac.jp
}
\address{S.\ Kondo:
Graduate School of Mathematics, 
Nagoya University, 
Nagoya,
 464-8602 JAPAN
}
\email{
 kondo@math.nagoya-u.ac.jp
}

\address{I.\ Shimada:
Department of Mathematics, 
Graduate School of Science, 
Hiroshima University,
1-3-1 Kagamiyama, 
Higashi-Hiroshima, 
739-8526 JAPAN
}
\email{
shimada@math.sci.hiroshima-u.ac.jp
}

\thanks{Partially supported by
 JSPS Grants-in-Aid for Scientific Research (C) No. 24540053, (S) No.22224001, (C) No.25400042.
}

\begin{abstract}
We  present  three interesting projective models of the supersingular 
$K3$ surface $X$ in characteristic $5$ with Artin invariant $1$.  
For each projective model, 
we  determine
smooth rational curves on $X$ with the  minimal degree  and the projective automorphism group. 
Moreover, by using the superspecial
abelian surface, we construct six sets of $16$ disjoint smooth rational curves on $X$,
and show that they form a beautiful configuration.
\end{abstract}

\subjclass[2000]{14J28, 14G17}

\maketitle

\section{Introduction}\label{sec:Intro}
Let $Y$ be a $K3$ surface defined over an algebraically closed field $k$, and
$\rho(Y)$ the Picard number of $Y$.  
Then it is well-known that $1\leq \rho(Y) \leq 20$ or $\rho(Y)=22$.  
The last case $\rho(Y)=22$ 
occurs only when $k$ is of positive characteristic.
A $K3$ surface is called \emph{supersingular} if its Picard number is $22$.
Let $Y$ be a supersingular $K3$ surface in characteristic $p\ge 3$.
Let $S_Y$ denote its N\'eron-Severi lattice and let $S_Y\dual$ be the dual of $S_Y$.
Then Artin~\cite{MR0371899} proved that 
$S_Y\dual/S_Y$ 
is a $p$-elementary abelian group of rank $2\sigma$, 
where $\sigma$ is an integer such that $1\le \sigma\le 10$.
This integer $\sigma$ is called the \emph{Artin invariant} of $Y$.
It is known that the isomorphism class of $S_Y$ depends only  on $p$ and $\sigma$
(Rudakov~and~Shafarevich~\cite{MR633161}).  
On the other hand, 
supersingular $K3$ surfaces with Artin invariant $\sigma$ form a $(\sigma -1)$-dimensional
family and a supersingular $K3$ surface with Artin invariant $1$ 
in characteristic $p$ 
is unique up to isomorphisms
(Ogus~\cite{MR563467, MR717616}, Rudakov~and~Shafarevich~\cite{MR633161}).

Supersingular $K3$ surfaces in \emph{small} characteristic $p$ with Artin invariant $1$ are especially interesting 
because big finite groups act on them 
by automorphisms. 
(See~Dolgachev~and~Keum~\cite{MR2480606}).
For example, the group $\PGL(3,\F_4)\ltimes \Z/2\Z$ in case $p=2$ or $\PGU(4,\F_9)$ in case $p=3$ acts on the $K3$ surface 
by automorphisms. 
Moreover these $K3$ surfaces 
contain a finite set of smooth rational curves on which the above group acts as
symmetries.  For example, in case $p=2$, there exist $42$ smooth rational curves which form a $(21_5)$-configuration 
(see Dolgachev~and~Kondo~\cite{MR1935564K}, Katsura~and~Kondo~\cite{MR2987663}).  
In case $p=3$, 
the Fermat quartic surface is a supersingular $K3$ surface with Artin invariant $1$, 
and it contains $112$ lines (e.g. Katsura~and~Kondo~\cite{MR2862188}, Kondo and Shimada~\cite{KondoShimadaS}).

In this paper we consider a similar problem for the supersingular $K3$ surface
in characteristic $5$ with Artin invariant $1$.
We work over an algebraically closed field $k$ of characteristic $5$
containing the finite field $\F_{25}=\F_5(\sqrt{2})$.
Let $C_F$ be the Fermat sextic curve in $\P^2$ defined by 
\begin{equation}\label{Fermatsextic}
x^6 + y^6+z^6=0.
\end{equation}
Note that the left hand side of the equation (\ref{Fermatsextic}) 
is a Hermitian form over $\F_{25}$ and the projective unitary group $\PGU(3, \F_{25})$ acts on $C_F$
by automorphisms.
Let $\pi_F: X\to \P^2$ be the double cover of 
$\P^2$ branched along $C_F$.  
Then $X$ is a supersingular $K3$ surface
in characteristic $5$ with Artin invariant $1$, on which the finite group $\PGU(3,\F_{25})\ltimes \Z/2\Z$ acts
by automorphisms
(e.g. Dolgachev~and~Keum~\cite{MR2480606}).  
Let $P$ be an $\F_{25}$-rational point of $C_F$.  
Then the tangent line
$\ell_P$ to $C_F$ at $P$ intersects $C_F$ at $P$ with multiplicity 6.  Hence the pullback of $\ell_P$ on
$X$ splits into two smooth rational curves meeting at one point with multiplicity $3$.
Since the number of $\F_{25}$-rational points of $C_F$ is $126$, we obtain $252$ smooth rational curves on $X$.

The main result of this paper is to exhibit three projective models of $X$ and determine
smooth rational curves of minimal degree on $X$ with respect to the corresponding polarizations.
\begin{theorem}\label{thm:main}
There exist three polarizations $h_F, h_1, h_2$ of degree $2, 60, 80$ 
on $X$ satisfying the following conditions{\rm :}

\begin{itemize}
\item[(1)] The projective model $(X, h_F)$ is the double cover of 
$\P^2$ branched along 
$C_F$.
Here $h_F\in S_X$ is the class of the pull-back of a line on $\P^2$ by the covering 
morphism $\pi_F: X\to \P^2$.
The projective automorphism group $\Aut(X, h_F)$ of $(X, h_F)$ is
a central extension of 
$\PGU(3, \F_{25})$ 
by the cyclic group of order $2$ generated by the deck-transformation of $X$ over $\P^2$.
The double plane $(X, h_F)$ 
 contains exactly $252$ smooth rational curves of degree $1$, on which $\Aut(X, h_F)$ acts transitively.

\item[(2)] The projective automorphism group of $(X, h_1)$ is isomorphic to the alternating group $\mathfrak{A}_8$.
The minimal degree of curves on $(X, h_1)$ is $5$,
and $(X, h_1)$ contains exactly $168$ smooth rational curves of degree $5$,
on which $\Aut(X, h_1)$ acts transitively.

\item[(3)] The projective automorphism group of $(X, h_2)$ is isomorphic to 
$$
(\Z/2\Z)^4\rtimes (\Z/3\Z \times \mathfrak{S}_4)
$$
of order $1152$.
The minimal degree of curves on $(X, h_2)$ is $5$,
and $(X, h_2)$ contains exactly $96$ smooth rational curves of degree $5$,
which decompose into two orbits under the action of 
$\Aut(X, h_2)$.
\end{itemize}
\end{theorem}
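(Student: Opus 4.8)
The plan is to reduce everything to computations inside the N\'eron--Severi lattice $S_X$. Since $X$ is the unique supersingular $K3$ surface with Artin invariant $1$ in characteristic $5$, the lattice $S_X$, together with its crystalline period, is completely known, and by Ogus's crystalline Torelli theorem (valid in characteristic $\ge 5$) the automorphism group $\Aut(X)$ is identified with the group of isometries of $S_X$ preserving the ample cone (and the period). Each of $h_F,h_1,h_2$ will be exhibited as an explicit ample vector of $S_X$ of the stated square; then for any ample $h$ one has $\Aut(X,h)=\{g\in\Aut(X):g^*h=h\}$, and since an isometry fixing $h$ automatically preserves the Weyl chamber (the ample cone) through $h$, this group is, via Torelli, the stabilizer $\{\varphi\in O(S_X):\varphi(h)=h\}$, a finite group (it embeds into the orthogonal group of the negative-definite lattice $h^\perp\cap S_X$). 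The smooth rational curves of a given degree $d$ with respect to $h$ are among the $(-2)$-vectors $v\in S_X$ with $h\cdot v=d$, so minimal degrees and curve counts become lattice enumerations.

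For part (1) the geometry is classical. As $C_F$ is smooth of degree $6$, $\pi_F$ is finite, so $h_F$, the pull-back of a line, is ample with $h_F^2=2$, the linear system $|h_F|$ recovers $\pi_F$, and the deck involution $\iota$ is intrinsic to the polarized pair $(X,h_F)$. Hence every $g\in\Aut(X,h_F)$ commutes with $\iota$ and descends to an automorphism of $(\P^2,C_F)$; conversely $\Aut(\P^2,C_F)=\Aut(C_F)=\PGU(3,\F_{25})$ — the first equality because $\deg C_F\ge 4$, the second because $C_F$ is the Hermitian sextic — and this group lifts $\iota$-equivariantly to $X$, giving the central extension by $\langle\iota\rangle$. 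For the count: if $v\in S_X$ satisfies $v^2=-2$ and $h_F\cdot v=1$, then $v$ is effective (Riemann--Roch forces $v$ or $-v$ effective, and $h_F$ nef rules out $-v$), and writing $v=\sum a_iC_i$ with $h_F\cdot C_i\ge1$ for every curve $C_i$ forces $v$ to be the class of a single smooth rational curve of degree $1$; thus the number of such curves equals $\#\{v\in S_X: v^2=-2,\ h_F\cdot v=1\}$, a direct lattice computation giving $252$. Transitivity of $\Aut(X,h_F)$ on them follows because $\PGU(3,\F_{25})$ is transitive on the $126$ tangent lines at $\F_{25}$-points while $\iota$ exchanges the two components lying over each of them.

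For parts (2) and (3) the work is computational. Here $h_1$ (degree $60$) and $h_2$ (degree $80$) are produced from the explicit projective models of $X$ constructed in the body of the paper, together with the configurations of rational curves coming from the superspecial abelian surface. Ampleness of $h_i$ is verified by checking that $h_i$ lies in the positive cone and that the negative-definite lattice $h_i^\perp\cap S_X$ contains no vector of square $-2$. The automorphism group is obtained by computing the finite stabilizer $G_i=\{\varphi\in O(S_X):\varphi(h_i)=h_i\}$, identifying it abstractly with $\mathfrak{A}_8$ (resp. with $(\Z/2\Z)^4\rtimes(\Z/3\Z\times\mathfrak{S}_4)$ of order $1152$), and checking that the symmetries already visible on the corresponding model generate all of $G_i$; by the reduction above, $G_i=\Aut(X,h_i)$. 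Finally, the minimal degree $5$ and the numbers $168$ and $96$ (the latter splitting into two $G_2$-orbits) are read off after enumerating all $(-2)$-vectors $v$ with $1\le h_i\cdot v\le5$ and deciding which are classes of smooth rational curves; the efficient way to carry out the whole enumeration — producing, with its $G_i$-action, the list of $(-2)$-curves of bounded degree — is Borcherds' method in the form available for the N\'eron--Severi lattices of supersingular $K3$ surfaces.

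The essential difficulty is twofold. Conceptually, one must first recognize the right polarizations $h_1,h_2$ and relate their models to the abelian-surface construction, which is the content of the preceding sections. Technically, one must carry out the stabilizer computation in $O(S_X)$ and the Borcherds-method enumeration rigorously: the delicate points are proving that the list of small-degree $(-2)$-curves is complete and that $G_i$ has been determined exactly rather than merely a subgroup, since $S_X$ has rank $22$ and the groups in question are large.
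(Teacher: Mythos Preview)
Your treatment of part~(1) is essentially correct and matches the classical picture. For parts~(2) and~(3), however, there are genuine gaps.

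First, you write $\Aut(X,h)=\{\varphi\in O(S_X):\varphi(h)=h\}$, silently dropping the period condition you acknowledged earlier. This is not harmless: the Frobenius substitution $\sqrt{2}\mapsto-\sqrt{2}$ induces an isometry of $S_X$ fixing $h_F$ (hence preserving the ample cone) but \emph{not} the period $\period_X$, so the naive stabilizer strictly contains $\Aut(X,h_F)$. The period constraint must be imposed throughout, and the paper checks it explicitly via the image in $\OG(q_{S_X})$. Relatedly, your ampleness test ``$h_i^\perp\cap S_X$ has no $(-2)$-vector'' only places $h_i$ in the interior of \emph{some} Weyl chamber, not necessarily $\Nef(X)$.

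Second, you misidentify where $h_1$ and $h_2$ come from. They are not produced from the Kummer or abelian-surface models; those constructions serve Theorem~\ref{thm:six}, not this one. In the paper, $h_1$ and $h_2$ arise from the Borcherds method itself, used structurally rather than as a mere enumeration tool: one embeds $S_X$ primitively into the even unimodular hyperbolic lattice $L$ of rank $26$, and the Conway chambers of $L$ tessellate $\PPP_{S_X}$ by \emph{induced chambers}. The class $h_F$ is the $S_X$-projection of a Weyl vector $w_0$ and sits inside an induced chamber $D_0\subset\Nef(X)$. The walls of $D_0$ fall into three $\aut_X(D_0)$-orbits; crossing a wall in each of the two non-$(-2)$ orbits yields adjacent induced chambers $D_1,D_2$, and $h_i=5\,w_{i,S}$ is (a multiple of) the projection of the new Weyl vector. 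This construction gives for free that $h_i$ lies in the interior of $D_i\subset\Nef(X)$ (so is genuinely ample), that $\Aut(X,h_i)=\aut(D_i)\cap G_X$, and that the smooth rational curves of minimal $h_i$-degree are precisely the $(-2)$-walls of $D_i$, i.e.\ the Leech roots orthogonal to the complement $R$. The counts $252$, $168$, $96$ then reduce to counting such Leech roots, done either by machine (Section~\ref{sec:bycomputer}) or by hand via the combinatorics of octads in the Golay code (Section~\ref{sec:bylattice}). Without this chamber picture you have no mechanism for discovering $h_1,h_2$ in the first place, nor for certifying that the degree-$5$ curves you list are exactly the minimal ones.
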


The model $(X,h_F)$ has been  known as mentioned above.  
However we  give  another
proof of the existence of such a polarization $h_F$ on $X$ by using
the Borcherds method~\cite{MR913200, MR1654763} and a geometry of the Leech lattice.

The set of the $96$ smooth rational curves in Theorem~\ref{thm:main}~(3) possesses the following remarkable property.
Let $\SSS$ and $\SSS\sprime$ be two sets of  disjoint $16$ smooth rational curves on a $K3$ surface.
We say that $\SSS$ and $\SSS\sprime$  form a $(16_r)$-configuration
if every member in one set intersects  exactly $r$ members in the other set with multiplicity $1$
and is disjoint from the remaining $16-r$ members.
For example, a $(16_6)$-configuration appears in the theory of Kummer surfaces associated to 
the Jacobian of a smooth curve of genus two:  sixteen 2-torsion points on the Jacobian, the theta divisor and its translations  by 2-torsion points (Chapter 6 of Griffiths~and~Harris~\cite{MR1288523}).
\begin{theorem}\label{thm:six}
There exist six sets
$$
\SSS_{00}, \SSS_{01}, \SSS_{02}, \SSS_{10}, \SSS_{11}, \SSS_{12}
$$
of disjoint $16$ smooth rational curves on $X$
with the following properties.
\begin{itemize}
\item[(a)] If $i\ne j$,
then $\SSS_{\nu i}$ and $\SSS_{\nu j}$ form a $(16_6)$-configuration for $\nu=0$ and $1$.
\item[(b)] For $i=0,1,2$, the sets $\SSS_{0i}$ and $\SSS_{1i}$ form a $(16_{12})$-configuration.
\item[(c)] If $i\ne j$,
then $\SSS_{0 i}$ and $\SSS_{1 j}$ form a $(16_4)$-configuration.
\end{itemize}
\end{theorem}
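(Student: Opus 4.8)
The plan is to realize $X$ as the Kummer surface of the superspecial abelian surface and to obtain the six sets as orbits of its $2$-torsion acting by translations. Let $E$ be the supersingular elliptic curve in characteristic $5$, so that $\mathrm{End}(E)$ is a maximal order in the quaternion algebra $B_{5,\infty}$, and put $A=E\times E$. Then the N\'eron-Severi lattice $S_A$ has rank $6$, and a computation of its discriminant form shows that the Kummer surface $\mathrm{Km}(A)$ --- the minimal resolution of $A/\langle\pm1\rangle$ --- is the supersingular $K3$ surface with Artin invariant $1$; hence $\mathrm{Km}(A)\cong X$. Under this identification the $16$ two-torsion points of $A$ give the $16$ exceptional curves of $\mathrm{Km}(A)\to A/\langle\pm1\rangle$, which are pairwise disjoint smooth rational curves forming one of the desired sets, say $\SSS_{00}$; moreover $A[2]\cong(\Z/2\Z)^4$ acts on $X$ by translations, preserves $h_2$, and I expect it to be exactly the normal subgroup $(\Z/2\Z)^4$ of $\Aut(X,h_2)$ of Theorem~\ref{thm:main}~(3), permuting $\SSS_{00}$ simply transitively.

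Next I would produce the remaining five sets. Each should be the image on $X$ of a full $A[2]$-orbit of symmetric theta divisors for an irreducible principal polarization on $A$, or of the $2$-torsion of an abelian surface $B$ equipped with an isomorphism $\mathrm{Km}(B)\cong X$. Concretely, I would take the $96$ smooth rational curves of degree $5$ with respect to $h_2$ from Theorem~\ref{thm:main}~(3) and show that the translation action of $A[2]=(\Z/2\Z)^4$ on them has exactly six orbits, each of size $16$ and consisting of pairwise disjoint curves, so that $96=6\times16$; the identity $\tfrac12\sum_{R\in\SSS}R\in S_X$ for each orbit $\SSS$ then exhibits it as a ``Kummer family''. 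These six orbits are the sets $\SSS_{00},\dots,\SSS_{12}$, and I expect the two $\Aut(X,h_2)$-orbits on the $96$ curves to be $\SSS_{00}\cup\SSS_{01}\cup\SSS_{02}$ and $\SSS_{10}\cup\SSS_{11}\cup\SSS_{12}$, within each of which the factor $\Z/3\Z$ of $\Aut(X,h_2)$ permutes the three members cyclically. Alternatively, this step can be carried out inside $S_X$ alone: classify, among the $96$ degree-$5$ classes, the $16$-element subsets of mutually orthogonal $(-2)$-classes whose half-sum lies in $S_X$, and verify that there are exactly six.

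It then remains to determine, for each of the $\binom{6}{2}=15$ unordered pairs of sets, the $16\times16$ matrix of mutual intersection numbers; by the transitivity just described this reduces to a handful of representative computations. For the pairs in part~(a) I would show that $\SSS_{\nu i}$ and $\SSS_{\nu j}$ play the roles of the nodes and the tropes of a common Kummer structure on $X$ --- it suffices to verify this for the single pair $(\SSS_{00},\SSS_{01})$ and then apply the cyclic automorphism of order $3$ --- whence the incidence numbers lie in $\{0,1\}$ and every member of one set meets exactly $6$ of the other, i.e.\ a $(16_6)$-configuration. For the pairs $(\SSS_{0i},\SSS_{1i})$ in part~(b) I would use the $2$-isogeny relating the two underlying abelian surfaces, together with the Weil pairing on $A[2]$, to show that the curves still meet transversally and that every member of one set meets exactly $12$ of the other; and likewise that the pairs in part~(c) give exactly $4$. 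A built-in consistency check is that a fixed curve then meets $0+6+6+12+4+4=32$ of the other $80$ curves, each with multiplicity $1$, and that the $96$ classes together with $h_2$ satisfy the relations forced on them in $S_X$ --- for instance $h_2\cdot\sum_{R\in\SSS}R=80=\deg h_2$ for each of the six sets.

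The step I expect to be the main obstacle is the bookkeeping in the last two parts: pinning down exactly which principal polarizations and $2$-isogenies of the superspecial abelian surface give rise to the six sets, and --- more seriously --- proving that the three incidence patterns are \emph{exactly} $(16_6)$, $(16_{12})$ and $(16_4)$ rather than merely checking them pair by pair. Through the explicit model $(X,h_2)$ the verification reduces to a finite, if lengthy, coordinate computation; the conceptual heart, namely reading the three incidence types off the arithmetic of $2$-isogenies of $E\times E$ and off the combinatorics of the $(16_6)$-configuration of a Kummer surface, is where the real effort lies.
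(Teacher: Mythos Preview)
Your Kummer-surface idea is indeed one of the routes the paper takes, but the plan as written has a genuine gap: you pass freely between the Kummer model $\Km(A)$ and the polarized model $(X,h_2)$, identifying the $A[2]$-translation action with the normal subgroup $(\Z/2\Z)^4$ of $\Aut(X,h_2)$ and the sixteen exceptional curves of $\Km(A)$ with sixteen of the degree-$5$ curves for $h_2$. No such identification is available. The paper produces $96$ curves both in $(X,h_2)$ (via the induced chamber $D_2$) and on $\Km(A)$ (via explicit parametrized curves), and states outright that it does \emph{not} know whether these two collections of $96$ curves are $\Aut(X)$-conjugate. So borrowing the curves from Theorem~\ref{thm:main}\,(3) and then decomposing them under the Kummer $A[2]$ is not justified.

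There is also a structural problem with your orbit picture. In the paper's Kummer construction only $\SSS_{01}$ and $\SSS_{02}$ are single $A_2$-translation orbits --- of symmetric genus-$2$ curves, which do give the $(16_6)$ with $\SSS_{00}$ exactly by the theta/trope mechanism you anticipate. But $\SSS_{10},\SSS_{11},\SSS_{12}$ are each assembled from \emph{four} $A_2$-orbits of size $4$ (together with the involution $(P,Q)\mapsto(Q,-P)$), and $\SSS_{10}$ in particular requires images of hyperelliptic curves of genus $5$, which you would not locate by scanning principal polarizations and $2$-isogenies. The three incidence types are then checked by a direct intersection computation in the N\'eron-Severi lattice of the blow-up of $A$, not via the Weil pairing. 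The paper's cleanest proof is actually the Leech-lattice one: under a suitable primitive embedding $S_X\hookrightarrow U\oplus\Lambda$ the $96$ curves become Leech roots indexed by Golay octads $K$ with prescribed incidence to a fixed octad $K_0$ and a fixed tetrad $\{\infty,0,1,2\}\subset K_0$; the six sets are cut out by which of $0,1,2$ lie in $K$, and the $(16_6)$, $(16_{12})$, $(16_4)$ patterns drop out of the fact that two distinct octads meet in $0$, $2$ or $4$ points. That argument is closer in spirit to your ``work inside $S_X$ alone'' fallback than to the abelian-surface bookkeeping you outline.
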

In fact, the set of the $96$ smooth rational curves  of degree $5$ on $(X, h_2)$
decomposes into the disjoint union of six sets 
 with the properties (a), (b), (c).
 
Since $h_2^2=80$, however,
it is difficult to present these curves explicitly.
Instead, we construct the six sets with the properties (a), (b), (c)
on the Kummer surface model of $X$.
Let $E$ be the elliptic curve defined by $y^2=x^3-1$,
and let $A$ be the product abelian surface $E\times E$.
It is well-known that $X$ is isomorphic to the Kummer surface $\Km(A)$
associated with $A$.
In~Section~\ref{sec:Kum},
we construct these six sets  explicitly on $\Km(A)$
by giving the pull-back of rational curves by the rational map $A\cdot\cdot\to \Km(A)$.  
As a corollary of this construction, we have the following result.
Let $\P^1$ be a projective line over $\F_{25}$ with an affine parameter.
We define four subsets of $\P^1(\F_{25})$ by the following:
\begin{eqnarray*}
P_6 & =&\{\infty,0,1,2,3,4\},\\
P_4 & =&\{\sqrt {2}, \; 1+2\,\sqrt {2}, \;  3+3\,\sqrt {2}, \;  4+4\,\sqrt {2} \} ,\\
\bar{P}_4 & =&\{4\,\sqrt {2}, \;  1+3\,\sqrt {2}, \;  3+2\,\sqrt {2}, \;  4+\sqrt{2}  \},\\
P_{12}& =&\P^1(\F_{25})\setminus  (P_6 \cup  P_4 \cup  \bar{P}_4).
\end{eqnarray*}
They are mutually disjoint. See Remark~\ref{rem:psis} for the geometric characterization of 
the decomposition $\P^1(\F_{25})=P_6 \cup P_4 \cup \bar{P}_4 \cup P_{12}$.
\begin{theorem}\label{cor:six}
There exists a model of $\Km(A)$ defined over $\F_{25}$,
and a set $\SSS$ of the $96$ rational curves defined over $\F_{25}$ on $\Km(A)$
that admits a decomposition into  disjoint six subsets
$\SSS_{\nu i}$ $(\nu=0,1$ and $i=0,1,2)$
satisfying $(a), (b), (c)$ of Theorem~\ref{thm:six}.
Moreover, 
any intersection point of two curves in $\SSS$ is an $\F_{25}$-rational point, 
and, 
 for each $\varGamma$ in $\SSS_{\nu i}$,
the set $\varGamma(\F_{25})$ of $\F_{25}$-rational points on $\varGamma$  are decomposed
into the union of
disjoint four sets $\varGamma_{\nu}$, $\varGamma_{\mu i}$, $\varGamma_{\mu j}$ and $\varGamma_{\mu k}$ $($$\mu \ne\nu$, 
and  $j\ne k\ne i\ne j)$
with the following properties.
\begin{itemize}
\item[(i)] $| \varGamma_{\nu} | = 6$, $|\varGamma_{\mu i}|=12$, $|\varGamma_{\mu j}| = |\varGamma_{\mu k}|=4$.
\item[(ii)] For any point $p$ in $\varGamma_{\nu}$ and each $i\sprime \ne i$, there exists exactly one curve in $\SSS_{\nu i\sprime}$ passing through $p$.  For any point $p\sprime$ in $\varGamma_{\mu i}$,
there exists exactly 
one curve in $\SSS_{\mu i}$  passing through $p\sprime$.  
For any point $p\spprime$
in $\varGamma_{\mu j}$ $($resp.~$\varGamma_{\mu k})$,
 there exists exactly one curve in $\SSS_{\mu j}$ 
(resp.~$\SSS_{\mu k}$) passing through $p\spprime$.
\item[(iii)] 
There exists an isomorphism 
$\varphi: \varGamma\isom \P^1$
defined over $\F_{25}$
such that $\varphi\inv(P_6)=\varGamma_{\nu}$, $\varphi\inv(P_{12})=\varGamma_{\mu i}$, 
$\varphi\inv(P_4)=\varGamma_{\mu j}$ and 
$\varphi\inv(\bar{P}_4)=\varGamma_{\mu k}$.
\end{itemize}
\end{theorem}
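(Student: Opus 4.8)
The plan is to carry out explicitly the construction announced just above the statement, starting from the superspecial abelian surface $A=E\times E$ with $E\colon y^2=x^3-1$. Since $5\equiv 2\pmod 3$ the curve $E$ is supersingular, and a point count gives $|E(\F_5)|=6$, so the $\F_{25}$-Frobenius of $E$ acts as $[-5]$; hence $\mathrm{End}_{\F_{25}}(E)=\mathrm{End}_{\overline{\F}_5}(E)$ is a maximal order in the quaternion algebra ramified exactly at $5$ and $\infty$, and $A[2]$, the abelian subvarieties of $A$ isomorphic to $E$, and all their translates by $2$-torsion points are defined over $\F_{25}$ (one needs the primitive cube roots of unity, which lie in $\F_{25}$). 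First I fix an explicit model of $\Km(A)$ over $\F_{25}$ by blowing up the sixteen points of $A[2]$ and dividing by $[-1]$, producing the sixteen exceptional $(-2)$-curves $\mathcal E_\tau$ $(\tau\in A[2])$. Every symmetric curve of arithmetic genus one on $A$ is a coset $C=E'+\tau$ of an abelian subvariety $E'\cong E$ by some $\tau\in A[2]$; its image $\overline C$ is a smooth rational curve, the composite $E'\hookrightarrow A\cdot\cdot\to\Km(A)$ factors through an isomorphism $E'/\{\pm1\}\isom\overline C$, and $\overline C$ meets exactly the four curves $\mathcal E_\tau$ with $\tau\in C$. Thus $\overline C\cong\P^1$ carries a natural affine coordinate inherited from $E'\cong E$.

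Next I produce the six sets. Each $\SSS_{\nu i}$ is the image under $A\cdot\cdot\to\Km(A)$ of sixteen cosets $E'+\tau$, where the abelian subvarieties $E'$ that occur are the graphs $E_\phi=\{(P,\phi(P)):P\in E\}$ of an explicit finite list of $\phi\in\mathrm{End}(E)$ together with $0\times E$; the list is read off from the quaternionic arithmetic of $\mathrm{End}(E)$ so that the ninety-six images are pairwise distinct, that the sixteen cosets inside each block are mutually disjoint on $A$ (or meet only along $A[2]$ and are separated by the resolution), and so that $\Aut(X,h_2)\cong(\Z/2\Z)^4\rtimes(\Z/3\Z\times\mathfrak{S}_4)$ permutes the six blocks as required by (a), (b), (c). The numerical assertions (a), (b), (c) are then reduced to intersection theory on $A$: for cosets $C=E_\phi+\tau$ and $C'=E_{\phi'}+\tau'$ with $\phi\ne\phi'$ the set $C\cap C'$ is a coset of $\ker(\phi-\phi')$, hence has $\deg(\phi-\phi')$ points, and $\overline C\cdot\overline{C'}$ on $\Km(A)$ equals the number of those points that are not $2$-torsion, since a transverse intersection at a $2$-torsion point is separated by the blow-up (and $0\times E$ is handled by the obvious variant). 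Running through the chosen subvarieties and their sixteen $2$-torsion translates, one checks that these intersection numbers are $0$ or $1$ and that the incidence patterns give a $(16_6)$-configuration between $\SSS_{\nu i}$ and $\SSS_{\nu j}$, a $(16_{12})$-configuration between $\SSS_{0i}$ and $\SSS_{1i}$, and a $(16_4)$-configuration between $\SSS_{0i}$ and $\SSS_{1j}$. Since all these cosets and all their intersection points on $A$ are $\F_{25}$-rational, the curves of $\SSS$ and their intersection points on $\Km(A)$ are too.

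For the refined statement I fix $\varGamma=\overline C\in\SSS_{\nu i}$ with $C=E'+\tau$, and use the coordinate on $\varGamma\cong E'/\{\pm1\}\cong\P^1$ above. By (a), $\varGamma$ meets six members of $\SSS_{\nu j}$ and six of $\SSS_{\nu k}$, where $\{i,j,k\}=\{0,1,2\}$; the computation on $A$ shows these twelve intersections lie at only six points of $\varGamma$, each on exactly one curve from each of the two blocks — a triple point respecting the $\Z/3$ permuting the same-$\nu$ blocks — which gives $|\varGamma_\nu|=6$ and the first part of (ii). Likewise the twelve intersections with $\SSS_{\mu i}$ $(\mu\ne\nu)$ lie at twelve distinct points $\varGamma_{\mu i}$, and the four with $\SSS_{\mu j}$, resp.\ $\SSS_{\mu k}$, at four points $\varGamma_{\mu j}$, resp.\ $\varGamma_{\mu k}$, with the incidences of (ii); this gives (i) and (ii). Computing the coordinates of these $6+12+4+4=26$ points, one finds they are exactly the $26$ points of $\P^1(\F_{25})$; taking $\varphi$ to be the unique coordinate change sending three suitable points of $\varGamma_\nu$ to $\infty,0,1$, one verifies by substitution that $\varphi(\varGamma_\nu)=P_6=\P^1(\F_5)$, $\varphi(\varGamma_{\mu i})=P_{12}$, and $\{\varphi(\varGamma_{\mu j}),\varphi(\varGamma_{\mu k})\}=\{P_4,\bar P_4\}$ — the two quadruples being interchanged by the Frobenius of $\F_{25}/\F_5$, in accordance with $\SSS_{\mu j}$ and $\SSS_{\mu k}$ being exchanged by a Frobenius-type symmetry. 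The same computation yields the geometric description of $\P^1(\F_{25})=P_6\cup P_4\cup\bar P_4\cup P_{12}$ promised in Remark~\ref{rem:psis}.

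The main obstacle is this last step: tracking one coset $E'+\tau$ through the blow-up-and-quotient resolution and pinning down the coordinates of all twenty-six special points on the resulting $\P^1$, and in particular telling the two Galois-conjugate quadruples apart as $P_4$ and $\bar P_4$ rather than only proving that the whole configuration is $\F_{25}$-rational. I expect this to be made tractable by the large automorphism group: $\Aut(X,h_2)$, together with a lift to $A$ of a suitable subgroup, acts transitively enough on the ninety-six curves and on the special points of a fixed $\varGamma$ that one need only verify a single representative curve and a few representative intersection points, the rest following by symmetry; the relations (a)--(c) then come down to a finite computation with $\mathrm{End}(E)/2\,\mathrm{End}(E)$ and the sixteen $2$-torsion translates.
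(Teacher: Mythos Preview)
Your plan has a genuine structural gap: you assume that all $96$ curves in $\SSS$ arise as images of \emph{elliptic} cosets $E'+\tau\subset A$ with $E'\cong E$ an abelian subvariety. The paper's construction does not do this, and in fact cannot be made to work this way. In the paper one takes $\SSS_{00}$ to be the set of sixteen exceptional curves of the resolution $Y\to A/\langle\iota_A\rangle$. A symmetric smooth curve $\varGamma\subset A$ of genus $g$ meets $A_2$ in exactly its $2g+2$ Weierstrass points, so $\varGamma_Y$ meets exactly $2g+2$ of the sixteen curves of $\SSS_{00}$. To obtain the $(16_6)$-configurations of (a) between $\SSS_{00}$ and $\SSS_{01},\SSS_{02}$ one is therefore forced to use genus-$2$ curves on $A$, and the $(16_{12})$-configuration of (b) between $\SSS_{00}$ and $\SSS_{10}$ forces genus-$5$ hyperelliptic curves. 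Elliptic cosets give only $(16_4)$ against $\SSS_{00}$; they account for $\SSS_{11}$ and $\SSS_{12}$ but not the rest. Concretely, the paper uses the curves $F:v^2=u^6-1$ (genus $2$) and $G:v^2=\sqrt{2}(u^{12}+2u^8+2u^4+1)$ (genus $5$) together with explicit morphisms $\phi_{F,2},\phi_{F,3},\phi_{G,3},\phi_{G,4}$ to $E$ to build $\SSS_{01},\SSS_{02},\SSS_{10}$.

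This is not merely a different packaging: the very statement of (iii) encodes the genera. By Remark~\ref{rem:psis} the subsets $P_6,P_4,\bar P_4,P_{12}\subset\P^1(\F_{25})$ are, up to $\F_{25}$-isomorphism, the branch loci of the double covers $F\to\P^1$, $E\to\P^1$, $E\to\P^1$, $G\to\P^1$. Thus the partition of $\varGamma(\F_{25})$ required in (iii) is exactly the trace on $\varGamma$ of curves coming from $F$, $E$, $E$, $G$, and there is no mechanism in an all-elliptic construction that would single out $P_6$ and $P_{12}$. A smaller but related error: your intersection formula $\overline C\cdot\overline{C'}=\#\bigl((C\cap C')\setminus A_2\bigr)$ is off by a factor of two; on $Y$ one has $\intM{\varGamma_Y,\varGamma'_Y}{S_Y}=\tfrac12\bigl(\intM{\varGamma,\varGamma'}{S_A}-|\varGamma\cap\varGamma'\cap A_2|\bigr)$, since $\pi$ has degree $2$. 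Once you bring in $F$ and $G$, the rest of your outline (compute classes in $S_{\tilA}$, compute $\F_{25}$-points via the explicit parametrizations, and verify (i)--(iii) curve by curve, reducing work by symmetry) matches the paper's proof of Theorem~\ref{thm:sec9main}.
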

We give three different proofs of the existence of the $96$ smooth 
rational curves mentioned in 
Theorem~\ref{thm:six}.  
We do not know whether such sets of $96$ curves
coincide under the action of the group of automorphisms of $X$.
\par
By using the Borcherds method~\cite{MR913200, MR1654763}, the groups of automorphisms of some $K3$ surfaces were calculated
(Kondo~\cite{MR1618132}, Keum~and~Kondo~\cite{MR1806732_2}, Dolgachev~and~Kondo~\cite{MR1935564K},  
Kondo and~Shimada~\cite{KondoShimadaS},
Ujigawa~\cite{UjikawaPreprint}).  
In all cases, the N\'eron-Severi lattice of each $K3$ surface is isomorphic to 
the orthogonal complement of a root lattice in $L$, where $L$ is an even unimodular lattice of signature $(1,25)$.
See Lemma~5.1 of~\cite{MR913200}, in which Borcherds gave 
a sufficient condition for the restrictions of standard fundamental domains of the reflection group of $L$ 
to the positive cone of the $K3$ surface 
to be conjugate to each other under the action of the orthogonal group of the N\'eron-Severi lattice.
Contrary to these cases, 
a new phenomenon occurs in the present case of the supersingular $K3$ surface in characteristic $5$ 
with Artin invariant $1$:
there exist at least three non-conjugate chambers obtained by the restriction of fundamental domains
(see also~Section~\ref{subsec:further}).
The projective models in
Theorem~\ref{thm:main} correspond to these three non-conjugate chambers.  
This phenomenon also happens in the case of the complex Fermat quartic surface.

\par
\medskip

The plan of this paper is as follows.  In Section 2, we recall some lattice theory which will be used in this paper.  
Section 3 is devoted to the explanation of the  Borcherds method for finding a finite polyhedron 
in the positive cone of a hyperbolic lattice primitively embedded into the even unimodular
lattice $L$ of signature $(1,25)$.  In Section 4, we apply this method to
the case of the supersingular $K3$ surface in characteristic $5$ with Artin invariant $1$.
In particular, by using computer, we give a proof of Theorems~\ref{thm:main}~and~\ref{thm:six}.
In Section 5, by using a geometry of Leech lattice, we give another proof of
Theorems~\ref{thm:main}~and~\ref{thm:six} without using computer.  
In Section 6, we recall some facts on the supersingular elliptic curve in characteristic $5$, and in Section 7 we investigate $\F_{p^2}$-rational points on the Kummer surface associated with the product of two supersingular elliptic curves.   Section 8 is devoted to give another proof of Theorem~\ref{thm:six} by using the Kummer surface structure of $X$.  
Moreover we study the intersection between the $96$ smooth rational curves and prove Theorem~\ref{cor:six}.

\par
\medskip
In Sections~\ref{sec:bycomputer} and~\ref{sec:Kum},
we use computer for the proof of main results.
The computational data  are presented in~\cite{ShimadaCompData}.


%
\section{Lattices}\label{sec:latiice}
A \emph{$\Q$-lattice}  is  a pair $(M,\intM{\cdot , \cdot}{M})$ of a free $\Z$-module  $M$ of  finite rank and 
a non-degenerate symmetric bilinear form $\intM{\cdot , \cdot}{M} : M \times M \to \Q$. 
We omit the bilinear form $\intM{\cdot , \cdot}{M}$ or the subscript $M$ in $\intM{\cdot , \cdot}{M}$
if no confusions will occur.
If $\intM{\cdot , \cdot}{}$ takes values in $\Z$,
$M$ is called a \emph{lattice}.
For $x \in M\otimes \R $, we call $x^2  =\intM{x,x}{}$ the \emph{norm} of $x$.
A lattice $M$ is \emph{even} if 
$x^2\in 2\Z$ holds
for any $x\in M$. 
\par
Let $M$ be a lattice of rank $r$.
The signature of $M$ is the signature of the real quadratic space $M\tensor\R$. 
We say that $M$ is \emph{negative definite} if $M\tensor\R$ is negative definite, 
and $M$ is \emph{hyperbolic}
if the signature is $(1, r-1)$.
A \emph{Gram matrix} of $M$ is an $r\times r$ matrix with entries $\intM{e_i, e_j}{}$,
where $\{e_1, \dots, e_r\}$ is a basis of $M$.
The determinant of a Gram matrix of $M$ is called the \emph{discriminant} of $M$.
\par
Let $M$ be an even lattice, and let $M\dual  ={\Hom}(M,\Z)$ 
be naturally identified with a submodule  of  $M\otimes \Q $ with the extended symmetric bilinear form.
We call this $\Q$-lattice $M\dual$ the \emph{dual lattice} of $M$.
The \emph{discriminant group} of $M$ is defined to be 
the quotient
$M\dual/M$, and is denoted by $A_M$.  
The order of $A_M$ is equal to the discriminant of $M$ up to sign.
A lattice $M$ is called \emph{unimodular} if $A_M$ is trivial,
while 
$M$ is called \emph{$p$-elementary} if $A_M$  is  $p$-elementary.  
\par
For an even lattice $M$, 
the \emph{discriminant quadratic form} of $M$ 
$$q_M : A_M \to \Q /2\Z$$ 
is defined
by $q_M(x\bmod M)  = x^2 \bmod 2\Z$.
\par
A submodule $N$ of $M$ is called \emph{primitive} if $M/N$ is torsion free.
A non-zero vector $v\in M$ is called \emph{primitive}
if the submodule of $M$ generated by  $v$ is primitive.

Let ${\OG}(M)$ be  the orthogonal group of a lattice $M$; that is, 
the group of isomorphisms of $M$ preserving $\intM{\cdot, \cdot}{}$.
We assume that ${\OG}(M)$ acts on $M$ from the \emph{right}, and 
the action of $g\in \OG(M)$ on $v\in M\tensor\R$ is denoted by $v\mapsto v^g$.
Similarly ${\OG}(q_M)$ denotes the group of isomorphisms of $A_M$ preserving $q_M$.
There is a natural homomorphism ${\OG}(M) \to {\OG}(q_M)$.
%
%

Let $M$ be a hyperbolic lattice.
A \emph{positive cone} of $M$ is one of the two connected components of the set
$$
\set{x\in M\tensor \R}{x^2>0}.
$$
Let $\PPP_{M}$ be a positive cone of $M$.
We denote by ${\OG}^+(M)$ the group of isometries of $M$ preserving  $\PPP_{M}$.
Then ${\OG}(M)={\OG}^+(M)\times\{\pm 1\}$.
For a vector $v\in M\tensor \R$ with $v^2<0$,  we define 
$$
(v)\sperp =\set{x\in \PPP_M}{\intM{x, v}{}=0},
$$
which is   a real hyperplane of $\PPP_M$.
An isometry $g\in \OG^+(M)$  is called a \emph{reflection with respect to $v$}
or a \emph{reflection into  $(v)\sperp$}
if $g$ is of order $2$ and fixes each point of $(v)\sperp$.
 For a lattice $M$,
 the set of $(-2)$-vectors is denoted by $\RRR_M$.
Any element $r$ of $\RRR_M$ defines a reflection
$$
s_r : x\mapsto x+\intM{x, r}{} r
$$
with respect to $r$.  We denote by $\Wgr (M)$ the group generated by  the set of reflections 
$\shortset{s_r}{r\in \RRR_M}$.
Since $s_r$ preserves $\PPP_M$, $\Wgr (M)$ is a subgroup of $\OG^+(M)$ . 
It is obvious that $\Wgr (M)$ is normal in $\OG^+(M)$.
\par
A negative definite even lattice $M$ is said to be a \emph{root lattice}
if $M$ is generated by $\RRR_M$.
%
%

%
\section{Borcherds method}\label{GBM}
In this section, we review the Borcherds method~\cite{MR913200, MR1654763},
and the algorithms in~\cite{ShimadaAlgoAut}.
\par
\medskip
We define some notions and fix some notation.
Let $M$ be an even hyperbolic lattice with a fixed positive cone $\PPP_M$.
Let   $\VVV$ be a set of vectors $v\in M\tensor\R$ with $v^2<0$.
Suppose that the family of hyperplanes 
$$
\VVV\sphyp =\set{(v)\sperp}{v\in \VVV}
$$
is locally finite in $\PPP_M$.
By a  \emph{$\VVV\sphyp$-chamber}, we mean a closure in $\PPP_M$ of a connected component of 
$$
\PPP_M\setminus \bigcup_{ v \in \VVV} (v)\sperp.
$$
Let $D$ be a \emph{$\VVV\sphyp$}-chamber.
A hyperplane $(v)\sperp$ is said to be a \emph{wall} of $D$ if 
$(v)\sperp$ is disjoint from the interior of $D$ and $(v)\sperp\cap D$ contains a non-empty open subset of $(v)\sperp$.
\par
Recall that $\RRR_M$ is the set of vectors $r\in M$ with $r^2=-2$.
Then each $\RRR_M\sphyp$-chamber is a fundamental domain of the action  of $\Wgr (M)$ on $\PPP_M$.
\subsection{Conway-Borcherds  theory}\label{subsec:Conway}
Let $L$ be an even unimodular hyperbolic lattice of rank $26$,
which is unique up to isomorphisms, and 
let $\PPP_L$ be a positive cone of $L$.
An $\RRR_L\sphyp$-chamber will be called a \emph{Conway chamber}.
A non-zero primitive vector $w\in L$ with $w^2=0$ is called a \emph{Weyl vector}
if $w$ is contained in the closure $\closure{\PPP}_L$ of $\PPP_L$ in 
$L\tensor \R$ and 
the even negative-definite unimodular lattice $\gen{w}\sperp/\gen{w}$ is isomorphic to the 
(negative-definite) Leech lattice 
(that is, $\gen{w}\sperp/\gen{w}$ contains no $(-2)$-vectors).
For a Weyl vector $w$,
we put 
\begin{equation}\label{eq:defDelta}
\Delta(w) =\set{r\in \RRR_L}{ \intM{r, w}{}=1}.
\end{equation}
Conway and Sloane~\cite{MR640949} and 
Conway~\cite{MR690711} proved the following:
\begin{theorem}
If $w$ is a Weyl vector, then
$$
\DDD(w) =\set{x\in \PPP_L}{\intM{r, x}{}\ge 0\;\;\textrm{for any}\;\; r\in \Delta(w)}
$$
is a Conway chamber,
and $\shortset{(r)\sperp}{ r\in \Delta(w)}$ is the set of walls of $\DDD(w)$.
For any Conway chamber $\DDD$, there exists a unique Weyl vector $w$ such that $\DDD=\DDD(w)$.
\end{theorem}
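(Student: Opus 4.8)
The plan is to reduce the statement to the classical description of the reflection group of the even unimodular lattice $\rmII_{1,25}$ in terms of the Leech lattice, following Conway~\cite{MR690711} and Conway--Sloane~\cite{MR640949}. First I would fix a Weyl vector $w$ and analyze the geometry of the affine space $w\sperp/\gen{w}$, which carries the structure of the negative-definite Leech lattice $\Lambda$ by hypothesis. The key point is that a $(-2)$-vector $r\in\RRR_L$ with $\intM{r,w}{}=1$ is determined, modulo $\gen w$, by its image $\bar r$ in $w\sperp/\gen w \cong \Lambda$: writing $r = \lambda + w + c\,\gen w$ with $\lambda \in \Lambda$ lifted appropriately, the condition $r^2 = -2$ pins down the $\gen w$-component once $\lambda$ and the pairing with $w$ are fixed. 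Thus $\Delta(w)$ is in bijection with $\Lambda$, and each wall $(r)\sperp$ of $\DDD(w)$ corresponds to a ``hole'' or lattice point of the Leech lattice. I would make this bijection explicit and check that the hyperplanes $\shortset{(r)\sperp}{r\in\Delta(w)}$ are locally finite in $\PPP_L$, so that $\DDD(w)$ is genuinely a $\RRR_L\sphyp$-chamber: this uses that $\Lambda$ has a positive minimal norm (no $(-2)$-vectors, i.e. no roots), so the reflection hyperplanes cannot accumulate.

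Next I would show that $\DDD(w)$ is actually a full fundamental domain for $\Wgr(L)$ acting on $\PPP_L$, i.e. a single Conway chamber rather than a union of several. The standard argument is to pick an interior point $x_0$ of the putative chamber (for instance a vector close to $w$ inside $\PPP_L$), and show that for any $r\in\RRR_L$ the hyperplane $(r)\sperp$ either bounds $\DDD(w)$ or is separated from $x_0$ by one of the walls in $\Delta(w)$; equivalently, one runs the reflection algorithm (Vinberg's algorithm started from $w$) and verifies that the walls it produces are exactly the $(r)\sperp$ with $\intM{r,w}{}=1$ and $\intM{r,x_0}{}>0$. The verification that no further roots appear is where the special arithmetic of $\Lambda$ enters: one shows that if $r\in\RRR_L$ satisfies $\intM{r,w}{}\ge 2$ then $(r)\sperp$ does not meet the interior of $\DDD(w)$, because the ``covering radius'' bound for the Leech lattice (the fact that $\Lambda$ has covering radius $\sqrt2$, Conway--Parker--Sloane) forces $r$ to lie on the far side of some $\Delta(w)$-wall. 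I expect this step — translating the covering radius / deep hole theory of the Leech lattice into the statement that $\Delta(w)$ already exhausts the walls — to be the main obstacle, and I would simply cite~\cite{MR640949, MR690711} for it rather than reprove it.

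Finally, for the converse — that every Conway chamber $\DDD$ equals $\DDD(w)$ for a \emph{unique} Weyl vector $w$ — I would argue as follows. Given a Conway chamber $\DDD$, its walls form a set of $(-2)$-vectors; by the transitivity of $\Wgr(L)$ on $\RRR_L\sphyp$-chambers and the fact that $\OG^+(L) = \Wgr(L)\rtimes\OG^+(L)_{\DDD}$ (with the second factor acting on $\DDD$), it suffices to produce one Weyl vector for one fixed chamber, which is exactly what the first part does. For uniqueness, suppose $\DDD(w) = \DDD(w')$; then $\Delta(w) = \Delta(w')$, and $w$ (resp. $w'$) is characterized inside $\closure\PPP_L$ as the unique isotropic vector with $\intM{r,w}{}=1$ for all $r$ in this common wall set — a system of linear equations whose solution in $L\tensor\R$ is one-dimensional, and primitivity plus the isotropy condition pin down $w$ on the nose. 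Assembling these pieces gives the theorem; throughout, the only inputs beyond linear algebra are the classification of even unimodular lattices in signature $(1,25)$ (uniqueness of $L$), the characterization of $\Lambda$ among negative-definite even unimodular rank-$24$ lattices as the one without roots, and the Leech covering-radius theorem.
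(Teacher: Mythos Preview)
The paper does not prove this theorem at all: it is stated as a result of Conway--Sloane~\cite{MR640949} and Conway~\cite{MR690711}, with no proof or sketch provided beyond the citation. So there is no ``paper's own proof'' to compare against; you have gone well beyond what the paper does.

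That said, your outline is essentially the correct strategy from the original sources. The bijection $\Delta(w)\leftrightarrow\Lambda$ is standard (and is in fact spelled out later in Section~\ref{sec:bylattice} of the paper, where Leech roots are written as $(-1-\langle\lambda,\lambda\rangle/2,\,1,\,\lambda)$ under a fixed splitting $L=U\oplus\Lambda$). The crucial step---that the simple roots for the Weyl vector $w$ are exactly those with $\intM{r,w}{}=1$, equivalently that the covering radius of $\Lambda$ is $\sqrt{2}$---is precisely the content of~\cite{MR640949}, and you are right both to identify it as the heart of the matter and to cite rather than reprove it. Your uniqueness argument is also fine: once $\Delta(w)=\Delta(w')$, the linear conditions $\intM{r,w}{}=1$ for all $r\in\Delta(w)$ together with $w^2=0$ and primitivity determine $w$. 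One small point: for existence of a Weyl vector for an arbitrary chamber $\DDD$, you invoke transitivity of $\Wgr(L)$ on chambers, which is immediate since $\Wgr(L)$ acts simply transitively on the set of $\RRR_L\sphyp$-chambers by the general theory of reflection groups; this is cleaner than the semidirect product decomposition you mention.
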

Let $S$ be an even hyperbolic  lattice of rank $<26$.
Suppose that $S$ is primitively embedded into $L$.
Let $\PPP_S$ be the positive cone of $S$ that is contained in $\PPP_L$.
Let $R$ denote the orthogonal complement of $S$ in $L$. 
For $x\in L\tensor \R$, we denote by
$$
x\mapsto x_S\quand x\mapsto x_R, 
$$
the projections to $S\tensor \R$ and $R\tensor \R$, respectively.
Note that, if $v\in L$, then $v_S\in S\dual$ and $v_R\in R\dual$.
We assume the following:
\begin{itemize}
\item[(i)]
The negative-definite lattice $R$ cannot be embedded into the Leech lattice.
(For example,   this condition is satisfied if $\RRR_R\ne \emptyset$.)
\item[(ii)]
The natural homomorphism  $\OG(R)\to \OG(q_R)$
 is surjective.
\end{itemize}
We put
$$
\RRR_{L|S} =\set{r_S}{r\in \RRR_L, \;\; \intM{r_S, r_S}{}<0}.
$$
It is easy to see that 
the family of hyperplanes $\RRR_{L|S}\sphyp$ is locally finite in $\PPP_S$.
A Conway chamber $\DDD$ is said to be \emph{$S$-nondegenerate} if $\DDD\cap \PPP_S$ contains 
a non-empty open subset of $\PPP_S$.
If $\DDD$ is an $S$-nondegenerate Conway chamber,
then $D =\DDD\cap \PPP_S$ is an $\RRR_{L|S}\sphyp$-chamber of $\PPP_S$,
which is called an \emph{induced chamber}.
Since $\PPP_L$ is tessellated by Conway chambers,
$\PPP_S$ is tessellated by induced chambers.
Since $\RRR_S$ is a subset of $\RRR_{L|S}$,
any $\RRR_S\sphyp$-chamber is a union of induced chambers.
We have the following. See~\cite{ShimadaAlgoAut}.
\begin{proposition}
{\rm (1)} Any  induced chamber has only a finite number of walls.

{\rm (2)} The automorphism group 
$\aut(D) =\shortset{g\in \OG^+(S)}{D^g=D}$ 
of an induced chamber $D$ is a finite group.
\end{proposition}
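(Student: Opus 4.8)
The plan is to deduce both statements from the Conway–Borcherds theorem together with the local structure of the tessellation of $\PPP_L$ by Conway chambers. Fix an $S$-nondegenerate Conway chamber $\DDD=\DDD(w)$ with induced chamber $D=\DDD\cap\PPP_S$, and pick an interior point $x_0$ of $D$ (which is then an interior point of $\DDD$ in $\PPP_L$). The key geometric observation is that, since $S$ has rank $<26$ and $R=S\sperp$ has rank $\ge 1$, a generic point $x_0\in D$ lies on no wall $(r)\sperp$ with $r\in\RRR_L$; the walls of $D$ come from those $r\in\Delta(w)$ (equivalently from vectors in $\RRR_{L|S}$) whose $S$-component $r_S$ cuts out a genuine hyperplane of $\PPP_S$ through a point of $D$. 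So the walls of $D$ are a subset of $\set{(r_S)\sperp}{r\in\Delta(w)}$, and to prove (1) it suffices to bound the size of the relevant subset of $\Delta(w)$ modulo the equivalence $r\sim r'\iff (r_S)\sperp=(r'_S)\sperp$.

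For (1), first I would note that $\Delta(w)$ itself is infinite, so one cannot argue naively. Instead, consider the segment from $x_0$ to a point $y$ on the closure of $\PPP_S$ representing a putative wall, or better: use that the walls of $D$ are detected locally near a generic boundary point. A wall $(r_S)\sperp$ of $D$ meets $D$ in a facet; choose a generic point $p$ in the relative interior of that facet. Then $p\in\PPP_S$ lies on $(r_S)\sperp$ but on no other hyperplane of $\RRR_{L|S}\sphyp$, hence the only walls of $\DDD$ through $p$ are those $(r')\sperp$ with $r'_S$ parallel to $r_S$ in $S\dual$. Now invoke the Conway theorem: $p$ lies in the closure of finitely many Conway chambers, and each Conway chamber has a unique Weyl vector; the chambers adjacent to $\DDD$ across walls through $p$ are finite in number because $\set{r'\in\Delta(w)}{p\in (r')\sperp}$ is finite — indeed $\intM{r',w}{}=1$ and $\intM{r',p}{}=0$ pin $r'$ into a negative-definite affine slice $\gen{w,p}\sperp$-translate, which contains only finitely many lattice vectors of norm $-2$. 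Running over a set of representatives of the (finitely many, by local finiteness of $\RRR_{L|S}\sphyp$ and the fact that $D$ is a locally finite polyhedron) wall-hyperplanes of $D$ gives that $D$ has finitely many walls. The main obstacle here is precisely this finiteness-of-walls argument: one must be careful that "locally finite" does not automatically yield "finitely many walls" for an unbounded polyhedron, so the real content is to show $D$ is actually a \emph{finite} polyhedron. This is where one uses crucially that $R$ embeds no copy of the Leech lattice (assumption (i)), which forces $D$ to be contained in no cusp of $\PPP_S$ with infinitely many walls — equivalently, every rational isotropic ray in $\closure{D}$ would give a Weyl vector $w'$ with $\gen{w'}\sperp/\gen{w'}$ Leech and $R\hookrightarrow$ Leech, contradiction. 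Thus $\closure{D}$ contains no isotropic boundary ray, $D$ has compact closure in $\PPP_S$ modulo scaling, and a polyhedron in hyperbolic space with compact closure has finitely many walls.

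For (2), the automorphism group $\aut(D)=\set{g\in\OG^+(S)}{D^g=D}$ acts on the finite set of walls of $D$, hence on the finite set of vertices (or on a fixed interior point obtained by averaging, which lies in the interior of $\PPP_S$ and has negative... rather, a point in the interior of $D$ obtained as the barycentre of the finitely many extreme rays, suitably normalised). An element of $\OG^+(S)$ fixing an interior point $x_1\in\PPP_S$ of $D$ acts on $S\tensor\R$ preserving $x_1$; since $x_1^2>0$, the orthogonal complement $\gen{x_1}\sperp$ in $S\tensor\R$ is negative definite, so the stabiliser of $x_1$ in $\OG^+(S)$ is a discrete subgroup of the compact group $\OG(\gen{x_1}\sperp)\cong\OG(\operatorname{rank}S-1,\R)$, hence finite. (Alternatively: $\aut(D)$ is a subgroup of the symmetric group on the finite set of walls that also preserves the lattice $S$, and a subgroup of $\OG^+(S)$ preserving a finite spanning set of vectors — the primitive wall vectors together with $x_1$ — is finite because it injects into a permutation group.) Therefore $\aut(D)$ is finite, completing (2). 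I would reference~\cite{ShimadaAlgoAut} for the precise algorithmic packaging of these arguments.
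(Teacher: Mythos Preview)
Your argument for (2) is fine: once (1) is known, averaging the primitive wall vectors (or any interior vector) over $\aut(D)$ gives a fixed vector $x_1$ with $x_1^2>0$, and the stabiliser of such a vector in $\OG^+(S)$ is finite because $x_1^\perp$ is negative definite.

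Your argument for (1), however, has a genuine gap in the step where you invoke assumption~(i). You claim that an isotropic $w'\in S$ lying in $\closure{D}\subset\closure{\DDD(w)}$ would be a Weyl vector, hence $R\hookrightarrow\gen{w'}\sperp/\gen{w'}\cong\Lambda$, contradicting~(i). But the only Weyl vector in $\closure{\DDD(w)}$ is $w$ itself: any other isotropic $u\in\closure{\DDD(w)}$ necessarily has roots in $\gen{u}\sperp/\gen{u}$ (otherwise $\DDD(u)$ and $\DDD(w)$ would be two Conway chambers sharing a free cusp, forcing $\DDD(u)=\DDD(w)$). So for $w'\in S$ with $w'\not\parallel w$ you get no contradiction from~(i), and indeed $D$ \emph{can} have cusps --- a finite-walled hyperbolic polyhedron need not be compact. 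The compactness route is simply the wrong target.

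The intended (and much more direct) use of assumption~(i) is this: it forces $w\notin S$, for if $w\in S$ then $R\subset\gen{w}\sperp$ and $R\hookrightarrow\gen{w}\sperp/\gen{w}\cong\Lambda$. Hence $w_R\ne 0$ and, since $R$ is negative definite, $w_S^2=-w_R^2>0$. Now for $r\in\Delta(w)$ with $r_S^2<0$ one has $r_R\in R\dual$ with $-2<r_R^2\le 0$, so only finitely many $r_R$; for each such $r_R$, the conditions $\intM{r_S,w_S}{}=1-\intM{r_R,w_R}{}$ and $r_S^2=-2-r_R^2$ cut out a bounded (hence finite) set in $S\dual$, because the affine hyperplane $\intM{\,\cdot\,,w_S}{}=\mathrm{const}$ has negative-definite direction $w_S^\perp$. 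This gives finitely many walls directly, without any appeal to compactness of $D$. (The paper itself does not spell this out but defers to~\cite{ShimadaAlgoAut}.)
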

In~\cite{ShimadaAlgoAut},
we have presented algorithms to calculate the set of walls and the automorphism group 
of an induced chamber.
Moreover, by an algorithm in~\cite{ShimadaAlgoAut},
if we have
\begin{itemize}
\item a Weyl vector $w\in L$ such that $\DDD(w)$ is $S$-nondegenerate,
and 
\item a wall $(v)\sperp$ of the induced chamber $D =\DDD(w)\cap \PPP_S$,
\end{itemize}
then we can calculate a Weyl vector $w\sprime \in L$ such that $D\sprime =\DDD(w\sprime)\cap \PPP_S$ is
the induced chamber adjacent to $D$ along the wall $(v)\sperp$.
\subsection{Periods and automorphisms  of supersingular $K3$ surfaces}\label{subsec:period}
Let $Y$ be a supersingular $K3$ surface defined over an algebraically closed field $k$ of odd characteristic $p$
with Artin invariant $\sigma$,
and let $S_Y$ denote the N\'eron-Severi lattice of $Y$.
Since $S_Y\dual/S_Y$ is $p$-elementary, we have  $pS_Y\dual \subset S_Y$.
Consider the $2\sigma$-dimensional $\F_p$-vector space
$$
S_0 =p S_Y\dual/ p S_Y \;\;\subset\;\; S_Y\tensor_{\Z}\F_p,
$$
on which we have an $\F_p$-valued quadratic form
$Q_0: S_0 \to \F_p$ defined by
$$
Q_0\;\;:\;\; px \bmod p{S_Y} \mapsto px^2 \bmod p \quad (x\in S_Y\dual).
$$
Let 
$\bar{c}_{\DR}: S_Y\tensor k \to H^2_{\DR}(Y)$ 
be the Chern class map.
Then $\Ker (\bar{c}_{\DR})$ is a $\sigma$-dimensional  isotropic subspace of $Q_0\tensor k$.
Let $\varphi : S_0\tensor k \to S_0\tensor k$ denote the map $\id\tensor F_k$,
where $F_k$ is the Frobenius of $k$.
\begin{definition}
The \emph{period} $\period_Y$ of $Y$ is defined to be $\varphi^*(\Ker (\bar{c}_{\DR}))$.
\end{definition}
Note that $\OG(S_Y)$ acts on $(S_0, Q_0)$ naturally.
We put 
 $$
 G_Y =\set{g\in \OG(S_Y)}{\period_Y^g=\period_Y}.
$$
We denote by $\PPP_{S_Y}$ the positive cone of $S_Y$ containing an ample class of $Y$.
Let $\Nef(Y)$ denote the intersection of $\PPP_{S_Y}$ with the nef cone of $Y$;
$$
\Nef(Y) =\set{x\in \PPP_{S_Y}}{\intM{x, C}{}\ge 0\;\;\textrm{for any curve $C$ on $Y$}\;\;}.
$$
We put
$$
 \aut(\Nef(Y)) =\set{g\in \OG^+(S_Y)}{\Nef(Y)^g=\Nef(Y)}.
$$
Thanks to  the Torelli theorem for supersingular $K3$ surfaces in odd characteristics due to Ogus~\cite{MR563467, MR717616},
we see that the natural action of $\Aut(Y)$ on $S_Y$ identifies $\Aut(Y)$ with
$$
\aut(\Nef(Y))\cap G_Y.
$$
\par
\medskip
Now suppose that $S_Y$ is embedded into $L$
in such a way that the conditions (i) and (ii) in Section~\ref{subsec:Conway}
are satisfied
and that the image of $\Nef(Y)$ is contained in $\PPP_L$.
It is well-known that  $\Nef(Y)$ is an $\RRR_{S_Y}\sphyp$-chamber in $\PPP_{S_Y}$.
(See, for example, Rudakov and Shafarevich~\cite{MR633161}.) 
Hence $\Nef(Y)$ is tessellated by induced chambers.
For an induced chamber $D$ contained in $\Nef(Y)$, we put
$$
\aut_{Y}(D) =\aut(D)\cap G_Y.
$$
Then $\aut_{Y}(D)$ is a finite subgroup of $\Aut(Y)=\aut(\Nef(Y))\cap G_Y$.
More precisely,
if $v\in D\cap S_Y$ is a vector in the interior of $D$,
then
$$
h_D =\sum_{g\in \aut_{Y}(D)} v^g
$$
is an ample class,
and $\aut_{Y}(D)$ is the  automorphism group $\Aut(Y, h_D)$ 
of the polarized $K3$ surface $(Y, h_D)$.
We have an algorithm
to make the complete list of elements of $\aut(D)$.
Hence, in order to calculate $\Aut(Y, h_D)$,
all we have  to do is to calculate the action of $\OG(S_Y)$ on the period $\KKK_Y$.
\par
\medskip
We say that two induced chambers $D$ and $D\sprime$ are  \emph{$G_Y$-congruent} 
if there exists $g\in G_Y$
such that $D^g=D\sprime$.
The number of $G_Y$-congruence classes is finite. 
If we obtain the list of all $G_Y$-congruence classes,
we can  determine the automorphism group of $Y$.
(As is explained in Introduction,
in the previous works of computing automorphism groups of $K3$ surfaces
using this technique, 
 there exists only one $O^+(S_Y)$-congruence class.)
See~\cite{ShimadaAlgoAut} and Section~\ref{subsec:further}. 

%
%
%
%
\section{Proof of Theorems by computer }\label{sec:bycomputer}
In this section and the next, we  prove Theorems~\ref{thm:main} and~\ref{thm:six} by calculating
some induced chambers.  
In this section we give a proof based on the algorithm presented in~\cite{ShimadaAlgoAut}.
\subsection{The N\'eron-Severi lattice and the period of $X$}\label{sec:SX}
Using the projective model $(X, h_F)$,
we calculate the N\'eron-Severi lattice $S_X$  and the period $\period_X$ of $X$
explicitly.
\par
\medskip
As is explained in Introduction, 
the surface $X$ contains $252$ smooth rational curves $\varGamma$ such that
$\intM{\varGamma, h_F}{}=1$.
We call these smooth rational curves \emph{$h_F$-lines}.
\begin{table}
{\small
$$
\begin{array}{lll}
 P_{1}:=[0: 1: 2] & 
 P_{2}:=[0: 1: 3] & 
 P_{3}:=[0: 1: 1+\sqrttwo]\\
 P_{4}:=[0: 1: 4+\sqrttwo] & 
 P_{5}:=[0: 1: 1+4\sqrttwo] & 
 P_{6}:=[0: 1: 4+4\sqrttwo]\\
 P_{7}:=[1: 0: 2] & 
 P_{8}:=[1: 0: 3] & 
 P_{9}:=[1: 0: 1+\sqrttwo]\\
 P_{10}:=[1: 0: 4+\sqrttwo] & 
 P_{11}:=[1: 0: 1+4\sqrttwo] & 
 P_{12}:=[1: 0: 4+4\sqrttwo]\\
 P_{13}:=[1: 1: \sqrttwo] & 
 P_{14}:=[1: 1: 1+2\sqrttwo] & 
 P_{15}:=[1: 1: 4+2\sqrttwo]\\
 P_{16}:=[1: 1: 1+3\sqrttwo] & 
 P_{17}:=[1: 1: 4+3\sqrttwo] & 
 P_{18}:=[1: 1: 4\sqrttwo]\\
 P_{19}:=[1: 2: 0] & 
 P_{20}:=[1: 3: 0] & 
 P_{21}:=[1: 4: \sqrttwo]\\
 P_{22}:=[1: 4: 1+2\sqrttwo] & 
 P_{23}:=[1: 4: 4+2\sqrttwo] & 
 P_{24}:=[1: 4: 1+3\sqrttwo]\\
 P_{25}:=[1: 4: 4+3\sqrttwo] & 
 P_{26}:=[1: 4: 4\sqrttwo] & 
 P_{27}:=[1: \sqrttwo: 1]\\
 P_{28}:=[1: \sqrttwo: 4] & 
 P_{29}:=[1: \sqrttwo: 2+2\sqrttwo] & 
 P_{30}:=[1: \sqrttwo: 3+2\sqrttwo]\\
 P_{31}:=[1: \sqrttwo: 2+3\sqrttwo] & 
 P_{32}:=[1: \sqrttwo: 3+3\sqrttwo] & 
 P_{33}:=[1: 1+\sqrttwo: 0]\\
 P_{34}:=[1: 2+\sqrttwo: 2+\sqrttwo] & 
 P_{35}:=[1: 2+\sqrttwo: 3+\sqrttwo] & 
 P_{36}:=[1: 2+\sqrttwo: 2\sqrttwo]\\
 P_{37}:=[1: 2+\sqrttwo: 3\sqrttwo] & 
 P_{38}:=[1: 2+\sqrttwo: 2+4\sqrttwo] & 
 P_{39}:=[1: 2+\sqrttwo: 3+4\sqrttwo]\\
\dots &\dots& \dots \\
\dots &\dots& \dots \\
 P_{124}:=[1: 3+4\sqrttwo: 2+4\sqrttwo] & 
 P_{125}:=[1: 3+4\sqrttwo: 3+4\sqrttwo] & 
 P_{126}:=[1: 4+4\sqrttwo: 0]\\
\end{array}
$$

}
\caption{$\F_{25}$-rational points on $C_F$}\label{table:F25FS}
\end{table}
The $h_F$-lines are labelled as follows.
Let $\pi_F: X\to \P^2$ denote the double covering.
Part of 
the $\F_{25}$-rational points 
$P_1, \dots, P_{126}$ on the Fermat curve $C_F$ of degree $6$ are given explicitly in Table~\ref{table:F25FS}.
Let $l_i$ be the line on $\P^2$ tangent to $C_F$ at $P_i$.
We put
$$
l_1^+ =\{w=x^3, y=3z\}\subset X,
$$
which is an irreducible component of $\pi_F^*(l_1)$,
and let $l_1^-$ denote the other irreducible component.
For $i>1$, let $l_i^+$ be the irreducible component of $\pi_F^*(l_i)$
such that $\intM{[l_1^+], [l_i^+]}{}=1$,
and let $l_i^-$ be the other irreducible component.
Consider the following twenty-two $h_F$-lines.
\begin{eqnarray*}
&&
\ell_{1} =l_{1}^+, \;\;\ell_{2} =l_{1}^-, \;\;\ell_{3} =l_{2}^+, \;\;\ell_{4} =l_{3}^+, \;\;\ell_{5} =l_{4}^+, \;\;\ell_{6} =l_{5}^+, \;\;\ell_{7} =l_{7}^+, \;\;\ell_{8} =l_{8}^+, \;\;\\
&&
\ell_{9} =l_{9}^+, \;\;\ell_{10} =l_{10}^+, \;\;\ell_{11} =l_{13}^+, \;\;\ell_{12} =l_{14}^+, \;\;\ell_{13} =l_{15}^+, \;\;\ell_{14} =l_{16}^+, \;\;\ell_{15} =l_{17}^+, \;\;\\
&&
\ell_{16} =l_{19}^+, \;\;\ell_{17} =l_{21}^+, \;\;\ell_{18} =l_{22}^+, \;\;\ell_{19} =l_{24}^+, \;\;\ell_{20} =l_{25}^+, \;\;\ell_{21} =l_{27}^+, \;\;\ell_{22} =l_{34}^+.
\end{eqnarray*}
Their intersection matrix is of determinant $-25$.
Hence the classes of these $h_F$-lines form a basis of $S_X$.
The Gram matrix $\Gram_{S}$ of $S_X$ with respect to this basis 
 $[\ell_1], \dots, [\ell_{22}]$
 is given in Table~\ref{table:GramNS}.
\begin{table}
%
%
{\small
$$
\left[ \begin {array}{cccccccccccccccccccccc}
-2\shrnk &3\shrnk &1\shrnk &1\shrnk &1\shrnk &1\shrnk &1\shrnk &1\shrnk &1\shrnk &1\shrnk &1\shrnk &1\shrnk &1\shrnk &1\shrnk &1\shrnk &1\shrnk &1\shrnk &1\shrnk &1\shrnk &1\shrnk &1\shrnk &1\\
\nalmed
3\shrnk &-2\shrnk &0\shrnk &0\shrnk &0\shrnk &0\shrnk &0\shrnk &0\shrnk &0\shrnk &0\shrnk &0\shrnk &0\shrnk &0\shrnk &0\shrnk &0\shrnk &0\shrnk &0\shrnk &0\shrnk &0\shrnk &0\shrnk &0\shrnk &0\\
\nalmed
1\shrnk &0\shrnk &-2\shrnk &1\shrnk &1\shrnk &1\shrnk &0\shrnk &0\shrnk &0\shrnk &0\shrnk &0\shrnk &1\shrnk &1\shrnk &1\shrnk &1\shrnk &0\shrnk &0\shrnk &1\shrnk &1\shrnk &1\shrnk &0\shrnk &0\\
\nalmed
1\shrnk &0\shrnk &1\shrnk &-2\shrnk &1\shrnk &1\shrnk &0\shrnk &0\shrnk &0\shrnk &0\shrnk &1\shrnk &0\shrnk &1\shrnk &0\shrnk &0\shrnk &0\shrnk &0\shrnk &0\shrnk &1\shrnk &0\shrnk &1\shrnk &1\\
\nalmed
1\shrnk &0\shrnk &1\shrnk &1\shrnk &-2\shrnk &1\shrnk &1\shrnk &1\shrnk &1\shrnk &1\shrnk &0\shrnk &1\shrnk &0\shrnk &0\shrnk &0\shrnk &1\shrnk &1\shrnk &0\shrnk &0\shrnk &1\shrnk &0\shrnk &0\\
\nalmed
1\shrnk &0\shrnk &1\shrnk &1\shrnk &1\shrnk &-2\shrnk &0\shrnk &0\shrnk &0\shrnk &0\shrnk &0\shrnk &0\shrnk &0\shrnk &0\shrnk &1\shrnk &0\shrnk &1\shrnk &1\shrnk &0\shrnk &0\shrnk &0\shrnk &1\\
\nalmed
1\shrnk &0\shrnk &0\shrnk &0\shrnk &1\shrnk &0\shrnk &-2\shrnk &0\shrnk &0\shrnk &1\shrnk &0\shrnk &0\shrnk &0\shrnk &0\shrnk &0\shrnk &0\shrnk &0\shrnk &1\shrnk &1\shrnk &1\shrnk &1\shrnk &0\\
\nalmed
1\shrnk &0\shrnk &0\shrnk &0\shrnk &1\shrnk &0\shrnk &0\shrnk &-2\shrnk &1\shrnk &0\shrnk &0\shrnk &1\shrnk &1\shrnk &1\shrnk &1\shrnk &0\shrnk &0\shrnk &0\shrnk &0\shrnk &0\shrnk &1\shrnk &1\\
\nalmed
1\shrnk &0\shrnk &0\shrnk &0\shrnk &1\shrnk &0\shrnk &0\shrnk &1\shrnk &-2\shrnk &0\shrnk &1\shrnk &0\shrnk &1\shrnk &0\shrnk &0\shrnk &0\shrnk &1\shrnk &1\shrnk &1\shrnk &1\shrnk &0\shrnk &1\\
\nalmed
1\shrnk &0\shrnk &0\shrnk &0\shrnk &1\shrnk &0\shrnk &1\shrnk &0\shrnk &0\shrnk &-2\shrnk &1\shrnk &0\shrnk &1\shrnk &1\shrnk &1\shrnk &0\shrnk &1\shrnk &1\shrnk &0\shrnk &0\shrnk &0\shrnk &1\\
\nalmed
1\shrnk &0\shrnk &0\shrnk &1\shrnk &0\shrnk &0\shrnk &0\shrnk &0\shrnk &1\shrnk &1\shrnk &-2\shrnk &1\shrnk &0\shrnk &0\shrnk &1\shrnk &1\shrnk &0\shrnk &1\shrnk &1\shrnk &0\shrnk &0\shrnk &0\\
\nalmed
1\shrnk &0\shrnk &1\shrnk &0\shrnk &1\shrnk &0\shrnk &0\shrnk &1\shrnk &0\shrnk &0\shrnk &1\shrnk &-2\shrnk &0\shrnk &0\shrnk &1\shrnk &1\shrnk &0\shrnk &1\shrnk &1\shrnk &0\shrnk &1\shrnk &1\\
\nalmed
1\shrnk &0\shrnk &1\shrnk &1\shrnk &0\shrnk &0\shrnk &0\shrnk &1\shrnk &1\shrnk &1\shrnk &0\shrnk &0\shrnk &-2\shrnk &1\shrnk &0\shrnk &0\shrnk &1\shrnk &0\shrnk &0\shrnk &1\shrnk &0\shrnk &0\\
\nalmed
1\shrnk &0\shrnk &1\shrnk &0\shrnk &0\shrnk &0\shrnk &0\shrnk &1\shrnk &0\shrnk &1\shrnk &0\shrnk &0\shrnk &1\shrnk &-2\shrnk &0\shrnk &1\shrnk &0\shrnk &1\shrnk &1\shrnk &0\shrnk &0\shrnk &0\\
\nalmed
1\shrnk &0\shrnk &1\shrnk &0\shrnk &0\shrnk &1\shrnk &0\shrnk &1\shrnk &0\shrnk &1\shrnk &1\shrnk &1\shrnk &0\shrnk &0\shrnk &-2\shrnk &0\shrnk &1\shrnk &0\shrnk &0\shrnk &1\shrnk &0\shrnk &0\\
\nalmed
1\shrnk &0\shrnk &0\shrnk &0\shrnk &1\shrnk &0\shrnk &0\shrnk &0\shrnk &0\shrnk &0\shrnk &1\shrnk &1\shrnk &0\shrnk &1\shrnk &0\shrnk &-2\shrnk &1\shrnk &0\shrnk &0\shrnk &1\shrnk &0\shrnk &0\\
\nalmed
1\shrnk &0\shrnk &0\shrnk &0\shrnk &1\shrnk &1\shrnk &0\shrnk &0\shrnk &1\shrnk &1\shrnk &0\shrnk &0\shrnk &1\shrnk &0\shrnk &1\shrnk &1\shrnk &-2\shrnk &0\shrnk &1\shrnk &0\shrnk &1\shrnk &0\\
\nalmed
1\shrnk &0\shrnk &1\shrnk &0\shrnk &0\shrnk &1\shrnk &1\shrnk &0\shrnk &1\shrnk &1\shrnk &1\shrnk &1\shrnk &0\shrnk &1\shrnk &0\shrnk &0\shrnk &0\shrnk &-2\shrnk &0\shrnk &1\shrnk &1\shrnk &0\\
\nalmed
1\shrnk &0\shrnk &1\shrnk &1\shrnk &0\shrnk &0\shrnk &1\shrnk &0\shrnk &1\shrnk &0\shrnk &1\shrnk &1\shrnk &0\shrnk &1\shrnk &0\shrnk &0\shrnk &1\shrnk &0\shrnk &-2\shrnk &0\shrnk &0\shrnk &0\\
\nalmed
1\shrnk &0\shrnk &1\shrnk &0\shrnk &1\shrnk &0\shrnk &1\shrnk &0\shrnk &1\shrnk &0\shrnk &0\shrnk &0\shrnk &1\shrnk &0\shrnk &1\shrnk &1\shrnk &0\shrnk &1\shrnk &0\shrnk &-2\shrnk &1\shrnk &1\\
\nalmed
1\shrnk &0\shrnk &0\shrnk &1\shrnk &0\shrnk &0\shrnk &1\shrnk &1\shrnk &0\shrnk &0\shrnk &0\shrnk &1\shrnk &0\shrnk &0\shrnk &0\shrnk &0\shrnk &1\shrnk &1\shrnk &0\shrnk &1\shrnk &-2\shrnk &0\\
\nalmed
1\shrnk &0\shrnk &0\shrnk &1\shrnk &0\shrnk &1\shrnk &0\shrnk &1\shrnk &1\shrnk &1\shrnk &0\shrnk &1\shrnk &0\shrnk &0\shrnk &0\shrnk &0\shrnk &0\shrnk &0\shrnk &0\shrnk &1\shrnk &0\shrnk &-2\end {array} \right] 

$$
}
\caption{Gram matrix of $S_X$}\label{table:GramNS}
\end{table}
An element  of $S_X\tensor \R$ is usually written as a row vector 
$[x_1, \dots, x_{22}]$ with respect to
the basis   $[\ell_1], \dots, [\ell_{22}]$,
while 
when it is written with respect to 
the \emph{dual} basis $[\ell_1]\dual, \dots, [\ell_{22}]\dual$,
we use the notation
$[\xi_1, \dots, \xi_{22}]\dual$.
For example, we have
\begin{eqnarray*}
h_F&=&[1,1,0,0,0,0,0,0,0,0,0,0,0,0,0,0,0,0,0,0,0,0]\\
&=&[1,1,1,1,1,1,1,1,1,1,1,1,1,1,1,1,1,1,1,1,1,1]\dual,\\
{}[l_7^-]&=&[1, 1, 0, 0, 0, 0, -1, 0, 0, 0, 0, 0, 0, 0, 0, 0, 0, 0, 0, 0, 0, 0] \\
&=&[0, 1, 1, 1, 0, 1, 3, 1, 1, 0, 1, 1, 1, 1, 1, 1, 1, 0, 0, 0, 0, 1]\dual, \\
{}[l_{14}^+]&=&[0, 0, 0, 0, 0, 0, 0, 0, 0, 0, 0, 1, 0, 0, 0, 0, 0, 0, 0, 0, 0, 0]\\
&=&[1, 0, 1, 0, 1, 0, 0, 1, 0, 0, 1, -2, 0, 0, 1, 1, 0, 1, 1, 0, 1, 1]\dual.
\end{eqnarray*}
We let $\OG(S_X)$ act on $S_X$ from the \emph{right},
so that we have 
$$
\OG(S_X)=\set{g\in \GL_{22}(\Z)}{ g\cdot \Gram_{S}\cdot  {}^t g=\Gram_{S}}. 
$$
The substitution
$\sqrt{2}\mapsto -\sqrt{2}$
induces a permutation on the set of $h_F$-lines
preserving the intersection form,
and hence it induces an isometry of the lattice $S_X$,
which is given by 
 the right multiplication
of the matrix  in Table~\ref{table:Frob}.
The deck-transformation of $\pi_F: X\to \P^2$
also induces an isometry of $S_X$,
which is given by
\begin{equation}\label{eq:flip}
[\ell_1]\mapsto [\ell_2],
\quad
[\ell_2]\mapsto [\ell_1],
\quand
[\ell_i]\mapsto h_F-[\ell_i]\quad\textrm{for}\quad i>2.
\end{equation}
\begin{table}
{\small
$$
\left[\begin {array}{cccccccccccccccccccccc} 
1\shrnk &0\shrnk &0\shrnk &0\shrnk &0\shrnk &0\shrnk &0\shrnk &0\shrnk &0\shrnk &0\shrnk &0\shrnk &0\shrnk &0\shrnk &0\shrnk &0\shrnk &0\shrnk &0\shrnk &0\shrnk &0\shrnk &0\shrnk &0\shrnk &0\\
\nalmed
0\shrnk &1\shrnk &0\shrnk &0\shrnk &0\shrnk &0\shrnk &0\shrnk &0\shrnk &0\shrnk &0\shrnk &0\shrnk &0\shrnk &0\shrnk &0\shrnk &0\shrnk &0\shrnk &0\shrnk &0\shrnk &0\shrnk &0\shrnk &0\shrnk &0\\
\nalmed
0\shrnk &0\shrnk &1\shrnk &0\shrnk &0\shrnk &0\shrnk &0\shrnk &0\shrnk &0\shrnk &0\shrnk &0\shrnk &0\shrnk &0\shrnk &0\shrnk &0\shrnk &0\shrnk &0\shrnk &0\shrnk &0\shrnk &0\shrnk &0\shrnk &0\\
\nalmed
0\shrnk &0\shrnk &0\shrnk &0\shrnk &0\shrnk &1\shrnk &0\shrnk &0\shrnk &0\shrnk &0\shrnk &0\shrnk &0\shrnk &0\shrnk &0\shrnk &0\shrnk &0\shrnk &0\shrnk &0\shrnk &0\shrnk &0\shrnk &0\shrnk &0\\
\nalmed
2\shrnk &3\shrnk &-1\shrnk &-1\shrnk &-1\shrnk &-1\shrnk &0\shrnk &0\shrnk &0\shrnk &0\shrnk &0\shrnk &0\shrnk &0\shrnk &0\shrnk &0\shrnk &0\shrnk &0\shrnk &0\shrnk &0\shrnk &0\shrnk &0\shrnk &0\\
\nalmed
0\shrnk &0\shrnk &0\shrnk &1\shrnk &0\shrnk &0\shrnk &0\shrnk &0\shrnk &0\shrnk &0\shrnk &0\shrnk &0\shrnk &0\shrnk &0\shrnk &0\shrnk &0\shrnk &0\shrnk &0\shrnk &0\shrnk &0\shrnk &0\shrnk &0\\
\nalmed
0\shrnk &0\shrnk &0\shrnk &0\shrnk &0\shrnk &0\shrnk &1\shrnk &0\shrnk &0\shrnk &0\shrnk &0\shrnk &0\shrnk &0\shrnk &0\shrnk &0\shrnk &0\shrnk &0\shrnk &0\shrnk &0\shrnk &0\shrnk &0\shrnk &0\\
\nalmed
0\shrnk &0\shrnk &0\shrnk &0\shrnk &0\shrnk &0\shrnk &0\shrnk &1\shrnk &0\shrnk &0\shrnk &0\shrnk &0\shrnk &0\shrnk &0\shrnk &0\shrnk &0\shrnk &0\shrnk &0\shrnk &0\shrnk &0\shrnk &0\shrnk &0\\
\nalmed
0\shrnk &0\shrnk &1\shrnk &1\shrnk &0\shrnk &1\shrnk &0\shrnk &-1\shrnk &-1\shrnk &0\shrnk &0\shrnk &0\shrnk &0\shrnk &0\shrnk &0\shrnk &0\shrnk &0\shrnk &0\shrnk &0\shrnk &0\shrnk &0\shrnk &0\\
\nalmed
0\shrnk &0\shrnk &1\shrnk &1\shrnk &0\shrnk &1\shrnk &-1\shrnk &0\shrnk &0\shrnk &-1\shrnk &0\shrnk &0\shrnk &0\shrnk &0\shrnk &0\shrnk &0\shrnk &0\shrnk &0\shrnk &0\shrnk &0\shrnk &0\shrnk &0\\
\nalmed
0\shrnk &0\shrnk &0\shrnk &0\shrnk &0\shrnk &0\shrnk &0\shrnk &0\shrnk &0\shrnk &0\shrnk &1\shrnk &1\shrnk &-1\shrnk &-1\shrnk &1\shrnk &0\shrnk &0\shrnk &0\shrnk &0\shrnk &0\shrnk &0\shrnk &0\\
\nalmed
0\shrnk &0\shrnk &0\shrnk &0\shrnk &0\shrnk &0\shrnk &0\shrnk &0\shrnk &0\shrnk &0\shrnk &0\shrnk &0\shrnk &0\shrnk &1\shrnk &0\shrnk &0\shrnk &0\shrnk &0\shrnk &0\shrnk &0\shrnk &0\shrnk &0\\
\nalmed
0\shrnk &0\shrnk &0\shrnk &0\shrnk &0\shrnk &0\shrnk &0\shrnk &0\shrnk &0\shrnk &0\shrnk &0\shrnk &0\shrnk &0\shrnk &0\shrnk &1\shrnk &0\shrnk &0\shrnk &0\shrnk &0\shrnk &0\shrnk &0\shrnk &0\\
\nalmed
0\shrnk &0\shrnk &0\shrnk &0\shrnk &0\shrnk &0\shrnk &0\shrnk &0\shrnk &0\shrnk &0\shrnk &0\shrnk &1\shrnk &0\shrnk &0\shrnk &0\shrnk &0\shrnk &0\shrnk &0\shrnk &0\shrnk &0\shrnk &0\shrnk &0\\
\nalmed
0\shrnk &0\shrnk &0\shrnk &0\shrnk &0\shrnk &0\shrnk &0\shrnk &0\shrnk &0\shrnk &0\shrnk &0\shrnk &0\shrnk &1\shrnk &0\shrnk &0\shrnk &0\shrnk &0\shrnk &0\shrnk &0\shrnk &0\shrnk &0\shrnk &0\\
\nalmed
0\shrnk &0\shrnk &0\shrnk &0\shrnk &0\shrnk &0\shrnk &0\shrnk &0\shrnk &0\shrnk &0\shrnk &0\shrnk &0\shrnk &0\shrnk &0\shrnk &0\shrnk &1\shrnk &0\shrnk &0\shrnk &0\shrnk &0\shrnk &0\shrnk &0\\
\nalmed
0\shrnk &0\shrnk &0\shrnk &0\shrnk &0\shrnk &0\shrnk &0\shrnk &0\shrnk &0\shrnk &0\shrnk &1\shrnk &1\shrnk &-1\shrnk &0\shrnk &0\shrnk &0\shrnk &0\shrnk &0\shrnk &1\shrnk &-1\shrnk &0\shrnk &0\\
\nalmed
0\shrnk &0\shrnk &0\shrnk &0\shrnk &0\shrnk &0\shrnk &0\shrnk &0\shrnk &0\shrnk &0\shrnk &0\shrnk &0\shrnk &0\shrnk &0\shrnk &0\shrnk &0\shrnk &0\shrnk &0\shrnk &1\shrnk &0\shrnk &0\shrnk &0\\
\nalmed
0\shrnk &0\shrnk &0\shrnk &0\shrnk &0\shrnk &0\shrnk &0\shrnk &0\shrnk &0\shrnk &0\shrnk &0\shrnk &0\shrnk &0\shrnk &0\shrnk &0\shrnk &0\shrnk &0\shrnk &1\shrnk &0\shrnk &0\shrnk &0\shrnk &0\\
\nalmed
0\shrnk &0\shrnk &0\shrnk &0\shrnk &0\shrnk &0\shrnk &0\shrnk &0\shrnk &0\shrnk &0\shrnk &1\shrnk &1\shrnk &-1\shrnk &0\shrnk &0\shrnk &0\shrnk &-1\shrnk &1\shrnk &0\shrnk &0\shrnk &0\shrnk &0\\
\nalmed
0\shrnk &0\shrnk &1\shrnk &1\shrnk &0\shrnk &1\shrnk &-2\shrnk &0\shrnk &0\shrnk &0\shrnk &0\shrnk &0\shrnk &1\shrnk &0\shrnk &1\shrnk &0\shrnk &1\shrnk &-1\shrnk &0\shrnk &-1\shrnk &-1\shrnk &0\\
\nalmed
0\shrnk &0\shrnk &0\shrnk &-1\shrnk &0\shrnk &-1\shrnk &1\shrnk &-1\shrnk &0\shrnk &0\shrnk &1\shrnk &1\shrnk &-1\shrnk &0\shrnk &0\shrnk &1\shrnk &0\shrnk &1\shrnk &1\shrnk &0\shrnk &0\shrnk &-1
\end {array} \right] 

$$}
\caption{Frobenius action on $S_X$}\label{table:Frob}
\end{table}
%
A smooth rational curve $Q$ on $X$ is said to be an \emph{$h_F$-conic} if $\intM{h_F, Q}{}=2$.
It is known that there exist exactly $6300$ $h_F$-conics on $X$.
See~\cite{MR3092762}.
\par
\medskip
Our next task is to calculate the period $\period_X$ of $X$ explicitly.
The discriminant group $A_S =S_X\dual/S_X$ of $S_X$ is isomorphic
to $\F_5^2$,
and is generated by
$$
\alpha_1 =[\ell_3]\dual \bmod S_X\quand 
\alpha_2 =[\ell_4]\dual \bmod S_X.
$$
With respect to the basis $\alpha_1, \alpha_2$,
 the discriminant form $q_S: A_S\to \Q/2\Z$ of $S_X$ is given by the matrix
$$
\left[ \begin {array}{cc} 2/5&0\\0&4/5
\end {array} \right]. 
$$
The automorphism group $\OG(q_S)$ of $(A_S, q_S)$ is of order $12$,
and,
by means of the basis $\alpha_1, \alpha_2$,
 each element of $\OG(q_S)$ is expressed as a right-multiplication of a $2\times 2$ matrix in $\GL_2(\F_5)$.
Consider the matrices
{\small
\begin{eqnarray*}
T_A& =&\phantom{{\vrule width 0pt height .5cm}^t }
\left[ \begin {array}{cccccccccccccccccccccc} 0&0&1&0&0&0&0&0&0&0&0&0
&0&0&0&0&0&0&0&0&0&0\\  0&0&0&1&0&0&0&0&0&0&0&0&0&0&0&0
&0&0&0&0&0&0\end {array} \right], \\
T_B& =&\phantom{{\vrule width 0pt height .7cm}}
{\vrule width 0pt height .5cm}^t 
\left[ \begin {array}{cccccccccccccccccccccc} 2&3&1&0&4&1&1&0&4&1&2&2
&4&4&0&0&1&4&1&0&0&0\\  3&2&0&1&4&2&4&3&4&1&2&4&2&1&3&0
&4&4&4&2&0&0\end {array} \right] 
\end{eqnarray*}}%
of size $2\times 22$ and $22\times 2$, respectively.
Then the action $\bar{g}\in \OG(q_S)$ on $(A_S, q_S)$ induced by an isometry $g\in \OG(S_X)$
is given by 
\begin{equation}\label{eq:barg}
\bar{g}=T_A\cdot \Gram_{S}\inv \cdot g\cdot \Gram_{S}\cdot T_B \bmod 5.
\end{equation}
%
%
Consider the $2$-dimensional $\F_5$-vector space
$$
S_0 =5 S_X\dual/ 5 S_X \;\;\subset\;\; S_X\tensor_{\Z}\F_5.
$$
The vector space $S_0$ has a basis 
$$
\tilde{\alpha}_1 =5 [\ell_3]\dual  \bmod 5S_X
\quand
\tilde{\alpha}_2 =5 [\ell_4]\dual\bmod 5S_X,
$$
with respect to which the $\F_5$-valued quadratic form $Q_0$ is given by the matrix
$$
\left[
\begin{array}{cc}
2 & 0 \\ 0 & 4
\end{array}
\right].
$$
Recall that  
$\bar{c}_{\DR}: S_X\tensor k \to H^2_{\DR}(X)$ 
is  the Chern class map.
Then $\Ker (\bar{c}_{\DR})$ is a $1$-dimensional  isotropic subspace of $Q_0\tensor k$.
Therefore we see that   $\Ker (\bar{c}_{\DR})$ is either equal to 
$\III_+ =\gen{(1, \sqrt{2})}$ or equal to $\III_- =\gen{(1, -\sqrt{2})}$.
Since  the Frobenius map $\varphi=\id\tensor F_k$ from $S_0\tensor k$ to itself
only interchanges $\III_+$ and $\III_-$, 
we conclude that the  period $\period_X=\varphi^*(\Ker (\bar{c}_{\DR}))$
of $X$
is either $\III_-$ or $\III_+$.
On the other hand, we have
\begin{equation*}\label{eq:same}
\set{\bar{g} \in \OG(Q_0)}{\III_+^{\bar{g}}=\III_+}=\set{\bar{g} \in \OG(Q_0)}{\III_-^{\bar{g}}=\III_-},
\end{equation*}
and this subgroup of $\OG(Q_0)$ is of order $6$
which consists of the following elements of $\GL_{2}(\F_5)$:
\begin{equation*}\label{eq:six}
\left[ \begin {array}{cc} 1&0\\  0&1\end {array}
 \right], \; 
 \left[ \begin {array}{cc} 2&1\\  3&2\end {array}
 \right], \; 
 \left[ \begin {array}{cc} 2&4\\  2&2\end {array}
 \right], \; 
 \left[ \begin {array}{cc} 3&1\\  3&3\end {array}
 \right], \;  
 \left[ \begin {array}{cc} 3&4\\  2&3\end {array}
 \right], \;  
 \left[ \begin {array}{cc} 4&0\\  0&4\end {array}
 \right].
\end{equation*}
Therefore,
for a given $g\in \OG(S_X)$,
we can determine whether
$\period_X^g=\period_X$ holds or not by 
calculating $\bar{g}$ by means of~\eqref{eq:barg}, and 
see whether $\bar{g}$ is one of the $6$ matrices above.
\par
For example, 
the Frobenius isometry given in Table~\ref{table:Frob}
does not preserve the period,
while the deck-transformation isometry~\eqref{eq:flip} does.
\subsection{Embedding $S_X$ into $L$}\label{subsec:embedding}
Let $\PPP_{S_X}$ be  the positive cone of $S_X$ 
containing an ample class of $X$.
We embed $S_X$ into the even unimodular hyperbolic lattice $L$ of rank $26$ primitively
in such a way that the conditions (i) and (ii) in Section~\ref{subsec:Conway}
are satisfied, and 
calculate some induced chambers contained in the $\RRR_{S_X}\sphyp$-chamber $\Nef(X)$.
%
\begin{proposition}\label{prop:emb}
{\rm (1)} There exists a primitive embedding $S_X\inj L$
such that the orthogonal complement $R$ of $S_X$ in $L$ satisfies the conditions {\rm (i)} and {\rm (ii)}
 in Section~\ref{subsec:Conway}.
 
{\rm (2)} If $\iota: S_X\inj L$ and $\iota\sprime: S_X\inj L$
are primitive embeddings,
then there exists $g\in \OG(L)$ such that $\iota\sprime=g\circ \iota$.
\end{proposition}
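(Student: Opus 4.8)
The plan is to deduce both statements from Nikulin's theory of discriminant forms, the only genuinely nontrivial input being two facts about the negative-definite rank-$4$ lattice $R$ that occurs as the orthogonal complement. Since $L$ is even unimodular of signature $(1,25)$ and $S_X$ is hyperbolic of signature $(1,21)$, every primitive embedding $S_X\inj L$ has orthogonal complement $R$ of signature $(0,4)$, and the overlattice $L\supset S_X\oplus R$ furnishes a canonical isomorphism $A_{S_X}\isom A_R$ under which the discriminant form $q_R$ corresponds to $-q_{S_X}$, with $q_{S_X}=\mathrm{diag}(2/5,4/5)$ as computed in Section~\ref{sec:SX}. Conversely, by Nikulin's correspondence, giving a primitive embedding $S_X\inj L$ amounts to giving an even lattice $R$ of signature $(0,4)$ with $q_R\isom -q_{S_X}$ together with the gluing data: $L$ is then the overlattice of $S_X\oplus R$ defined by the graph of an isomorphism $A_{S_X}\isom A_R$ anti-isometric for the discriminant forms, and since this overlattice has signature $(1,25)$ it is automatically the even unimodular lattice of that signature. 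Thus~(1) reduces to producing one such $R$ and checking conditions (i), (ii) of Section~\ref{subsec:Conway}.

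I would write $R$ down explicitly --- either by extracting it from an explicit primitive embedding built from the Gram matrix of Section~\ref{sec:SX}, or directly as a $4\times4$ negative-definite even Gram matrix with discriminant group $\F_5^2$ and discriminant form $\mathrm{diag}(-2/5,-4/5)$. Then I would exhibit a $(-2)$-vector in $R$, so that $\RRR_R\ne\emptyset$ and $R$ cannot be embedded into the root-free Leech lattice: this is condition (i). For condition (ii), note that $\OG(q_R)\cong\OG(q_{S_X})$ has order $12$ (Section~\ref{sec:SX}), so surjectivity of $\OG(R)\to\OG(q_R)$ is the finite task of exhibiting isometries of $R$ realizing a generating set of $\OG(q_R)$. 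Finally --- this is needed only for~(2) --- I would check, e.g.\ by a Kneser-neighbour computation or the Siegel mass formula, that the genus fixed by the signature $(0,4)$ and the discriminant form $-q_{S_X}$ consists of a single class.

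For~(2), take two primitive embeddings $\iota_1,\iota_2:S_X\inj L$ with orthogonal complements $R_1,R_2$ and gluing isomorphisms $\gamma_i:A_{S_X}\isom A_{R_i}$. An isometry $g\in\OG(L)$ with $\iota_2=g\circ\iota_1$ must restrict to $\iota_2\iota_1^{-1}$ on $\iota_1(S_X)$ (hence act trivially on $A_{S_X}$ via these identifications) and to some isometry $\psi:R_1\isom R_2$ on $\iota_1(S_X)^{\perp}$; compatibility with the gluing of $L$ then amounts exactly to requiring that $\psi$ induce the isomorphism $\gamma_2\gamma_1^{-1}:A_{R_1}\to A_{R_2}$, which one checks respects the discriminant forms, and conversely any such $\psi$ extends to a $g$. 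By the previous paragraph $R_1$ and $R_2$ lie in a one-class genus, hence are isometric; correcting an arbitrary isometry $R_1\isom R_2$ by an element of $\OG(R_1)$ mapping to the appropriate element of $\OG(q_{R_1})$ --- possible by~(ii) --- yields a $\psi$ inducing $\gamma_2\gamma_1^{-1}$, and hence the desired $g$. This is Nikulin's uniqueness criterion for primitive embeddings into unimodular lattices.

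The main obstacle is that $L$ and $S_X$ have the \emph{same} positive index ($1=1$), so $R$ is definite rather than indefinite, and the ``automatic'' existence and uniqueness statements available when the corank $\mathrm{rk}(L)-\mathrm{rk}(S_X)$ is large do not apply. The substance of the argument is therefore the definite-lattice bookkeeping of the second paragraph: getting hold of a suitable $R$, verifying $\RRR_R\ne\emptyset$ and the surjectivity $\OG(R)\twoheadrightarrow\OG(q_R)$, and checking the one-class property of the relevant genus. Once these are in place, both (1) and (2) follow formally from Nikulin's gluing correspondence.
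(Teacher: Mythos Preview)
Your proposal is correct and follows essentially the same approach as the paper. The paper likewise reduces both parts to Nikulin's gluing theory plus finite checks on the rank-$4$ negative-definite complement $R$: it cites Nipp's table of quaternary forms for the existence and uniqueness of $R$ (in place of your suggested mass-formula or neighbour computation), writes down an explicit Gram matrix for $R$ containing a visible $(-2)$-vector, and verifies condition~(ii) by directly computing $|\OG(R)|=72$ and checking that the natural map onto $\OG(q_R)$ (of order $12$) is surjective.
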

\begin{proof}
By Nipp's table of reduced regular primitive positive-definite quaternary quadratic forms~\cite{MR1118842},
there exists a  negative definite lattice $R$ of rank $4$ with discriminant $25$,
and $R$ is unique up to isomorphisms.
We can choose a  basis $u_1, \dots, u_4$ of $R$
 with respect to which  the Gram matrix is
equal to
\begin{equation}\label{eq:GramR}
\left[ \begin {array}{cccc} -2&-1&0&1\\-1&-2&-1&0
\\0&-1&-4&-2\\1&0&-2&-4
\end {array} \right].
\end{equation}
It is obvious that $\RRR_R$ is non-empty.
By a direct computation,
we see that the order of $\OG(R)$ is $72$,
and obtain the list of all elements of $\OG(R)$.
\par
The discriminant group $A_R =R\dual/R$ of $R$ is isomorphic
to $\F_5^2$,
and is generated by
$$
\beta_1 =u_4\dual \bmod R
\quand
\beta_2 =u_2\dual \bmod R,
$$
with respect to which
the discriminant form $q_R: A_R\to \Q/2\Z$ of $R$ is given by the matrix
$$
\left[ \begin {array}{cc} 8/5&0\\0&6/5
\end {array} \right].
$$
Hence the order of $\OG(q_R)$ is $12$.
We can check by direct computation that  the natural homomorphism $\OG(R)\to \OG(q_R)$ is surjective.
\par
Recall that $\alpha_1$ and $\alpha_2$ are the basis of $A_S =S_X\dual/S_X \cong \F_5^2$ given in the previous subsection.
The linear map $\delta : A_S\to A_R$ defined by
$\delta(\alpha_1)=\beta_1$ and $\delta(\alpha_2)=\beta_2$
induces an isomorphism from $(A_S, q_S)$ to $(A_R, -q_R)$.
Consequently,
the pull-back $L$ of the graph
$$
\set{(x, \delta(x))}{x \in A_S}
$$
of $\delta$ by the natural projection $S_X\dual\oplus R\dual \to A_S\oplus A_R$
is an even unimodular hyperbolic lattice of rank  $26$,
into which $S_X$ and $R$ are primitively embedded.
(See Nikulin~\cite{MR525944}.)
\par
The uniqueness of primitive embeddings $S_X\inj L$
up to the action of $\OG(L)$ follows from the uniqueness 
of the even negative-definite lattice of rank $4$ with discriminant $25$ and 
the surjectivity of $\OG(R)\to \OG(q_R)$. (See Nikulin~\cite{MR525944}.)
\end{proof}
In the following,
we use the primitive embedding $S_X\inj L$
constructed in the proof of Proposition~\ref{prop:emb}.
Let $\PPP_L$ be the positive cone containing $\PPP_{S_X}$.
An element  of $L\tensor \R$ is written in the form of a vector
$[x_1, \dots, x_{26}]\dual$
with respect to the basis $[\ell_1]\dual, \dots, [\ell_{22}]\dual$, $[u_1]\dual, \dots, [u_4]\dual$
of $S_X\dual \oplus R\dual$.
\par
\medskip
Let $w$ be a Weyl vector of $L$ such  that the corresponding Conway chamber $\DDD(w)$ is $S_X$-nondegenerate,
and let $D$ denote the chamber $\DDD(w)\cap \PPP_{S_X}$ of $\PPP_{S_X}$ induced by $\DDD(w)$.
We denote by $\Walls(D)$ the set of walls of $D$.
For a wall $W\in \Walls(D)$,
there exists a unique \emph{primitive} vector $v_W\in S_X\dual$
such that $W=(v_W)\sperp$ and $\intM{v_W,u}{}>0$,
where $u$ is a point in the interior of $D$.
A wall $W\in \Walls(D)$ is said to be \emph{of type $[a,n]$}
if  $\intM{v_W,w_S}{}=a$ and  $\intM{v_W,v_W}{}=n$,
where $w_S\in S_X\dual$ is the projection of the Weyl vector $w\in L$.
Suppose that $D$ is contained in the $\RRR_{S_X}\sphyp$-chamber $\Nef(X)$.
Then a wall  $W\in \Walls(D)$ of type $[a, n]$ is a wall  of $\Nef(X)$ if and only if
there exists an integer $c$ such that $ac=1$, $nc^2=-2$ and $c\, v_W\in S_X$.
\par
Let $D$ be an  induced chamber contained in $\Nef(X)$,
and
let  $h_D\in S_X$ be a vector contained in the interior of $D$ that is invariant under the action of 
$\aut(D)$.
Then $h_D$ is ample, and 
$$
\aut_{X}(D)=\aut(D)\cap G_X=\set{g\in \OG(S_X)}{D^g=D,\;\; \period_X^g=\period_X}
$$
is the automorphism group of the polarized $K3$ surface $(X, h_D)$.
%
\subsection{The induced chamber $D_0$}\label{subsec:D0}
We put
\begin{equation}\label{eq:Weyl-fermat}
w_0 =h_F+u_1 \in S_X\oplus R\subset L.
\end{equation}
Since $w_0$ is primitive in $L$,
$w_0$ belongs to $\closure{\PPP}_L$, and  $\gen{w_0}\sperp/\gen{w_0}$  contains no $(-2)$-vectors,
we see that $w_0$ is a Weyl vector.
We denote by $\pr_{S_X}$ the orthogonal-projection from $L\otimes \R$ to $S_{X}\otimes \R$.
Calculating the finite set 
$$
\pr_{S_X}(\Delta(w_0))\cap \RRR_{L|S} \;\;=\;\; \shortset{r_{S_X}}{r\in \Delta(w_0),\;\; \intM{r_{S_X}, r_{S_X}}{S_X}<0},
$$
we see that  $h_F=w_{0, S}$ belongs  in the interior of 
$$
D_0 =\DDD(w_0)\cap \PPP_{S_X}.
$$
Hence the Conway chamber $\DDD(w_0)$ is $S_X$-nondegenerate and $D_0$ is an induced chamber.
The order of $\aut_{X}(D_0)$ is $756000$,
and it coincides with the  automorphism group  of
the Fermat double sextic plane $(X, h_F)$.
The action of $\aut_{X}(D_0)=\Aut(X, h_F)$ 
decomposes the set $\Walls(D_0)$ of walls of $D_0$ into the union of three orbits $O_{0,0}$, $O_{0,1}$, $O_{0,2}$
described as follows:
$$
\begin{array}{c|cc}
\textrm{no.} & \textrm{type} & \textrm{card.} \\ 
\hline0& [1, -2] & 252\\ 
1& [1, -8/5] & 300\\ 
2& [2, -6/5] & 15750\\ 
\end{array}

$$
The walls in the orbit $O_{0,0}$ of cardinality $252$ are walls of $\Nef(X)$,
and hence they correspond to  
smooth rational curves on $X$.
Let $R_{252}$ denote the set of smooth rational curves on $X$
corresponding to the walls in $O_{0, 0}$.
Then $R_{252}$ coincides with the set of $h_F$-lines.
\subsection{The induced chamber $D_1$}\label{D_1}
The $\aut_X(D_0)$-orbit $O_{0,1}$  of the walls of $D_0$ contains a wall $(v_1)\sperp$, where 
$$
v_1 =[0, 1, 1, 0, 0, 1, 0, 1, 0, 1, 1, 0, 1, 0, 0, 1, 1, 1, 1, 1, 1, 1]\dual\in S_X\dual.
$$
We put
$$
w_1 =[1, 2, 2, 1, 1, 2, 1, 2, 1, 2, 2, 1, 2, 1, 1, 2, 2, 2, 2, 2, 2, 2, 2, 1, 1, 0]\dual \in L.
$$
Then $w_1$ is a Weyl vector, the Conway chamber $\DDD(w_1)$ is $S_X$-nondegenerate,
and the induced chamber 
$$
D_1 =\DDD(w_1)\cap \PPP_{S_X}
$$
is adjacent to $D_0$
along the wall $(v_1)\sperp$.
The vector 
$w_{1, S} \in S_X\dual$
is contained in the interior of $D_1$ and satisfies
$w_{1, S}^2=12/5$.
We put $h_1 =5 w_{1, S}$.
Then 
$$
h_1=[14, 16, -4, -6, -5, -11, 12, -8, -5, 0, 10, 8, -13, 3, -3, 5, -8, 10, 7, -2, 5, -10] 
$$
is a polarization of degree $60$.
The degree $\intM{h_F, h_1}{}$ of the polarization $h_1$ with respect to
$h_F$ is $15$.
The  automorphism group $\aut_{X}(D_1)$ of the polarized $K3$ surface $(X, h_1)$ is 
of oder $20160$.
The action of $\aut_{X}(D_1)$ 
decomposes $\Walls(D_1)$ into the union of $18$ orbits $O_{1, 0}, \dots, O_{1, 17}$
described as follows:
$$
\begin{array}{c|cc}
\textrm{no.} & \textrm{type} & \textrm{card.} \\ 
\hline0& [1, -2] & 168\\ 
1& [3/5, -8/5] & 8\\ 
2& [4/5, -8/5] & 15\\ 
3& [4/5, -8/5] & 15\\ 
4& [6/5, -8/5] & 70\\ 
5& [6/5, -8/5] & 70\\ 
6& [7/5, -8/5] & 168\\ 
7& [9/5, -6/5] & 280\\ 
8& [9/5, -6/5] & 280\\ 
\end{array}
\qquad
\begin{array}{c|cc}
\textrm{no.} & \textrm{type} & \textrm{card.} \\ 
\hline9& [2, -6/5] & 840\\ 
10& [2, -6/5] & 840\\ 
11& [11/5, -6/5] & 1680\\ 
12& [11/5, -6/5] & 1680\\ 
13& [11/5, -6/5] & 840\\ 
14& [11/5, -6/5] & 840\\ 
15& [8/5, -4/5] & 15\\ 
16& [8/5, -4/5] & 15\\ 
17& [9/5, -2/5] & 8\\ 
\end{array}

$$
We confirm by computer that 
the action of $\aut_{X}(D_1)$ on the orbit $O_{1,1}$ of cardinality $8$ embeds 
$\aut_{X}(D_1)$ into the symmetric group $\mathfrak{S}_8$.
Hence $\aut_{X}(D_1)$ is isomorphic to 
the alternating group $\mathfrak{A}_8$.
\par
The  wall $(v_1)\sperp$ separating 
$D_0$ and $D_1$ is a member of the orbit $O_{1,1}$.
Hence $D_1$ is adjacent to eight induced chambers $G_X$-congruent to $D_0$.
Moreover we have
$$
|\Aut_X(D_0)\cap \Aut_X(D_1)|=\frac{|\Aut_X(D_0)|}{300}=\frac{|\Aut_X(D_1)|}{8}=2520.
$$
\par
The walls in the orbit $O_{1,0}$ are walls of $\Nef(X)$,
and hence they correspond to  
smooth rational curves on $X$.
Let $R_{168}$ denote the set of smooth rational curves on $X$
corresponding to the walls in $O_{1,0}$.
We observe the following facts by a direct calculation:
\begin{proposition}\label{Shimada168}
Any  distinct two curves in $R_{168}$ are either disjoint  or intersecting at one point transversely.
For any curve $\varGamma$ in $R_{168}$,
there exist exactly $72$ curves in $R_{168}$ that intersect $\varGamma$.
\end{proposition}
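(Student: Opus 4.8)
The plan is to establish both assertions by an explicit finite computation with the $168$ curve classes constituting $R_{168}$, using the Gram matrix $\Gram_{S}$ of $S_X$ in Table~\ref{table:GramNS} and the explicit integral matrices of the elements of $\aut_X(D_1)$.

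First I would make the classes $[\varGamma]$ for $\varGamma\in R_{168}$ explicit as vectors in $S_X$. By the wall--curve criterion recalled in Section~\ref{subsec:embedding}, a wall $W$ of $\Nef(X)$ lying in $O_{1,0}$ has type $[1,-2]$, so its primitive vector $v_W\in S_X\dual$ already lies in $S_X$, is a $(-2)$-vector, and is the class of the corresponding smooth rational curve; in particular each curve in $R_{168}$ has $h_1$-degree $\intM{h_1,v_W}{}=5$. Starting from a single representative of the orbit $O_{1,0}$ and applying the group $\aut_X(D_1)\cong\mathfrak{A}_8$ (whose $22\times 22$ integral matrices are supplied by the algorithm of~\cite{ShimadaAlgoAut}), I obtain the complete list $[\varGamma_1],\dots,[\varGamma_{168}]\in S_X$.

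Next I would form the symmetric $168\times 168$ matrix with entries $\intM{[\varGamma_i],[\varGamma_j]}{}$, computed from $\Gram_S$. Two remarks reduce the first assertion to a property of this matrix. On the one hand, two distinct irreducible curves on a smooth projective surface meet with non-negative intersection number, equal to the sum over their common points of the local intersection multiplicities, each of which is at least $1$; hence for distinct $i,j$ the condition $\intM{[\varGamma_i],[\varGamma_j]}{}\in\{0,1\}$ is precisely equivalent to ``$\varGamma_i$ and $\varGamma_j$ are disjoint, or meet transversally at exactly one point''. On the other hand, applying the Hodge index theorem to $h_1$ and $[\varGamma_i]+[\varGamma_j]$, and using $h_1^2=60$ together with $\intM{h_1,[\varGamma_i]}{}=\intM{h_1,[\varGamma_j]}{}=5$, one gets a priori $\intM{[\varGamma_i],[\varGamma_j]}{}\le 2$; so the computation only needs to confirm that the value $2$ never occurs, i.e.\ that every off-diagonal entry is $0$ or $1$ (and every diagonal entry is $-2$). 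This I would check by machine, which proves the first statement.

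For the second statement, recall that $\aut_X(D_1)\cong\mathfrak{A}_8$ acts transitively on $R_{168}$ and preserves $\intM{\cdot,\cdot}{}$, so the number of curves in $R_{168}$ meeting a given $\varGamma$ --- the common valency of the graph whose adjacency matrix is the off-diagonal part of the matrix above --- is independent of $\varGamma$, and one reads this number off a single row of the matrix as $72$. The only real difficulty throughout is organizational: reliably enumerating the $168$ classes and verifying the $168\times 168$ intersection matrix (the data are recorded in~\cite{ShimadaCompData}); there is no delicate geometric point, the non-negativity of the intersection number of two distinct irreducible curves and its interpretation as a sum of local multiplicities being standard. One could, if desired, describe the whole graph by decomposing the action of the stabilizer of $\varGamma$ (of order $|\mathfrak{A}_8|/168=120$) on the remaining $167$ curves; a computer-free derivation of these facts would instead proceed from the Leech-lattice description of $R_{168}$ given later in the paper.
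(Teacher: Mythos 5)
Your proposal is correct and is essentially the paper's own argument: the paper proves Proposition~\ref{Shimada168} "by a direct calculation," i.e.\ by enumerating the $168$ classes (the primitive vectors $v_W$ for $W\in O_{1,0}$, which lie in $S_X$ and have square $-2$ by the wall criterion of Section~\ref{subsec:embedding}) and computing their pairwise intersection numbers, exactly as you describe. Your a priori Hodge-index bound $\intM{[\varGamma_i],[\varGamma_j]}{}\le 2$ and the use of the transitive $\mathfrak{A}_8$-action to get the constant valency $72$ are correct refinements, and your closing remark matches the paper's alternative Leech-lattice derivation in Section~\ref{sec:bylattice}.
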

\begin{proposition}\label{Shimada42}
Among $R_{168}$,
exactly $126$ curves are contained in the set $R_{252}$
of $h_F$-lines,
while the other $42$ curves are $h_F$-conics.
The deck-transformation of $X_F\to\P^2$ maps 
$R_{252}\cap R_{168}$ to 
the complement $R_{252}\setminus (R_{252}\cap R_{168})$
bijectively.
\end{proposition}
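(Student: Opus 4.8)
The plan is to reduce the first two assertions to a short computation with the explicit classes of the curves in $R_{168}$, and to deduce the statement about the deck transformation from Proposition~\ref{Shimada168}. By Section~\ref{D_1} the set $R_{168}$ consists of the $168$ smooth rational curves corresponding to the walls in the orbit $O_{1,0}$ of $\Walls(D_1)$; for each such curve $\varGamma$ the class $[\varGamma]\in S_X$ is read off from the wall. For each of these $168$ classes I would compute the integer $\langle[\varGamma],h_F\rangle$ using the Gram matrix of Table~\ref{table:GramNS} and the coordinate expression $h_F=[1,1,0,\dots,0]$. The claim to verify is that this number equals $1$ for exactly $126$ of the curves and $2$ for the remaining $42$, and in particular takes no other value. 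Since, by the definitions recalled in Section~\ref{sec:SX}, a smooth rational curve is an $h_F$-line precisely when its $h_F$-degree is $1$ and an $h_F$-conic precisely when its $h_F$-degree is $2$, this yields at once that $R_{252}\cap R_{168}$ is exactly the set of $126$ curves of $h_F$-degree $1$, while the other $42$ curves of $R_{168}$ are $h_F$-conics. (Equivalently, one can directly match those $126$ classes against the list of $252$ classes of $R_{252}$ produced in Section~\ref{subsec:D0}.)

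For the final assertion, let $\iota$ be the deck transformation of $\pi_F\colon X\to\P^2$; it is an automorphism of $(X,h_F)$, hence $\iota^*h_F=h_F$ and $\iota$ permutes the set $R_{252}$ of $h_F$-lines. For a tangent line $\ell_i$ the divisor $\pi_F^*\ell_i = l_i^+ + l_i^-$ is $\iota$-stable; since $\pi_F$ restricts to an isomorphism $l_i^+\to\ell_i$ on which $\iota$ acts trivially, $\iota$ cannot preserve $l_i^+$ — otherwise $l_i^+$ would be pointwise fixed, hence contained in the ramification curve $\pi_F^{-1}(C_F)$, which is absurd — so $\iota$ interchanges $l_i^+$ and $l_i^-$. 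Moreover $\langle l_i^+,l_i^-\rangle=3$, because these two components meet at one point with multiplicity $3$ (Introduction). Now take any $\varGamma\in R_{252}\cap R_{168}$, say $\varGamma=l_i^+$; then $\iota(\varGamma)=l_i^-$ lies in $R_{252}$, is distinct from $\varGamma$, and satisfies $\langle\varGamma,\iota(\varGamma)\rangle=3$. If $\iota(\varGamma)$ were also in $R_{168}$, then by Proposition~\ref{Shimada168} we would have $\langle\varGamma,\iota(\varGamma)\rangle\in\{0,1\}$, a contradiction. Hence $\iota$ maps $R_{252}\cap R_{168}$ into its complement $R_{252}\setminus(R_{252}\cap R_{168})$ inside $R_{252}$; since $\iota$ is injective and both of these sets have cardinality $252-126=126$, this map is a bijection.

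The only genuinely computational ingredient is the table of the $168$ values $\langle[\varGamma],h_F\rangle$, which is routine linear algebra once the classes $[\varGamma]$ are available. The two steps I expect to need the most care are confirming that $1$ and $2$ are the only $h_F$-degrees occurring in $R_{168}$ — so that the dichotomy ``$h_F$-line or $h_F$-conic'' is exhaustive — and the geometric input that the two components over a tangent line meet with multiplicity $3$ rather than transversally, since this is exactly what lets Proposition~\ref{Shimada168} rule out $\iota(\varGamma)\in R_{168}$.
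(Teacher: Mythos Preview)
Your approach is correct. For the first assertion, your direct computation of the $168$ $h_F$-degrees is exactly what the paper does: both Propositions~\ref{Shimada168} and~\ref{Shimada42} are simply stated as facts ``observed by a direct calculation'' with no further argument, so there is nothing to compare there.

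For the second assertion, your route is genuinely different from the paper's and somewhat more conceptual. The paper relies on having the explicit action of the deck transformation on $S_X$ (formula~\eqref{eq:flip}, namely $[\ell_i]\mapsto h_F-[\ell_i]$) and then directly checks, curve by curve, that the $126$ classes in $R_{252}\cap R_{168}$ are sent to the $126$ classes in the complement. You instead leverage Proposition~\ref{Shimada168}: since $\iota$ swaps the two components over each tangent line and these meet with multiplicity $3$, the image $\iota(\varGamma)$ of any $\varGamma\in R_{252}\cap R_{168}$ would violate the $\{0,1\}$ intersection constraint if it stayed in $R_{168}$. This is a clean argument that avoids a second $126$-fold lookup, at the price of depending on Proposition~\ref{Shimada168} and on the geometric input $\langle l_i^+,l_i^-\rangle=3$. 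Both approaches are valid; yours is shorter once Proposition~\ref{Shimada168} is in hand, while the paper's is more self-contained and would work even without knowing the pairwise intersections inside $R_{168}$.
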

\subsection{The induced chamber $D_2$}
The $\aut_X(D_0)$-orbit $O_{0,2}$  of the walls of $D_0$ 
 contains a wall $(v_2)\sperp$, where 
$$
v_2=[1, 1, 2, 1, 0, 1, 1, 1, 1, 1, 2, 0, 1, 1, 1, 2, 2, 1, 1, 1, 2, 2]\dual\in S_X\dual.
$$
We put
$$
w_2 =[4, 4, 7, 4, 1, 4, 4, 4, 4, 4, 7, 1, 4, 4, 4, 7, 7, 4, 4, 4, 7, 7, 2, 1, -1, 0]\dual \in L.
$$
Then $w_2$ is a Weyl vector, the Conway chamber $\DDD(w_2)$ is $S_X$-nondegenerate,
and the induced chamber 
$$
D_2 =\DDD(w_2)\cap \PPP_{S_X}
$$
is adjacent to $D_0$
along the wall $(v_2)\sperp$. 
The vector $w_{2, S} \in S_X\dual$ 
is contained in the interior of $D_2$ and satisfies
$w_{2, S}^2=16/5$.
We put $h_2 =5 w_{2, S}$. Then 
$$
h_2=[14, 11, 3, 6, 21, 15, -3, 18, 6, -6, -27, 0, 9, -12, 3, -15, -3, -9, -18, 12, 0, 15]
$$
is a polarization of degree $80$.
The degree $\intM{h_F, h_2}{}$ of the polarization $h_2$ with respect to
$h_F$ is $40$.
The  automorphism group $\aut_{X}(D_2)$ of the polarized $K3$ surface $(X, h_2)$ is of order $1152$.
The action of $\aut_{X}(D_2)$ 
decomposes $\Walls(D_2)$ into the union of twenty seven orbits $O_{2,0}, \dots, O_{2,26}$ described as follows:
$$
\begin{array}{c|cc}
\textrm{no.} & \textrm{type} & \textrm{card.} \\ 
\hline0& [1, -2] & 48\\ 
1& [1, -2] & 48\\ 
2& [2/5, -8/5] & 4\\ 
3& [2/5, -8/5] & 4\\ 
4& [1, -8/5] & 16\\ 
5& [1, -8/5] & 16\\ 
6& [8/5, -8/5] & 72\\ 
7& [8/5, -8/5] & 72\\ 
8& [8/5, -8/5] & 64\\ 
\end{array}
\qquad
\begin{array}{c|cc}
\textrm{no.} & \textrm{type} & \textrm{card.} \\ 
\hline9& [8/5, -8/5] & 64\\ 
10& [8/5, -6/5] & 24\\ 
11& [9/5, -6/5] & 48\\ 
12& [9/5, -6/5] & 48\\ 
13& [9/5, -6/5] & 16\\ 
14& [9/5, -6/5] & 16\\ 
15& [11/5, -6/5] & 288\\ 
16& [11/5, -6/5] & 288\\ 
17& [11/5, -6/5] & 96\\ 
\end{array}
\qquad
\begin{array}{c|cc}
\textrm{no.} & \textrm{type} & \textrm{card.} \\ 
\hline18& [11/5, -6/5] & 96\\ 
19& [11/5, -6/5] & 48\\ 
20& [11/5, -6/5] & 48\\ 
21& [12/5, -6/5] & 576\\ 
22& [12/5, -6/5] & 192\\ 
23& [12/5, -6/5] & 192\\ 
24& [12/5, -6/5] & 144\\ 
25& [8/5, -4/5] & 3\\ 
26& [8/5, -4/5] & 3\\ 
\end{array}

$$
\par
The wall $(v_2)\sperp$ separating 
$D_0$ and $D_2$ is a member of the orbit $O_{2,10}$.
Hence $D_2$ is adjacent to $24$ induced chambers $G_X$-congruent to $D_0$.
Moreover we have
$$
|\Aut_X(D_0)\cap \Aut_X(D_2)|=\frac{|\Aut_X(D_0)|}{15700}=\frac{|\Aut_X(D_2)|}{24}=48.
$$
\par
The walls in the orbits $O_{2,0}$ and  $O_{2,1}$ are walls of $\Nef(X)$,
and hence they correspond to 
smooth rational curves on $X$.
Let $R_{48,0}$ and $R_{48,1}$ denote the sets of smooth rational curves on $X$
corresponding to the walls in $O_{2,0}$ and  $O_{2,1}$, respectively.
We observe the following facts:
\begin{proposition}
Any distinct two curves in the union $R_{48, 0}\cup R_{48, 1}$ are
either disjoint  or intersecting at one point transversely.
For $\nu=0,1$, the set $R_{48, \nu}$ is a union  of  
 three sets $\SSS_{\nu 0}, \SSS_{\nu 1}, \SSS_{\nu 2}$ of disjoint $16$ smooth rational curves.
Each $\SSS_{\nu j}$ contains eight $h_F$-lines,
and the $h_F$-degree of the remaining eight  smooth rational curves is $4$.
We can number these  six sets so that they satisfy the conditions $(a), (b), (c)$ in Theorem~\ref{thm:six}.
\end{proposition}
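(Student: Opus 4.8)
The plan is to deduce the proposition from a finite computation in $S_X$ with the $96$ classes cut out by the walls in $O_{2,0}\cup O_{2,1}$, exactly in the spirit of Propositions~\ref{Shimada168} and~\ref{Shimada42}. For each wall $W$ in the orbit $O_{2,0}$ or $O_{2,1}$ of $D_2$, let $v_W\in S_X\dual$ be the primitive vector with $W=(v_W)\sperp$ and $\intM{v_W,u}{}>0$ for $u$ in the interior of $D_2$. The table $\Walls(D_2)$ exhibits, for each such $W$, an integer $c$ with $ac=1$, $nc^2=-2$ and $c\,v_W\in S_X$, where $[a,n]$ is the type of $W$; since $D_2\subset\Nef(X)$, this means that $W$ is a wall of $\Nef(X)$ and that $c\,v_W$ is a $(-2)$-class lying on a wall of $\Nef(X)$, hence (after replacing it by its negative if necessary to make it effective) the class of a smooth rational curve on $X$ (Rudakov--Shafarevich~\cite{MR633161}), just as for the orbits $O_{0,0}$ and $O_{1,0}$. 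Distinct walls give distinct hyperplanes and hence distinct classes, so we obtain $48$ distinct smooth rational curves forming $R_{48,0}$ and $48$ forming $R_{48,1}$. Using $\Gram_{S}$, form the $96\times96$ matrix of intersection numbers over $R_{48,0}\cup R_{48,1}$. Because two distinct irreducible curves meet in a non-negative number of points, checking that every off-diagonal entry of this matrix equals $0$ or $1$ proves the first assertion.

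Next I would exhibit the decomposition. From the intersection matrix one reads off that each curve in $R_{48,\nu}$ is disjoint from $35$ of the others and meets the remaining $12$, and one extracts three subsets $\SSS_{\nu 0},\SSS_{\nu 1},\SSS_{\nu 2}$, each consisting of $16$ mutually disjoint curves, with $R_{48,\nu}=\SSS_{\nu 0}\sqcup\SSS_{\nu 1}\sqcup\SSS_{\nu 2}$; the explicit lists are recorded in~\cite{ShimadaCompData}. As a structural check, $R_{48,0}$ and $R_{48,1}$ are the two orbits of $\aut_X(D_2)$ on $R_{48,0}\cup R_{48,1}$, and $\aut_X(D_2)$ permutes the three sets $\SSS_{\nu 0},\SSS_{\nu 1},\SSS_{\nu 2}$ inside each $R_{48,\nu}$. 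Computing $\intM{h_F,\varGamma}{}$ for each of the $96$ curves, one finds the value $1$ for exactly $24$ curves of each $R_{48,\nu}$, namely $8$ curves in each $\SSS_{\nu j}$, all of which belong to the set $R_{252}$ of $h_F$-lines, and the value $4$ for the other $8$ curves in each $\SSS_{\nu j}$; this is the middle assertion of the proposition.

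Finally I would fix a numbering of the six sets and verify the configurations. The labelling within $\{\SSS_{00},\SSS_{01},\SSS_{02}\}$ and within $\{\SSS_{10},\SSS_{11},\SSS_{12}\}$ is arbitrary, while the matching between the two triples is then determined by demanding that $\SSS_{0i}$ and $\SSS_{1i}$ carry the ``diagonal'' relation. For each of the $\binom{6}{2}=15$ pairs, compute the corresponding $16\times16$ submatrix of intersection numbers: each of its rows must contain exactly $6$, $12$, or $4$ entries equal to $1$ (and all others $0$) according as the pair is $\{\SSS_{\nu i},\SSS_{\nu j}\}$ with $i\ne j$, is $\{\SSS_{0i},\SSS_{1i}\}$, or is $\{\SSS_{0i},\SSS_{1j}\}$ with $i\ne j$; by definition this is precisely the statement that these pairs form $(16_6)$-, $(16_{12})$-, and $(16_4)$-configurations, i.e.\ properties (a), (b), (c) of Theorem~\ref{thm:six}. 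The only ingredients that are not finite linear algebra over $\Z$ are the identification of $(-2)$-walls of $\Nef(X)$ with classes of smooth rational curves and the inclusion $D_2\subset\Nef(X)$ (which holds since $D_0\subset\Nef(X)$ and $D_2$ is reached from $D_0$ by crossing walls that are not walls of $\Nef(X)$); granting these, the proposition --- and with it the part of Theorem~\ref{thm:main}(3) and of Theorem~\ref{thm:six} concerning these $96$ curves --- follows from the computations in~\cite{ShimadaCompData}. The real difficulty is not any single step but the organization of the data, and in particular the verification that a numbering of the six sets consistent with all of (a), (b), (c) exists; this is the point at which the argument could a priori fail and which must be checked rather than assumed.
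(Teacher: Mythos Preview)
Your proposal is correct and follows essentially the same approach as the paper: the proposition is stated as a direct computational observation (``We observe the following facts''), verified by computing the $96\times 96$ intersection matrix of the $(-2)$-classes coming from $O_{2,0}\cup O_{2,1}$, their $h_F$-degrees, and the combinatorics of the resulting six $16$-element subsets, with the supporting data recorded in~\cite{ShimadaCompData}. Your write-up is in fact more explicit than the paper about the intermediate checks (e.g.\ the count $35+12$ inside each $R_{48,\nu}$ and the role of $\aut_X(D_2)$ in permuting the $\SSS_{\nu j}$), but the substance is identical.
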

We remark the following fact.
\begin{proposition}
Let $\SSS$ and $\SSS\sprime$ be sets of disjoint $16$ smooth rational curves on $X$.
Then there exists $g\in \Aut(X)$ such that $g(\SSS)=\SSS\sprime$.
\end{proposition}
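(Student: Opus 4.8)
The plan is to reduce the statement to a uniqueness statement for primitive lattice embeddings, and then to promote it to a statement about $\Aut(X)$ by means of the Torelli theorem.

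\emph{Reduction.} By Nikulin's theorem on $K3$ surfaces carrying sixteen disjoint smooth rational curves (compare the Kummer surface structure of $X$ exploited in Section~\ref{sec:Kum}), for any such set $\SSS$ the class $\tfrac12\sum_{\varGamma\in\SSS}[\varGamma]$ already lies in $S_X$, and the primitive closure $N_\SSS$ in $S_X$ of the sublattice spanned by $\set{[\varGamma]}{\varGamma\in\SSS}$ together with this half--sum is isomorphic to the Nikulin (Kummer) lattice $\mathrm{N}_0$: a fixed even negative--definite lattice of rank $16$ with $A_{\mathrm N_0}\isom(\Z/2\Z)^{6}$, inside which the sixteen pairwise orthogonal $(-2)$--classes form a root basis of a unique type up to $\OG(\mathrm N_0)$. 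Since an irreducible curve is the unique effective representative of its class, $\SSS$ is recovered from the set of classes $\set{[\varGamma]}{\varGamma\in\SSS}$, and an element of $\Aut(X)=\aut(\Nef(X))\cap G_X$ carrying this set to $\set{[\varGamma\sprime]}{\varGamma\sprime\in\SSS\sprime}$ automatically carries $\SSS$ to $\SSS\sprime$. Thus it suffices to produce such an element of $\Aut(X)$.

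\emph{Uniqueness of the embedding.} Let $T$ be the orthogonal complement of $N_\SSS$ in $S_X$, a lattice of signature $(1,5)$. Because $A_{S_X}$ is $5$--elementary while $A_{\mathrm N_0}$ is $2$--elementary, the gluing data recovering $S_X$ from $N_\SSS\oplus T$ is forced: $A_{\mathrm N_0}$ maps isomorphically onto the $2$--part of $A_T$, so $A_T\isom(\Z/2\Z)^{6}\oplus(\Z/5\Z)^2$ with $q_T=(-q_{\mathrm N_0})\oplus q_S$, where $q_S$ is the discriminant form computed in Section~\ref{sec:SX}. Exactly as in the proof of Proposition~\ref{prop:emb}, one then verifies by a computer--aided search through tables of quadratic forms of the relevant rank and discriminant that $T$ is unique in its genus and that $\OG(T)\to\OG(q_T)$ is surjective (and likewise that $\OG(\mathrm N_0)\to\OG(q_{\mathrm N_0})$ is surjective). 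By Nikulin's criterion~\cite{MR525944} all primitive embeddings $\mathrm N_0\inj S_X$ are then conjugate under $\OG^+(S_X)$, and $\OG^+(S_X)$ acts transitively on the resulting sixteen--curve root bases; hence there is $g_0\in\OG^+(S_X)$ with $g_0\bigl(\set{[\varGamma]}{\varGamma\in\SSS}\bigr)=\set{[\varGamma\sprime]}{\varGamma\sprime\in\SSS\sprime}$.

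\emph{Passage to $\Aut(X)$ --- the main obstacle.} It remains to replace $g_0$ by an element of $\Aut(X)$. Using that $\Wgr(S_X)$ acts simply transitively on the $\RRR_{S_X}\sphyp$--chambers, one first corrects $g_0$ by an element of $\Wgr(S_X)$ so as to assume $g_0(\Nef(X))=\Nef(X)$; then $g_0$ maps curves to curves, so $g_0(\SSS)$ is again a set of sixteen disjoint smooth rational curves, spanning an $\OG^+(S_X)$--conjugate of $N_{\SSS\sprime}$. One must then show that $g_0(\SSS)$ and $\SSS\sprime$ are in fact related by an element of $\aut(\Nef(X))\cap G_X$; here the explicit description of $\Nef(X)$ by induced chambers in Section~\ref{sec:bycomputer}, the transitivity of $\OG(\mathrm N_0)$ on the sixteen--curve bases, and the computation of Section~\ref{sec:SX} must be combined --- in particular, since the image of $G_X$ in $\OG(q_S)$ has index $2$, an isometry fixing $N_{\SSS\sprime}$ and inducing the nontrivial coset of $\OG(q_S)$ is available (via the surjectivity $\OG(T)\to\OG(q_T)$ for $T=N_{\SSS\sprime}^{\perp}$) to correct the period without disturbing $\SSS\sprime$ or $\Nef(X)$ --- after which the Torelli theorem for supersingular $K3$ surfaces identifies the corrected isometry with an automorphism of $X$ carrying $\SSS$ to $\SSS\sprime$. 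I expect this last step --- matching the nef cone, the configuration and the period simultaneously --- to be the crux; the genus--uniqueness of $T$ and the surjectivities onto $\OG(q_T)$ and $\OG(q_{\mathrm N_0})$ are of the same nature as the verifications already made in Proposition~\ref{prop:emb} and are carried out by computer (see~\cite{ShimadaCompData}).
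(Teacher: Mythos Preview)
Your lattice-theoretic approach is natural, but the ``Passage to $\Aut(X)$'' step has a genuine gap that makes the argument circular. Once you correct $g_0$ by $w\in\Wgr(S_X)$ to land in $\aut(\Nef(X))$, the new isometry $wg_0$ no longer carries the classes of $\SSS$ to those of $\SSS\sprime$: the element $w$ moves $N_{\SSS\sprime}$ to some other Nikulin sublattice. Thus $wg_0(\SSS)$ is merely \emph{some} set of sixteen disjoint $(-2)$-curves, and you are left with exactly the statement you started from, only with $\SSS$ replaced by $wg_0(\SSS)$. Your period-correction device---an isometry $h$ fixing $N_{\SSS\sprime}$ pointwise and hitting the nontrivial coset in $\OG(q_S)$---would help only after you already possess an element of $\aut(\Nef(X))$ taking your set to $\SSS\sprime$; and even then there is no reason such an $h$, built from an arbitrary element of $\OG(T)$ on $T=N_{\SSS\sprime}^{\perp}$, should preserve $\Nef(X)$, since most walls of $\Nef(X)$ come from $(-2)$-classes lying in neither $N_{\SSS\sprime}$ nor $T$.

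The paper's proof sidesteps all of this. By Nikulin's theorem (valid in odd characteristic), each of $\SSS$ and $\SSS\sprime$ realises $X$ as a Kummer surface: contracting the sixteen curves gives $A/\langle\iota_A\rangle$ and $A\sprime/\langle\iota_{A\sprime}\rangle$ for abelian surfaces $A$, $A\sprime$. By Shioda both are superspecial, and in characteristic $5$ the superspecial abelian surface is unique up to isomorphism; any isomorphism $A\isom A\sprime$ commutes with the inversions, descends to the quotients, and---since $X$ is the common minimal resolution---lifts to an automorphism of $X$ sending $\SSS$ to $\SSS\sprime$. No Torelli, no genus computations, no computer.
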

\begin{proof}
By Nikulin~\cite{MR0429917},
if $\SSS_Y$ is a set of disjoint $16$ smooth rational curves
on a $K3$ surface $Y$ in characteristic $\ne 2$,
then $Y$ is a Kummer surface associated with
an abelian surface $A$ 
and $\SSS_Y$ is the set of exceptional curves
of the minimal resolution $Y\to A/\gen{\iota_{A}}$.
(The proof in Nikulin~\cite{MR0429917} is valid not only over $\C$ but also in odd characteristics.)
\par
Let $\zeta: X\to Z$ and $\zeta\sprime: X\to Z\sprime$
be the contractions of the $(-2)$-curves in $\SSS$ and $\SSS\sprime$,
respectively.
Then
there exist abelian surfaces $A$ and $A\sprime$
such that $Z\cong A/\gen{\iota_A}$ and $Z\sprime\cong A\sprime/\gen{\iota_{A\sprime}}$,
where $\iota_A$ and $\iota_{A\sprime}$ are the inversions of $A$ and $A\sprime$,
respectively.
By~\cite{MR555718},
both of $A$ and $A\sprime$ are superspecial.
Since a superspecial abelian surface is unique up to isomorphisms in characteristic $5$  by~\cite{MR555718},
there exists an isomorphism $f : A\isom A\sprime$ of abelian surfaces.
Since $f\circ \iota_A=\iota_{A\sprime}\circ f$,
the isomorphism $f$ induces  $A/\gen{\iota_A} \isom A\sprime/\gen{\iota_{A\sprime}}$,
and therefore we obtain an isomorphism $g\sprime : Z\isom Z\sprime$.
Since $X$, $Z$ and $Z\sprime$ are birational and $X$ is minimal,
there exists $g\in \Aut(X)$
such that $\zeta\sprime\circ g=g\sprime \circ \zeta$ holds.
We obviously have $g(\SSS)=\SSS\sprime$.
\end{proof}
%
%
\subsection{Further induced chambers}\label{subsec:further}
We define the \emph{level} of an induced chamber $D$ to be the minimal non-negative integer $\ell$ such that
there exists a chain
$$
D^{(0)}=D_0, D^{(1)}, \dots, D^{(\ell)}=D
$$ 
from  $D_0$ to $D$
of induced chambers such that $D^{(i-1)}$ and $D^{(i)}$ are adjacent.
The \emph{level} of a $G_X$-congruence class  of induced chambers 
is defined to be  the minimum of the levels of elements of the class.
We have made the list of the $G_X$-congruence classes of  induced chambers  of level $< 4$.
The number is
$$
\begin{array}{cc}
\text{level} & \text{number of $G_X$-conguence classes}\\
0 & 1 \\
1 & 2\\
2 & 12\\
3 & 328
\end{array}
$$
For level $4$, we found more than six thousand  $G_X$-congruence classes,
and hence we have given up the computation.
The data of the  induced chambers $D_i$ of level $2$ are presented in Table~\ref{table:level2}.
The third column is the orbit decomposition of the $(-2)$-walls of $D_i$ by the action of $\Aut_X(D_i)$.
\begin{table}
$$
\begin{array}{ccc}
i & |\Aut_X(D_i)| & \textrm{orbits of $(-2)$-walls} \\
\hline
3 &360& [18, 60]\\
4 &36& [6, 9, 18, 18]\\
5 &36& [6, 9, 18, 18]\\
6 &48& [6, 8, 12, 24]\\
7 &48& [6, 8, 12, 24]\\
8 &72& [3, 12, 12, 18]\\
9 &12& [3, 6, 6, 6, 6, 12]\\
10 &8& [2, 2, 2, 4, 4, 4, 4, 4, 8, 8]\\
11 &2& [1, 1, 2, 2, 2, 2, 2, 2, 2, 2, 2, 2, 2, 2, 2, 2, 2, 2]\\
12 &6& [2, 2, 3, 3, 3, 3, 6, 6, 6]\\
13 &6& [2, 2, 3, 3, 3, 3, 6, 6, 6]\\
14 &8& [2, 4, 4, 4, 4, 8, 8]
\end{array}
$$
\caption{Induced chambers of level $2$}\label{table:level2}
\end{table}
In level $3$, we have found many induced chambers $D_i$ with $ |\Aut_X(D_i)|=1$.
\begin{remark}
In~\cite{ShimadaChar5},
various sextic double plane   models of $X$ are systematically investigated by another method.
\end{remark}


%
%
\section{Proof of Theorems by lattice theory}\label{sec:bylattice}
In this section, we  prove Theorems~\ref{thm:main}~and~\ref{thm:six} by using 
lattice theory.  
To do this, we give three primitive embeddings of $S_X$ into 
the even unimodular lattice $L$ of signature $(1,25)$
 corresponding to the three cases in
Theorem~\ref{thm:main}, and then apply the Borcherds method and a theory of the Leech lattice. 
\par
\medskip
First of all we fix the notation. 
We denote by $\Lambda$ the unique even \emph{negative}-definite unimodular lattice of rank $24$ without $(-2)$-vectors;
that is,  $\Lambda$ is the \emph{Leech lattice}.  
In the following, we recall an explicit description of $\Lambda$
briefly.
Let $\Omega= \{ \infty, 0, 1,..., 22 \}$ be the projective line ${\P}^1({\F}_{23})$ 
over the field ${\F}_{23}$.  
We consider the set $P(\Omega)$ of
all subsets of $\Omega$ with the symmetric difference as a $24$-dimensional vector space over ${\F}_{2}$. 
Let ${\CCC}$ be the \emph{binary Golay code},  
which is a $12$-dimensional subspace of 
$P(\Omega)$.
We call a set in ${\CCC}$ a \emph{${\CCC}$-set}.  
A ${\CCC}$-set 
consists of $0$, $8$, $12$, $16$ or $24$ elements.  
An $8$-elements ${\CCC}$-set 
is called an \emph{octad},
and a set of $6$ tetrads is 
called a \emph{sextet} if the union of any two tetrads is 
an octad.  We denote by ${\CCC}(8)$ 
the set of all octads.
Let ${\R}^{24}$ be spanned by an
ortho-normal  basis $\nu_{i}$ ($i \in \Omega$).  For  a subset $S \subset \Omega$, we define $\nu_{S}$ to be
$\sum_{i\in S} \nu_i$ .  
Then the Leech lattice
$\Lambda$
is the lattice generated by the vectors $2\nu_{K}$ for $K \in {\CCC}(8)$ and
$\nu_{\Omega} - 4\nu_{\infty}$
with the symmetric bilinear form
$$
\gen{x, y} = - \frac{x \cdot y}{8}.
$$  
%
\begin{proposition}[Conway~\cite{MR0338152}, Section 4, Theorem 2]\label{prop:leech1} 
A vector 
$(\xi_{\infty}, \xi_{0},..., \xi_{22})$ with $\xi_i\in \Z$ is in $\Lambda$ if and only if

{\rm (i)}   the coordinates $\xi_{i}$ are all congruent modulo $2$, to  $m$, say;

{\rm (ii)} the set of $i$ for which $\xi_{i}$ takes any given value modulo 
$4$ is a ${\CCC}$-set;

{\rm (iii)} the coordinate-sum is congruent to $4m$ modulo $8$. 
\end{proposition}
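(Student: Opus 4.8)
The plan is to show that $\Lambda$ equals the set $\Lambda'$ of all $\xi\in\Z^{24}=\bigoplus_{i\in\Omega}\Z\nu_i$ satisfying (i), (ii), (iii), by first verifying that $\Lambda'$ is a subgroup of $\Z^{24}$ containing $\Lambda$, and then comparing the indices $[\Z^{24}:\Lambda]$ and $[\Z^{24}:\Lambda']$. The starting remark is that membership in $\Lambda'$ depends only on $\xi$ modulo $8$ — (i) is a congruence modulo $2$, (ii) modulo $4$, (iii) modulo $8$ — so $8\Z^{24}\subseteq\Lambda'$, the index $[\Z^{24}:\Lambda']$ is finite, and the whole argument takes place in $(\Z/8\Z)^{24}$. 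To see that $\Lambda'$ is a subgroup, note that (i) and (iii) behave well under $\xi\mapsto-\xi$ and under addition; for (ii) the key point is that, for $\xi$ satisfying (i) with common parity $m$, condition (ii) is \emph{equivalent} to the single requirement that
$$\rho(\xi)\;:=\;\set{i\in\Omega}{\xi_i\equiv 2\ \textrm{or}\ 3\pmod{4}}$$
be a $\CCC$-set, because the two possibly non-empty level sets of $\xi$ modulo $4$ are $\rho(\xi)$ and its complement, and the complement of a $\CCC$-set is a $\CCC$-set (as $\Omega\in\CCC$). Since $\rho(\xi)_i$ is the second binary digit of $\xi_i$, a carry computation shows that $\rho(\xi+\xi')$ equals the symmetric difference $\rho(\xi)+\rho(\xi')$ when at least one of $\xi,\xi'$ has all entries even, and equals its complement when both have all entries odd; in either case it lies in $\CCC$ whenever $\rho(\xi)$ and $\rho(\xi')$ do. Hence $\Lambda'$ is closed under addition, and to deduce $\Lambda\subseteq\Lambda'$ it now suffices to check the generators: for an octad $K$ the vector $2\nu_K$ has all entries in $\{0,2\}$, its two level sets modulo $4$ are $K$ and $\Omega\setminus K$ (both in $\CCC$), and coordinate sum $16\equiv 0$; and $\nu_\Omega-4\nu_\infty$ has all entries odd, all $\equiv 1$ modulo $4$, with coordinate sum $20\equiv 4$.

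For the index computation, set $N:=|\Lambda'/8\Z^{24}|$, the number of classes $\bar\xi\in(\Z/8\Z)^{24}$ satisfying (i)--(iii). Such a $\bar\xi$ is specified by a parity $m\in\{0,1\}$ and a $\CCC$-set $\rho=\rho(\bar\xi)$ (arbitrary in $\CCC$, by the reformulation of (ii) above), which together determine every $\bar\xi_i$ modulo $4$, followed by one bit $\delta_i\in\{0,1\}$ per coordinate recording whether $4$ is added. The only remaining constraint is (iii), and here one uses the fact — given in the excerpt — that every $\CCC$-set has cardinality in $\{0,8,12,16,24\}$, hence divisible by $4$: this makes the sum of the residues fixed by $(m,\rho)$ automatically divisible by $8$, so (iii) becomes a single non-trivial affine equation for $\sum_i\delta_i\in\Z/2\Z$, satisfied by exactly $2^{23}$ of the $2^{24}$ choices of $(\delta_i)$. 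Therefore $N=2\cdot|\CCC|\cdot 2^{23}=2^{1+12+23}=2^{36}$ and $[\Z^{24}:\Lambda']=8^{24}/N=2^{72-36}=2^{36}$. On the other hand $\Lambda$, being the Leech lattice, is unimodular for the form $\gen{x,y}=-x\cdot y/8$; writing $B$ for the $24\times 24$ integer matrix of a $\Z$-basis of $\Lambda$ in the basis $(\nu_i)$, unimodularity reads $(\det B)^2/8^{24}=1$, so $[\Z^{24}:\Lambda]=|\det B|=8^{12}=2^{36}$. Since $\Lambda\subseteq\Lambda'$ have the same finite index in $\Z^{24}$, we conclude $\Lambda=\Lambda'$, which is the assertion.

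The steps I expect to require the most care are the reformulation of (ii) as the single membership $\rho(\xi)\in\CCC$ together with its additivity — this is where the linearity of $\CCC$ and the fact that $\Omega$ is a codeword are used — and the bookkeeping in the count $N=2^{36}$, whose decisive input is the divisibility by $4$ of every $\CCC$-set cardinality: without it, condition (iii) would not collapse to a single condition modulo $2$. If one prefers to avoid citing unimodularity of $\Lambda$, one can instead prove the reverse inclusion $\Lambda'\subseteq\Lambda$ by an explicit reduction with respect to the given generators (Conway's original argument), but this requires proving along the way several membership facts such as $8\Z^{24}\subseteq\Lambda$, $4\nu_i+4\nu_j\in\Lambda$, and $2\nu_D\in\Lambda$ for a dodecad $D$, each of which rests on the Steiner-system structure $S(5,8,24)$ of $\CCC$ (every tetrad lies in a sextet, $\Omega$ partitions into three octads, and so on); the index comparison above seems the cleaner route.
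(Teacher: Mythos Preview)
The paper does not prove this proposition at all: it is quoted verbatim from Conway's lecture notes and used as a black box, so there is no ``paper's own proof'' to compare against. Your argument is correct and self-contained. The one step worth tightening is closure of $\Lambda'$ under negation for condition (ii): you check additivity of $\rho$ but not $\rho(-\xi)$. This is harmless, since you have already observed $8\Z^{24}\subseteq\Lambda'$, and in the finite group $(\Z/8\Z)^{24}$ a nonempty subset closed under addition is automatically a subgroup; alternatively one checks directly that $\rho(-\xi)$ is $\rho(\xi)$ when $m=0$ and its complement when $m=1$.

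Your route --- verify $\Lambda\subseteq\Lambda'$ on generators, then match indices via unimodularity --- is different from Conway's original. Conway proves the reverse inclusion $\Lambda'\subseteq\Lambda$ by an explicit reduction, successively subtracting vectors of the form $4\nu_i+4\nu_j$, $2\nu_K$, and $\nu_\Omega-4\nu_i$ to reduce an arbitrary $\xi\in\Lambda'$ to $0$; as you note, this requires knowing that each such vector lies in $\Lambda$, which in turn rests on the Steiner-system combinatorics of $\CCC$ (every tetrad extends to a sextet, $\Omega$ is a disjoint union of three octads, etc.). Your index comparison trades that combinatorics for the single fact that $\Lambda$ is unimodular, together with the cardinality list $\{0,8,12,16,24\}$ for $\CCC$-sets, which you cite from the surrounding text. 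Either approach is valid; yours is cleaner if unimodularity is taken as known, Conway's is more elementary in that it never invokes the discriminant.
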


We denote by $\Lambda_{n}$
the set of all vectors $x$ in $\Lambda$ with $\langle x, x \rangle = -n$.  
Note  that $\Lambda_{2}=\emptyset$.
%
\begin{proposition}[Conway-Sloane~\cite{MR1662447}, p.133, Table 4.13]
\label{prop:leech2} 
The  complete lists of $\Lambda_{4}$, $\Lambda_{6}$ are as follows:
\begin{eqnarray*}
\Lambda_{4} &=& \{ (\pm 2^{8}, 0^{16}), (\pm 3, \pm 1^{23}), (\pm 4^{2}, 0^{22})\},\\
\Lambda_{6} &=& \{ (\pm 2^{12}, 0^{12}), (\pm 3^{3}, \pm 1^{21}),
(\pm 4, \pm 2^{8}, 0^{15}), (\pm 5, \pm 1^{23}) \},
\end{eqnarray*}
where the signs are taken to satisfy the conditions in Proposition~\ref{prop:leech1}.
\end{proposition}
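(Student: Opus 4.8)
The plan is to argue directly from the coordinate description of $\Lambda$ given in Proposition~\ref{prop:leech1}. Since $\langle x,x\rangle = -\tfrac18\sum_{i}\xi_i^2$ for $x=(\xi_i)_{i\in\Omega}$, the condition $x\in\Lambda_n$ is equivalent to: $(\xi_i)$ is an integer vector satisfying (i)--(iii) of Proposition~\ref{prop:leech1} and $\sum_i \xi_i^2 = 8n$, i.e. $32$ for $\Lambda_4$ and $48$ for $\Lambda_6$. The inclusion of the right-hand sides into $\Lambda_n$ is a one-line check ($8\cdot 4=32$, $9+23=32$, $2\cdot 16=32$; $12\cdot 4=48$, $27+21=48$, $16+32=48$, $25+23=48$), together with the observation that each shape can be realized subject to (i)--(iii); so the substance is the reverse inclusion, which I would prove by enumeration, splitting on the common parity $m$ of the $\xi_i$ supplied by condition (i).

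First suppose all $\xi_i$ are even. Set $T=\{\,i\in\Omega : \xi_i\equiv 2\pmod 4\,\}$; by condition (ii) $T$ is a $\CCC$-set, so $|T|\in\{0,8,12,16,24\}$. Each $i\in T$ has $\xi_i^2\ge 4$ and each $i\notin T$ has $4\mid\xi_i$. Comparing $\sum\xi_i^2=8n$ with the lower bound $4|T|$ rules out $|T|\ge 16$ outright, and rules out $|T|=12$ for $n=4$. In the surviving cases the total is forced near the minimum: for $|T|=8$ (an octad) one gets $(\pm 2^8,0^{16})$ when $n=4$, while for $n=6$ the octad coordinates must still all be $\pm 2$ (a single $\pm 6$ would contribute $36$, and $36+7\cdot 4>48$) and the off-octad part satisfies $\sum\xi_i^2=16$ with $4\mid\xi_i$, forcing one $\pm 4$, giving $(\pm 4,\pm 2^8,0^{15})$; for $|T|=12$ (necessarily $n=6$) one gets $(\pm 2^{12},0^{12})$; and for $|T|=0$ one writes $\xi_i=4\zeta_i$ with $\sum\zeta_i^2=2$ or $3$, yielding $(\pm 4^2,0^{22})$ for $n=4$ and the candidate $(\pm 4^3,0^{21})$ for $n=6$ — but the latter is excluded because its coordinate sum $\pm 4\pm 4\pm 4$ is never $\equiv 0\pmod 8$, so condition (iii) eliminates it.

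Next suppose all $\xi_i$ are odd. Then $\xi_i^2\ge 1$ for every $i$, so $\sum_i\xi_i^2\ge 24$, and the excess $8n-24$ must be a sum of the increments $\xi_i^2-1\in\{0,8,24,48,\dots\}$. For $n=4$ the excess is $8$, forcing exactly one coordinate $\pm 3$ and the rest $\pm 1$: shape $(\pm 3,\pm 1^{23})$. For $n=6$ the excess is $24=24$ or $8+8+8$, giving either one $\pm 5$ and the rest $\pm 1$, or three $\pm 3$'s and the rest $\pm 1$: shapes $(\pm 5,\pm 1^{23})$ and $(\pm 3^3,\pm 1^{21})$. This exhausts all possibilities, completing the reverse inclusion.

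The computation is elementary, so there is no deep obstacle; the point that genuinely needs care is the interplay between the norm bound and the congruence conditions (ii), (iii). On one hand one must not miss a case in the bookkeeping of $|T|$ and of the odd-coordinate increments; on the other hand one must check, shape by shape, that conditions (ii)--(iii) are consistent with the shape (so that it really occurs) except in the single case $(\pm 4^3,0^{21})$, where condition (iii) is what rules it out. Verifying that each listed shape is attained amounts to choosing the positions of the distinguished coordinates to lie in (or split $\Omega$ into) suitable $\CCC$-sets and adjusting signs so that the coordinate sum lands in the correct residue class modulo $8$, using that $\Omega\setminus C\in\CCC$ whenever $C\in\CCC$; this is routine but is the step I would write out most carefully.
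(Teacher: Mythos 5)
Your argument is correct, but it is worth noting that the paper does not prove this proposition at all: it is quoted verbatim from Conway--Sloane (Table 4.13), so the ``official'' proof is a citation. What you have written is a self-contained derivation from Proposition~\ref{prop:leech1}, and the bookkeeping checks out: in the all-even case the set $T=\{i:\xi_i\equiv 2\pmod 4\}$ is a $\CCC$-set by (ii), the bound $\sum\xi_i^2\ge 4|T|$ correctly eliminates $|T|\ge 16$ (and $|T|=12$ when $n=4$), the forced shapes $(\pm2^8,0^{16})$, $(\pm2^{12},0^{12})$, $(\pm4,\pm2^8,0^{15})$, $(\pm4^2,0^{22})$ all come out right, and the exclusion of $(\pm4^3,0^{21})$ via the coordinate-sum condition (iii) (sum $\equiv \pm 4\pmod 8$) is exactly the point that a naive norm count would miss; in the all-odd case the increments $\xi_i^2-1\in\{0,8,24,48,\dots\}$ with excess $8$ resp.\ $24$ give precisely $(\pm3,\pm1^{23})$, $(\pm5,\pm1^{23})$ and $(\pm3^3,\pm1^{21})$. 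The one step you leave as ``routine'' --- that each listed shape is actually realized, i.e.\ that (ii) and (iii) can be satisfied for suitable supports and signs --- is harmless here because the statement itself stipulates that the signs are taken subject to Proposition~\ref{prop:leech1}, and the paper later exhibits explicit representatives such as $4\nu_\infty+\nu_\Omega$ and $2\nu_K$ anyway. So your route buys a proof where the paper offers only a reference, at the cost of the (minor) case analysis; there is no gap.
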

We fix a decomposition 
\begin{equation}\label{decomp}
L = U \oplus \Lambda,
\end{equation}
where $U$ is the even unimodular hyperbolic lattice of rank $2$
with the Gram matrix
$$
\left[\begin{array}{cc} 0 & 1 \\ 1 & 0 \end{array}\right].
$$
We write $(m,n, \lambda)$ for a vector in $L$,
 where $\lambda$ is in $\Lambda$, and $m, n$ are integers.
Then 
its norm is given by $2mn + \langle \lambda , \lambda \rangle$.  
We take a vector $w=(1,0,0)$ as a Weyl vector.
 Then a $(-2)$-vector $r$ in $L$ with $\langle r, w \rangle = 1$ is
called a \emph{Leech root}. 
Let $\DDD$ be the Conway chamber with respect to $w$.   Then
the automorphism group of $\DDD$
$$
\Aut(\DDD) = \set{ g \in \OG(L)}{\DDD^g =\DDD}
$$
is isomorphic to the affine automorphism group of $\Lambda$:
$$\Aut(\DDD) \cong \Lambda\rtimes \OG(\Lambda).$$
The set of all Leech roots bijectively
corresponds to the set $\Lambda$ as follows 
(Conway-Sloane~\cite{MR1662447}, Chapter~26, Theorem~3):
$$
L \ni r = (-1 - \langle \lambda, \lambda \rangle/2, 1, \lambda)  \;\;\longleftrightarrow \;\; \lambda \in \Lambda.
$$
\begin{remark}
{For Leech roots $r, r' \in L$ and the corresponding 
vectors $\lambda, \lambda^{'}$ in $\Lambda$, 
$\langle r, r' \rangle = 0$ if $\lambda - \lambda^{'} \in \Lambda_{4}$ 
and $\langle r, r' \rangle = 1$ if $\lambda - \lambda^{'} \in \Lambda_{6}$.
}
\end{remark}
\subsection{Proof of Theorem~\ref{thm:main}\,(1)}
We consider the following vectors in the Leech lattice $\Lambda$: 
\begin{equation}\label{XYZ1}
A = 4\nu_{\infty} +\nu_{\Omega},\;\; 
B=0, \;\; 
C=2\nu_{K_0}, \;\; 
 D = 4\nu_0 + \nu_{\Omega}, 
\end{equation}
where $K_0$ is an octad with $\infty\notin K_0$ and $0 \in K_0$.
Note that
$$A^2=D^2 =-6, \ \ C^2=-4,\ \ \langle A, C\rangle = -2, \ \ \langle A, D\rangle = -4, \ \ \langle C, D\rangle = -3.$$
Consider  the vectors in $L = U \oplus \Lambda$ defined by
\begin{equation}\label{xyz1}
a = -(2, 1, A),
\;\; 
 b = (-1, 1, 0), 
 \;\; 
c = (0, 1, C), 
 \;\; 
  d = (1,1, D).
\end{equation}
Obviously we have 
\begin{eqnarray*}
&&a^2= b^2 = -2, \;\;
 c^2 = d^2 =-4,  \;\;
  \langle a, b \rangle = \langle b, c \rangle = -1,  \;\; \\
&&\langle a, d \rangle = 1, \;\; \langle c, d \rangle =-2,  \;\;
\langle a, c \rangle = \langle b, d \rangle = 0.
\end{eqnarray*}
Let $R_1$ be the sublattice of $L$ generated by $a, b, c, d$.  
Note that the Gram matrix of $R_1$ is the same as the one given in~\eqref{eq:GramR}.
Obviously $R_1$ is primitive in $L$.   
Let $S_1$ be the orthogonal complement of $R_1$ in $L$. 
Then the signature of $S_1$ is $(1, 21)$ and $S_1\dual/S_1 \cong R_1\dual/R_1 \cong ({\Z}/5{\Z})^2$.
Thus $S_1$ is isomorphic to the N\'eron-Severi lattice $S_X$ of the supersingular $K3$ surface $X$ with Artin invariant $1$ in characteristic $5$.
\begin{lemma}\label{weyl3}
Let $w'$ be the projection of the Weyl vector $w$ into $S_1\dual$.  Then $w' \in S_1$ and $(w')^2=2$.  
Moreover $w'$ is conjugate to the class of an ample divisor under the action of $\Wgr (S_1)$.
\end{lemma}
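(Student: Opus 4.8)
The plan is to compute $w'$ explicitly from the decomposition $L = U\oplus\Lambda$ and the generators $a,b,c,d$ of $R_1$, and then verify the three assertions (integrality, norm, ampleness) in turn. First I would write $w = (1,0,0)$ and recall that $w' = w - w_{R_1}$, where $w_{R_1}\in R_1\dual$ is the orthogonal projection of $w$ onto $R_1\otimes\R$. Since $\langle w, a\rangle, \langle w, b\rangle, \langle w, c\rangle, \langle w, d\rangle$ are read off immediately from~\eqref{xyz1} — each of $a,b,c,d$ is a Leech root or a vector of the form $(m,1,\lambda)$, so pairing with $w=(1,0,0)$ just picks out the second coordinate — I can solve the $4\times 4$ linear system given by the Gram matrix~\eqref{eq:GramR} to get the coordinates of $w_{R_1}$ in the dual basis of $R_1$. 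Then $w' = w - w_{R_1}$ lies in $S_1\otimes\Q$ by construction, and $(w')^2 = w^2 - w_{R_1}^2 = 0 - w_{R_1}^2$; computing $w_{R_1}^2$ from the inverse Gram matrix gives $(w')^2 = 2$.

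For the integrality claim $w'\in S_1$ (not merely $S_1\dual$), the point is that $L$ is unimodular, so the projection $w_{S_1} = w'$ of a vector $w\in L$ lands in $S_1\dual$ with $w' \bmod S_1$ corresponding, under the glue isomorphism $\delta\colon A_{S_1}\isom A_{R_1}$ (compare the construction of $L$ in Proposition~\ref{prop:emb}), to $-(w_{R_1}\bmod R_1)\in A_{R_1}$. So it suffices to check that $w_{R_1}\in R_1$, i.e. that the solution of the linear system has integer entries. Given the specific shape of~\eqref{eq:GramR} and the values $\langle w,a\rangle=-2$, $\langle w,b\rangle=1$, $\langle w,c\rangle=1$, $\langle w,d\rangle=1$, this is a short determinant-free calculation; alternatively one exhibits an explicit integral combination of $a,b,c,d$ realizing $w_{R_1}$.

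Finally, for ampleness: $w'$ has positive norm, so after possibly negating it we may assume $w'\in\PPP_{S_1}$. Since $S_1\cong S_X$ is $5$-elementary of the correct discriminant form, $\Nef$-cones are $\RRR_{S_1}\sphyp$-chambers and $\Wgr(S_1)$ acts transitively on them; hence it is enough to show $(w')^2 = 2$ forces $w'$ into the positive cone and that no wall passes through it in a way obstructing conjugacy — equivalently, that $w'$ is not fixed by any reflection $s_r$, $r\in\RRR_{S_1}$, i.e. $\langle w', r\rangle\neq 0$ for all $(-2)$-vectors $r\in S_1$. The clean way to get this is via the Conway chamber: $w$ itself is the Weyl vector of $\DDD = \DDD(w)$, and $w'$ lies in the interior of the induced chamber $\DDD(w)\cap\PPP_{S_1}$ because $\langle w, r\rangle = 1 > 0$ for every Leech root $r$, hence $\langle w', r_{S_1}\rangle = \langle w, r_{S_1}\rangle = \langle w, r\rangle = 1 > 0$ for every $r\in\Delta(w)$ with $r_{S_1}\in\RRR_{L|S_1}$; in particular $w'$ lies strictly inside an induced chamber, which sits inside a single $\RRR_{S_1}\sphyp$-chamber, and that chamber is $\Wgr(S_1)$-conjugate to $\Nef(X)$. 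The main obstacle I anticipate is not any single computation but making the identification $S_1\cong S_X$ precise enough that "$w'$ is $\Wgr(S_1)$-conjugate to an ample class" has content: one must know that $S_1$ genuinely carries the geometry of $X$, i.e. that the chosen positive cone and the $\RRR_{S_1}$-chamber structure match those of $S_X$ — but this follows from the uniqueness of the primitive embedding $S_X\inj L$ up to $\OG(L)$ established in Proposition~\ref{prop:emb}, together with the fact that $\Wgr(S_1)$ acts transitively on $\RRR_{S_1}\sphyp$-chambers.
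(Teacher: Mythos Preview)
Your computation of $w'$ and the argument that $w'\in S_1$ with $(w')^2=2$ follow the paper's approach, though you have a slip: from $a=-(2,1,A)=(-2,-1,-A)$ one reads off $\langle w,a\rangle=-1$, not $-2$. With the correct value the linear system gives $w''=a-b\in R_1$ directly, and the paper simply observes this rather than invoking the glue map.

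The ampleness argument, however, has a genuine gap. In your chain
\[
\langle w', r_{S_1}\rangle \;=\; \langle w, r_{S_1}\rangle \;=\; \langle w, r\rangle \;=\; 1,
\]
the second equality is false: writing $r=r_{S_1}+r_{R_1}$ gives $\langle w, r\rangle = \langle w, r_{S_1}\rangle + \langle w, r_{R_1}\rangle$, and $\langle w, r_{R_1}\rangle$ need not vanish since $w$ is \emph{not} orthogonal to $R_1$ (indeed $\langle w,a\rangle=-1$). So your argument does not establish that $w'$ lies in the interior of the induced chamber, and the route through Leech roots and their $S_1$-projections breaks down.

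The paper's fix is shorter and avoids projections of Leech roots altogether: take $r\in S_1$ with $r^2=-2$. Then $r$ is already orthogonal to $R_1$, so $\langle r, w''\rangle=0$ and hence $\langle r, w'\rangle=\langle r, w\rangle$. But $r$ is also a $(-2)$-vector of $L$, and since $\langle w\rangle^\perp/\langle w\rangle$ is the Leech lattice (no $(-2)$-vectors), one has $\langle r, w\rangle\neq 0$. Thus $w'$ lies off every wall $(r)^\perp$ for $r\in\RRR_{S_1}$, hence in the interior of an $\RRR_{S_1}^\perp$-chamber, and $\Wgr(S_1)$ carries it to an ample class. The point is to work with $(-2)$-vectors \emph{in} $S_1$, where the orthogonality to $R_1$ is automatic, rather than with projections $r_{S_1}$ of arbitrary Leech roots.
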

\begin{proof}
Denote by $w''$ the projection of $w$ into $R_1\dual$.  By definition (\ref{xyz1}), we have  $\langle w'', a \rangle = -1$
and $\langle w'', b\rangle = \langle w'', c \rangle = \langle w'', d \rangle = 1$.  This implies that 
$w'' = a-b \in R_1$.  Hence $w' = w - w'' \in S_1$ and $(w')^2 = 2$.
Let $r$ be any $(-2)$-vector in $S_1$.  Then, under the embedding $S_1 \subset L$,
$r$ is a $(-2)$-vector in $L$.  Therefore 
$\langle r, w' \rangle = \langle r, w\rangle \ne 0$.
Hence we have the last assertion.
\end{proof}
Now we  determine all smooth rational curves on $X$ whose degree with respect to $w'$ is minimal.
Note that such curves correspond to all Leech roots perpendicular to $R_1$ under the above embedding $S_1 \subset L$.
\begin{lemma}\label{252}
There exist exactly $252$ Leech roots which are orthogonal to $R_1$.
\end{lemma}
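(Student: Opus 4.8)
We need to count the Leech roots $r = (-1-\langle\lambda,\lambda\rangle/2,\,1,\,\lambda)$ that are orthogonal to all four generators $a,b,c,d$ of $R_1$; by the correspondence in Section~\ref{subsec:Conway}, $\langle r, w\rangle = 1$ automatically, so $r$ being orthogonal to $R_1$ is the only condition. Since $a = -(2,1,A)$, $b=(-1,1,0)$, $c=(0,1,C)$, $d=(1,1,D)$, the norm of $(m,n,\lambda)$ is $2mn+\langle\lambda,\lambda\rangle$, and the Leech root has $(m,n)=(-1-\langle\lambda,\lambda\rangle/2,\,1)$, I would translate the four equations $\langle r,a\rangle = \langle r,b\rangle = \langle r,c\rangle = \langle r,d\rangle = 0$ into four affine-linear conditions on $\lambda \in \Lambda$. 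Using $\langle (m,n,\lambda),(m',n',\lambda')\rangle = mn' + m'n + \langle\lambda,\lambda'\rangle$, each condition has the shape $(\text{integer depending on }\langle\lambda,\lambda\rangle) + \langle\lambda, *\rangle = 0$ where $* \in \{A, 0, C, D\}$. The condition from $b=(-1,1,0)$ is the cleanest: it forces $\langle\lambda,\lambda\rangle$ to a fixed value (one checks it gives $\langle r, b\rangle = -1 - (-1-\langle\lambda,\lambda\rangle/2) = \langle\lambda,\lambda\rangle/2$, so $\langle\lambda,\lambda\rangle = 0$, i.e. $\lambda = 0$?). I would recompute this carefully; more likely the four conditions together pin down $\langle\lambda,\lambda\rangle \in \{-4,-6\}$ and then impose that $\lambda$ lies in prescribed residue classes relative to $A, C, D$.

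The cleanest route, and the one I would actually carry out, is the geometric reformulation via the Remark just before Section~\ref{subsec:period}: for two Leech roots $r, r'$ with associated vectors $\lambda,\lambda'$, one has $\langle r,r'\rangle = 0$ iff $\lambda - \lambda' \in \Lambda_4$ and $\langle r,r'\rangle = 1$ iff $\lambda - \lambda' \in \Lambda_6$. The four generators $a,b,c,d$ themselves are not Leech roots ($c,d$ have norm $-4$), so I would instead express orthogonality to $R_1$ as orthogonality to a spanning set of $R_1$ built from Leech roots or from vectors whose inner products with $(m,n,\lambda)$ are easy. In fact $b = (-1,1,0)$ IS essentially the Leech root attached to $\lambda = 0$ up to the sign convention, and $a$ is attached (up to sign) to $\lambda = A$ with $A^2 = -6$; so the pair $\langle r,a\rangle, \langle r,b\rangle$ becomes a statement about $\lambda$ and $\lambda - A$ lying in $\Lambda_4 \cup \Lambda_6$, i.e. $\langle\lambda,\lambda\rangle \in \{-4,-6\}$ and $\langle \lambda - A, \lambda-A\rangle \in \{-4,-6\}$. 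Expanding, $\langle\lambda-A,\lambda-A\rangle = \langle\lambda,\lambda\rangle - 2\langle\lambda,A\rangle + A^2 = \langle\lambda,\lambda\rangle - 2\langle\lambda,A\rangle - 6$, which together with $\langle\lambda,\lambda\rangle \in \{-4,-6\}$ forces $\langle\lambda,A\rangle$ into a short list. Doing the same with $c$ (attached to $\lambda = C$, $C^2 = -4$) and $d$ (attached to $\lambda = D$, $D^2 = -6$) reduces the problem to: count $\lambda \in \Lambda$ with prescribed norm $\langle\lambda,\lambda\rangle$ and prescribed inner products $\langle\lambda,A\rangle$, $\langle\lambda,C\rangle$, $\langle\lambda,D\rangle$.

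At that point I would use the explicit coordinate description: $A = 4\nu_\infty + \nu_\Omega = (5, 1,1,\dots,1)$ (value $5$ at $\infty$, $1$ elsewhere, divided by $-8$ in the form), $D = (1,5,1,1,\dots,1)$ (value $5$ at $0$), $C = 2\nu_{K_0}$ with $K_0$ an octad containing $0$ but not $\infty$, and $B = 0$. With $\langle x,y\rangle = -(x\cdot y)/8$, the conditions $\langle\lambda,A\rangle$, $\langle\lambda,D\rangle$ become linear conditions on the coordinate-sum and on $\xi_\infty$, $\xi_0$ of $\lambda = (\xi_i)$, while $\langle\lambda,C\rangle$ constrains $\sum_{i\in K_0}\xi_i$. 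I expect these to cut the shape-vectors of $\Lambda_4$ and $\Lambda_6$ (listed in Proposition~\ref{prop:leech2}) down to exactly $252$ vectors, which I would verify by going through the shape types $(\pm 2^8, 0^{16})$, $(\pm 3, \pm 1^{23})$, $(\pm 4^2, 0^{22})$ for norm $-4$ and the four shapes for norm $-6$, counting in each case using the Golay code structure (Proposition~\ref{prop:leech1}): which octads, which $\mathcal{C}$-sets, satisfy the support conditions forced by $\langle\lambda,A\rangle,\langle\lambda,C\rangle,\langle\lambda,D\rangle$. The number $252 = \binom{10}{5}$ strongly suggests the count will organize around choosing $5$ points out of a distinguished $10$-element set (an octad-plus-two-points configuration, or a dodecad), and I would look for that combinatorial identity to make the enumeration clean rather than brute-force.

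\textbf{Main obstacle.} The bookkeeping in the final enumeration is the hard part: one must be careful that the "two conditions per generator" (norm $\in\{-4,-6\}$ giving two cases) do not multiply into an unwieldy case split, and that the residue/support constraints from the Golay code are applied correctly shape-by-shape. I would mitigate this by first using the $b$-condition (orthogonality to $\lambda = 0$) to fix $\langle\lambda,\lambda\rangle$ — I expect it forces $\langle\lambda,\lambda\rangle = -4$ or $-6$ cleanly — and then using one more generator to collapse to a single norm, after which only a one-parameter family of shapes remains and the Golay-code count is short. The risk that the problem does not collapse so neatly, and genuinely requires handling both norms simultaneously against all of $\Lambda_4$ and $\Lambda_6$, is the thing I would watch for.
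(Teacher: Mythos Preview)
Your overall strategy---translate orthogonality to $a,b,c,d$ into conditions on $\lambda$ and then enumerate by shape in $\Lambda$---is exactly the paper's approach. But the execution goes astray at the first step and never recovers the clean structure.

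The computation of $\langle r,b\rangle$ is wrong. With $r=(-1-\langle\lambda,\lambda\rangle/2,\,1,\,\lambda)$ and $b=(-1,1,0)$, the formula $\langle(m,n,\lambda),(m',n',\lambda')\rangle = mn'+m'n+\langle\lambda,\lambda'\rangle$ gives
\[
\langle r,b\rangle \;=\; \bigl(-1-\tfrac{\langle\lambda,\lambda\rangle}{2}\bigr)\cdot 1 + (-1)\cdot 1 + 0 \;=\; -2 - \tfrac{\langle\lambda,\lambda\rangle}{2},
\]
so $\langle r,b\rangle=0$ forces $\langle\lambda,\lambda\rangle=-4$, i.e.\ $\lambda\in\Lambda_4$ and $r=(1,1,\lambda)$. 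There is no ambiguity between norm $-4$ and $-6$; the case split you worry about in your ``Main obstacle'' never arises. Once this is fixed, the remaining three orthogonality conditions read off directly as $\langle\lambda,A\rangle=-3$, $\langle\lambda,C\rangle=-1$, $\langle\lambda,D\rangle=-2$.

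Your detour through the Leech-root difference remark is unproductive here: $c$ and $d$ are \emph{not} Leech roots (the Leech roots attached to $C$ and $D$ are $(1,1,C)$ and $(2,1,D)$, not $c=(0,1,C)$ and $d=(1,1,D)$), so that remark does not convert orthogonality to $c,d$ into a $\Lambda_4/\Lambda_6$ statement. This is what misled you into thinking both norms were in play.

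Finally, the $\binom{10}{5}=252$ heuristic is a red herring. Running through the three shapes of $\Lambda_4$ against the three inner-product constraints, the shape $(\pm4^2,0^{22})$ is eliminated, and each of the other two shapes yields exactly the vectors $2\nu_K$ and $A-2\nu_K$ for octads $K$ with $\infty\in K$, $0\notin K$, $|K\cap K_0|=2$. A standard Steiner-system count gives $126$ such octads (for each of the $\binom{7}{2}=21$ pairs in $K_0\setminus\{0\}$ there are $21-\binom{6}{2}=6$ octads meeting $K_0$ in exactly that pair and containing $\infty$), and $2\times 126=252$. That octad count is the actual combinatorial core, and your plan never reaches it.
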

\begin{proof}
Let $r$ be a Leech root perpendicular to $R_1$.  The condition $\langle r, b\rangle = 0$
implies $r = (1,1, \lambda)$ with $\lambda \in \Lambda_4$.  Similarly we have
\begin{equation}\label{cond}
\langle \lambda, A\rangle = -3, \;\;
\langle \lambda, C \rangle = -1, \;\;
\langle \lambda, D \rangle = -2.
\end{equation}
Now we use Proposition~\ref{prop:leech1}.  If $\lambda = \pm 4\nu_i \pm 4\nu_j$, then
the condition $\langle \lambda, A \rangle = -3$ implies that $\lambda = 4\nu_{\infty}+ 4\nu_i$.  
Then $\langle \lambda, D\rangle = -1$ or $-3$.  This contradicts~\eqref{cond}.
\par
If $\lambda = (\pm 2^8, 0^{16})$, then the condition $\langle \lambda, A\rangle = -3$
implies that $\lambda = 2\nu_K$ where $K$ is an octad containing $\infty$.
The condition $\langle \lambda, D \rangle = -2$ implies that $K$ does not contain $0$, and finally the condition $\langle \lambda, C \rangle = -1$ implies that $|K_0\cap K| =2$.
\par
If $\lambda = (\pm 3, \pm 1^{23})$, we first show that the case $\lambda = (-3, \pm 1^{23})$ does not occur.  Assume $\lambda = (-3, \pm 1^{23})$.  Since 
$\langle \lambda, A\rangle = -3$, we have $\lambda = (-3, 1^{23}) = \nu_{\Omega} - 4\nu_i$,
$i\ne \infty$.  Then $\langle \lambda, D \rangle = -1$ or $-3$.  
This contradicts the condition~\eqref{cond}.  
Now assume that $\lambda = (3, \pm 1^{23})$.
Since $\langle \lambda, A\rangle = -3$, we have  $\lambda = 4\nu_{\infty} + \nu_{\Omega} - 2\nu_K$ where $K$ is an octad containing $\infty$.  
The condition 
$\langle \lambda, D\rangle = -2$ implies that $K$ does not contain $0$.  Finally the condition $\langle \lambda, C \rangle = -1$ implies that $|K\cap K_0| = 2$.
\par
Thus the desired Leech roots are 
$$
(1,1,2\nu_K) \;\; {\rm and} \;\; (1,1, 4\nu_{\infty} + \nu_{\Omega} -2\nu_K) = (1,1, A -2\nu_K)
$$
where $K$ is an octad such that $\infty \in K$, $0\notin K$ and $|K \cap K_0|=2$. 
\par
In the following,  we show that there exist exactly $126$ such octads $K$.
Let $a_1, a_2$ be in $K_0\setminus \{0\}$.  Then the number of octads containing three points $\infty, a_1, a_2$ is 21 (see Conway~\cite{MR0338152}, Theorem 11).
Take two points $a_3, a_4 \in K_0\setminus \{a_1, a_2\}$.  Then there exists exactly one octad containing $5$ points $\infty, a_1, a_2, a_3, a_4$.
Thus the number of octads $K$ containing $\infty, a_1, a_2$ and satisfying $K\cap K_0 =\{a_1, a_2\}$ is $21 - {6\choose 2}  = 6$.  Therefore
the number of octads $K$ containing $\infty$ and satisfying $|K\cap K_0| = 2$ is ${7\choose 2}\times 6 = 126$.
\end{proof}
\begin{theorem}
For a suitable identification of $S_1$ with $S_X$, $(X, w')$ is isomorphic to
$(X, h_F)$.
\end{theorem}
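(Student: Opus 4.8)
The plan is to show that the induced chamber $D_1' = \DDD(w)\cap \PPP_{S_1}$ produced from the Weyl vector $w=(1,0,0)$ and the embedding $S_1\inj L$ of~\eqref{xyz1} is, after an isometry $S_1\isom S_X$, the chamber $D_0$ of Section~\ref{subsec:D0}, and that under this isometry $w'$ maps to $h_F$. The key point is that the combinatorial data attached to the two chambers agree: by Lemma~\ref{252}, $w'$ lies in the interior of a chamber whose $(-2)$-walls coming from Leech roots orthogonal to $R_1$ number exactly $252$, which matches the orbit $O_{0,0}$ of $\Walls(D_0)$; and by Lemma~\ref{weyl3}, $w'\in S_1$ with $(w')^2 = 2$, exactly as $h_F^2 = 2$. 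Since $w'$ is, up to $\Wgr(S_1)$, an ample class, the $252$ curves are precisely the smooth rational curves $\varGamma$ with $\intM{\varGamma, w'}{}=1$.

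First I would invoke Proposition~\ref{prop:emb}(2): the embedding $S_X\inj L$ is unique up to $\OG(L)$, so there is $g\in\OG(L)$ carrying the embedding $S_1\inj L$ of this section to the embedding $S_X\inj L$ of Section~\ref{subsec:embedding}, hence an isometry $\phi: S_1\isom S_X$ compatible with the two positive cones. Under $\phi$, the Conway chamber $\DDD(w)$ maps to $\DDD(w_0^g)$ for the corresponding Weyl vector; I would check that $w$ maps to $w_0 = h_F + u_1$ of~\eqref{eq:Weyl-fermat} — equivalently, that the projection $w'$ of $w$ to $S_1\dual$ maps to $h_F$ — using that $w'\in S_1$, $(w')^2=2$, and $w'$ is nef (all by Lemma~\ref{weyl3}), together with the fact that a degree-$2$ nef class with exactly $252$ $h_F$-lines-type $(-2)$-walls is, by the description of $S_X$ in Section~\ref{sec:SX} and the transitivity in Theorem~\ref{thm:main}(1), $\OG^+(S_X)$-equivalent to $h_F$.

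Then it remains to pin down that the polarization $w' = h_F$ recovers the Fermat double sextic, not merely an abstract degree-$2$ polarized $K3$. For this I would argue that $h_F$ is the \emph{unique} (up to $\Aut(X)$) polarization of degree $2$ on $X$ whose $252$ $(-2)$-curves of degree $1$ form the configuration of $h_F$-lines — indeed any degree-$2$ polarization $h$ on a $K3$ gives a double cover of $\P^2$, and the branch sextic is recovered from $h^\perp$ and the set of $(-2)$-curves meeting $h$ with multiplicity $1$; the count $252 = 2\times 126$ and the intersection pattern (pairs of curves meeting with multiplicity $3$, coming from $\F_{25}$-tangent lines) identify the sextic as $C_F$ by the characterization of Section~\ref{sec:SX}. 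Combining, $(X,w')\cong(X,h_F)$.

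The main obstacle I expect is the bookkeeping in the second step: matching $w$ (resp.\ $w'$) to $w_0$ (resp.\ $h_F$) requires either an explicit change of basis between the Leech-lattice coordinates here and the $[\ell_i]$-coordinates of Section~\ref{sec:SX}, or an abstract uniqueness argument showing that a pair (Weyl vector, $S_X$-nondegenerate Conway chamber) inducing a $252$-walled chamber with interior vector of norm $2$ is unique up to $\OG(L)$. The abstract route is cleaner: it reduces to showing the induced chamber here has exactly the same wall-type data $[a,n]$ as $D_0$ in the table of Section~\ref{subsec:D0}, which follows from Lemma~\ref{252} for the $[1,-2]$-walls and from a short direct computation for the other two orbits — a finite check over vectors of $\Lambda_4$ and $\Lambda_6$ subject to the constraints~\eqref{cond}-type linear conditions against $A,B,C,D$.
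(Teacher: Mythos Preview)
Your strategy is in the right spirit but diverges from the paper's argument at the crucial step, and the divergence contains a genuine gap. Invoking Proposition~\ref{prop:emb}(2) gives you a $g\in\OG(L)$ aligning the two embeddings of $S_X$ into $L$, but it does \emph{not} align the Weyl vectors: after applying $g$ you have the fixed embedding $S_X\inj L$ together with a Weyl vector $w^g$ that need not equal $w_0=h_F+u_1$. Your proposal to close this by matching the numerical wall data of the two induced chambers (252 walls of type $[1,-2]$, etc.) does not by itself yield an isometry carrying one chamber to the other; nothing in the paper guarantees that induced chambers with identical wall-type lists lie in a single $\OG^+(S_X)$-orbit, and the appeal to ``transitivity in Theorem~\ref{thm:main}(1)'' is misplaced, since that theorem concerns the action of $\Aut(X,h_F)$ on the 252 curves, not transitivity on degree-$2$ polarizations.

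The paper supplies precisely the missing transitivity, but at the level of $L$ rather than $S_X$. It rewrites the generators $u_1,\dots,u_4$ of $R$ as $a,b,c,d$, then builds a \emph{second} hyperbolic decomposition $L=U'\oplus\Lambda'$ adapted to $w_0$ (using that $w_0$ and $u_2$ span a hyperbolic plane). In these coordinates the four generators have exactly the same shape as in~\eqref{xyz1}, with some $A',B',C',D'\in\Lambda'$ in place of $A,B,C,D$; and the associated Leech roots form a root sublattice of type $A_4$. The key input is then Borcherds' Lemma~6.1 (\cite{MR913200}): the stabiliser $\Aut(\DDD)$ of the Weyl vector acts transitively on $A_4$ root configurations of this type. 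This single stroke simultaneously aligns the embedding \emph{and} the Weyl vector, giving $h_F=w'$ directly. Your third paragraph, recovering the Fermat sextic from the 252 lines, then becomes unnecessary: once $w'=h_F$ as classes in $S_X$, the statement is proved.
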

\begin{proof}
Recall that 
we have given a primitive embedding of $S_1$ into $L$ 
with a Weyl vector $w$ whose orthogonal complement is $R_1$ (see (\ref{xyz1})). 
On the other hand, we have given a primitive embedding of $S_X$ into $L$ 
with a Weyl vector $w_0$ whose orthogonal
complement is $R$ (see~\eqref{eq:Weyl-fermat}).  
We identify these two embeddings as follows.
First we use the decomposition $L=U\oplus \Lambda$ given in (\ref{decomp}) and
we may assume that $R$ is generated by
$$
u_1=a-b, \;\;  u_2= -b, \;\; u_3=-c+d, \;\; u_4 =d, 
$$
where $\{u_1,u_2,u_3,u_4\}$ is a basis of $R$ with the Gram matrix~\eqref{eq:GramR}.
Obviously $R=R_1$.
Then $S_X = R^{\perp}$.
The Weyl vector $w_0 = h_F+u_1$ and $u_2$ generate a hyperbolic plane $U' (\cong U)$ in $L$,
and hence we have a decomposition
$$
L=U' \oplus \Lambda', 
$$
where $\Lambda' = U'^{\perp} \cong \Lambda$.  Write $w_0=(1,0,0)$ and $u_2 = (1,-1,0)$ with respect to the
decomposition $L=U'\oplus \Lambda'$.  
Since $\langle w_0, a \rangle = -1$
and $\langle u_2, a \rangle = 1$, we have
$$
a =(-2,-1, -A'),
$$
where $A'\in \Lambda'$ satisfies $A'^2 = -6$.
Similarly we have
\begin{eqnarray*}
&& b=(-1,1,0),\\
&& c =(0,1,C'), \;\;\text{\rm where}\;\;  C'\in \Lambda', \;\; C'^2=-4,\\
&& d =(1,1,D'), \;\;\text{\rm where}\;\; D'\in \Lambda', \;\; D'^2= -6,\\
&& \langle A', C'\rangle = -2, \ \ \langle A', D'\rangle = -4, \ \ \langle C', D'\rangle = -3.
\end{eqnarray*}
Note that $A', B' (=0), C', T'$ define a root lattice $A_4$ in $\Lambda'$ in the sense of the paper~\cite{MR913200}; that is, the following Leech roots with respect to $w_0$
$$
(2,1,A'), \;\; (-1,1,0), \;\; (1,1,C'),  \;\; (2,1,D')
$$
generate a root lattice in $U'\oplus \Lambda'$.  It follows from Lemma 6.1 in~\cite{MR913200} that $\Aut(\DDD)$ acts transitively on the set of root lattices of type $A_4$, where $\DDD$ is the Conway chamber with respect to  the Weyl vector
$w_0=(1,0,0) \in U'\oplus \Lambda'$.  Since $\Aut(\DDD)$ fixes $w_0$, we may
assume that $A',B',C',D'$ coincide with $A,B,C,D$ given in (\ref{XYZ1}).
Thus we have shown that the embedding of $S_X$ into $L$ is the same one given in (\ref{xyz1}) and hence $h_F = w'$.
\end{proof}

\begin{remark}\label{weyl-3}
Let $r = (1,1, 2\nu_K)$ and $r'= (1,1, A-2\nu_K)$ be Leech roots as in the proof of Lemma~\ref{252}.  
In the proof of Lemma~\ref{weyl3}, we showed that $w''= a-b$.
Hence we have 
$$w' = w - w'' = (1,0,0) + (2,1,A) +(-1,1,0) = (2,2,A) = r + r'.$$
Thus we have $w' = r + r'$ and $\langle r, r'\rangle = 3$.
This corresponds to the fact that
the pullback of the tangent line of the Fermat sextic curve $C_F$ at an $\F_{25}$-rational point 
under the degree two map 
$\pi_F: X \to \P^2$ splits into two smooth rational curves
meeting at one point with multiplicity $3$.
\end{remark}

We know that the projective automorphism group $\Aut(X, w')$ is
a central extension of $\PGU(3, \F_{25})$ by the cyclic group of order $2$ generated by the deck-transformation of $X$ over $\P^2$.  Here we show that the subgroup 
${\PSU}(3,\F_{25})$ of index $6$ acts on $X$ by using the Torelli theorem for supersingular $K3$ surfaces.

\begin{proposition}\label{group3} 
The group ${\PSU}(3,\F_{25})$ acts on $X$ 
by automorphisms.
\end{proposition}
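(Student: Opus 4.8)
The plan is to exhibit $\PSU(3,\F_{25})$ (equivalently, $\SU(3,\F_{25})/\{\text{center}\}$, a simple group of order $126000$) as a subgroup of $\aut(\Nef(X))\cap G_X = \Aut(X)$ via the Torelli theorem for supersingular $K3$ surfaces recalled in Section~\ref{subsec:period}. Concretely, I would produce a subgroup $G\subset\OG^+(S_X)$ isomorphic to $\PSU(3,\F_{25})$ which (a) preserves $\Nef(X)$ and (b) fixes the period $\period_X$; then $G$ acts on $X$ by automorphisms.

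First I would construct the abstract action of $\PSU(3,\F_{25})$ on $S_X$ from the geometry already in hand. The unitary group $\PGU(3,\F_{25})$ acts on the Fermat sextic $C_F\subset\P^2$, hence on the double plane $(X,h_F)=(X,w')$, permuting the $252$ $h_F$-lines $R_{252}$; this gives a homomorphism $\PGU(3,\F_{25})\to\OG(S_X)$ whose image lies in $\aut(D_0)=\Aut(X,h_F)$, a central extension of $\PGU(3,\F_{25})$ by $\Z/2\Z$ generated by the deck transformation~\eqref{eq:flip}. Restricting to the simple subgroup $\PSU(3,\F_{25})\subset\PGU(3,\F_{25})$ of index $6$, I get an injection $G\cong\PSU(3,\F_{25})\inj\OG^+(S_X)$; it is injective because $\PSU(3,\F_{25})$ is simple and the composite with the projection onto $\PGU(3,\F_{25})$ has image of order $126000$. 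Since $G\subset\Aut(X,h_F)$ already, $G$ preserves $h_F$ and hence $\Nef(X)$, so condition (a) is automatic.

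The remaining point, and the one I expect to be the main obstacle, is condition (b): $\period_X^g=\period_X$ for every $g\in G$. By the recipe at the end of Section~\ref{sec:SX}, for each $g$ this amounts to computing the induced action $\bar g\in\OG(Q_0)\subset\GL_2(\F_5)$ via~\eqref{eq:barg} and checking $\bar g$ lies in the distinguished order-$6$ subgroup stabilizing $\III_{\pm}$. Rather than doing this element by element, I would argue representation-theoretically: the homomorphism $G\to\OG(q_S)$ has image a subgroup of the order-$12$ group $\OG(q_S)$; since $G\cong\PSU(3,\F_{25})$ is simple and non-abelian while $\OG(q_S)$ is of order $12$ (hence solvable), the image must be trivial. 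Thus every $g\in G$ acts trivially on $A_S=S_X\dual/S_X$, hence on $S_0$ and on $Q_0$, so certainly $\III_+^{\bar g}=\III_+$; therefore $G\subset G_X$. Combining, $G\subset\aut(\Nef(X))\cap G_X=\Aut(X)$, which proves the proposition.

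One should double check that the index-$6$ subgroup really lifts to a subgroup of $\Aut(X,h_F)$ acting effectively (i.e. that the central $\Z/2\Z$ does not get absorbed), but this is clear since $\PSU(3,\F_{25})$ is perfect: its preimage in the central extension $2.\PGU(3,\F_{25})$ splits as $\{\pm1\}\times\PSU(3,\F_{25})$ onto the relevant factor (more precisely, the commutator subgroup of the preimage maps isomorphically to $\PSU(3,\F_{25})$), giving the desired copy of $\PSU(3,\F_{25})$ inside $\Aut(X,h_F)\subset\Aut(X)$. Note the whole argument avoids any computer calculation: it uses only the geometric action on $C_F$, simplicity of $\PSU(3,\F_{25})$, and the fact (already established) that $\OG(q_S)$ has order $12$.
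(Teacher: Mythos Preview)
Your argument has a structural circularity. You start from the geometric action of $\PGU(3,\F_{25})$ on $(\P^2,C_F)$ and say this ``gives a homomorphism $\PGU(3,\F_{25})\to\OG(S_X)$''. But the action on $C_F$ does not produce such a homomorphism: an element of $\PGU$ permutes the $126$ tangent lines, hence the $126$ unordered pairs $\{\ell_P^+,\ell_P^-\}$, but not the $252$ individual $h_F$-lines that span $S_X$. To get an honest action on $S_X$ you must first lift to $X$, and what lifts is the central extension $\Aut(X,h_F)$, not $\PGU$ itself. Once you write ``$G\subset\Aut(X,h_F)$ already'', you have assumed the conclusion, and the subsequent Torelli argument is idle.

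The only substantive step left in your outline is the last paragraph: does the preimage of $\PSU(3,\F_{25})$ in the central extension split? Your justification---``$\PSU$ is perfect, so the commutator subgroup of the preimage maps isomorphically''---is false in general: take $\SL_2(\F_5)\to\PSL_2(\F_5)$, where $\PSL_2(\F_5)\cong\mathfrak{A}_5$ is perfect but the extension does not split and the commutator subgroup is all of $\SL_2(\F_5)$. The splitting here actually holds because the Schur multiplier of $\PSU(3,\F_{25})$ is $\Z/3\Z$ (so there is no nontrivial central extension by $\Z/2\Z$), but you would need to invoke that, not perfectness.

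The paper's proof avoids both issues by working entirely inside the Leech lattice. It identifies $\PSU(3,\F_{25})$ as the pointwise stabilizer in $\OG(\Lambda)$ of the four vectors $A,B,C,D$ generating $R_1$ (via the Higman--Sims group and the Atlas), extends the action trivially on $U$ to all of $L=U\oplus\Lambda$, and thus obtains a genuine embedding $\PSU(3,\F_{25})\hookrightarrow\OG(S_1)$ on the orthogonal complement $S_1=R_1^\perp\cong S_X$ with no geometric input. Because this copy of $\PSU$ fixes $R_1$ pointwise, it acts trivially on $R_1\dual/R_1\cong S_X\dual/S_X$ and hence preserves the period; and since it fixes the Weyl vector $w$, it preserves $w'$ and therefore $\Nef(X)$. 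Torelli then does real work. Your simplicity argument for the period (any map from a nonabelian simple group to the solvable $\OG(q_S)$ is trivial) is a valid and pleasant shortcut for that one step, but it cannot repair the missing construction of the action on $S_X$.
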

\begin{proof}
First we see that the point-wise stabilizer of $\{A, B, C, D\}$ of ${\OG}(\Lambda)$ is 
${\PSU}(3,\F_{25})$.
The point-wise stabilizer of the three points $\{A=4\nu_{\infty} + \nu_{\Omega}, B=0, D=4\nu_0 + \nu _{\Omega}\}$
is the Higman-Sims group 
$\mathord{\rm HS}$ (see Conway~\cite{MR0338152}, 3.5).  
It is known that there exist 352 vectors $C'$ in $\Lambda$ satisfying
$$A-C' \in \Lambda_6 \;\; {\rm  and} \;\; B-C', D-C' \in \Lambda_4.$$
Note that $C=2\nu_{K_0}$ is one of them.
Moreover they form 176 pairs $\{ C', D - C'\}$ (Conway~\cite{MR0338152}, 3.5). 
It follows from the table of maximal subgroups in Atlas~(page 80 of \cite{MR0827219}),  that the stabilizer of such a pair $\{C', D-C'\}$ in $\mathord{\rm HS}$ is ${\PSU}(3,\F_{25})\rtimes \Z/2\Z$ with index 176.  
Therefore the point-wise stabilizer of $\{A, B, C, D\}$ is ${\PSU}(3,\F_{25})$.
We consider ${\PSU}(3,\F_{25})$ as a subgroup of ${\OG}(U\oplus \Lambda)$ acting trivially on $U$.
The group ${\PSU}(3,\F_{25})$ preserves the projection $w'$ of the Weyl vector $w$ which is conjugate to an ample class of $X$
(Lemma~\ref{weyl3}).
On the other hand, ${\PSU}(3,\F_{25})$ acts on $R_1$ identically, and hence acts trivially on $R_1\dual/R_1 \cong S_X\dual/S_X$.  
This implies that ${\PSU}(3,\F_{25})$ preserves the period of $X$.
It now follows from the Torelli theorem for supersingular $K3$ surfaces due to Ogus~\cite{MR563467, MR717616} that ${\PSU}(3,\F_{25})$ 
can act on $X$ by automorphisms.
\end{proof}
\begin{remark}
By the direct calculation using the data of Section~\ref{subsec:D0} and~\eqref{eq:barg},
we can confirm that the image of $\Aut(X, D_0)$  by the natural homomorphism $\OG(S_X)\to \OG(q_{S_X})$
is equal to~\eqref{eq:six}, and hence is of order $6$.
Combining this fact with 
the proof of Proposition~\ref{group3},
we see that 
the kernel of $\Aut(X, D_0)\inj \OG(S_X) \to \OG(q_{S_X})$ is isomorphic to the simple group  ${\PSU}(3,\F_{25})$.
\end{remark}
\subsection{Proof of Theorem~\ref{thm:main}\,(2)}
Next we consider the following vectors in the Leech lattice $\Lambda$: 
\begin{equation}\label{XYZ2}
A = 4\nu_{\infty} + \nu_{\Omega},\;\; 
B = 0, \;\;
C = 2\nu_{K_0},\;\;
  D = \nu_{\Omega} - 4\nu_{\infty},
\end{equation}
where $K_0$ is an octad which does not contain $\infty$.
Consider  the vectors in $L = U \oplus \Lambda$ defined by
\begin{equation}\label{xyz2}
a = -(2, 1, A),\;\;  
b = (-1, 1, 0), \;\;
 c = (0, 1, C), \;\;
   d = (0,0, D).
\end{equation}
Obviously we have 
\begin{eqnarray*}
&& a^2= b^2 = -2, \;\; c^2 = d^2 =-4, \;\; \langle a, b \rangle = \langle b, c \rangle = -1, \\
&& \langle a, c \rangle = \langle b, d \rangle = 0, \;\;
 \langle a, d \rangle = 1, \;\;
 \langle c, d \rangle = -2.
\end{eqnarray*}
Let $R_2$ be the sublattice of $L$ generated by $a, b, c, d$.  
Note that the Gram matrix of 
$R_2$ is the same as the one given in~\eqref{eq:GramR}.
Moreover the alternating group $\mathfrak{A}_8$ of degree 8 acts on the set $\Omega = \{ \infty, 0, 1,..., 22\}$ such that it preserves the octad $K_0$ and fixes
the point $\infty$ (see Conway~\cite{MR0338152}).  This action can be extended to the one on 
$\Lambda$, and hence on $L = U\oplus \Lambda$
acting trivially on $U$.  
By definition, $\mathfrak{A}_8$ fixes $R_2$.
Let $S_2$ be the orthogonal complement of $R_2$ in $L$, 
on which $\mathfrak{A}_8$ acts.
Then $S_2$ is isomorphic to the N\'eron-Severi lattice $S_X$ of the supersingular $K3$ surface $X$ with Artin invariant $1$ in characteristic $5$.
\begin{lemma}\label{weyl}
Let $w'$ be the projection of the Weyl vector $w$ into $S_2\dual$.  Then $5w' \in S_2$ and $(5w')^2=60$.  
Moreover $5w'$ is conjugate to the class of an ample divisor on $X$
under the action of $\Wgr (S_2)$.
\end{lemma}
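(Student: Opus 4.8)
The plan is to mimic exactly the structure of the proof of Lemma~\ref{weyl3}. Let $w''$ denote the projection of the Weyl vector $w=(1,0,0)$ into $R_2\dual$, so that $w=w'+w''$ with $w'\in S_2\dual$ and $w''\in R_2\dual$. First I would compute the pairings $\langle w'', a\rangle$, $\langle w'', b\rangle$, $\langle w'', c\rangle$, $\langle w'', d\rangle$ directly from the defining formulas~\eqref{xyz2}: since $w=(1,0,0)$ in $L=U\oplus\Lambda$ and the norm of $(m,n,\lambda)$ is $2mn+\langle\lambda,\lambda\rangle$, we get $\langle w,(m,n,\lambda)\rangle=n$, hence $\langle w,a\rangle=-1$, $\langle w,b\rangle=1$, $\langle w,c\rangle=1$, $\langle w,d\rangle=0$. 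Since $w''$ is characterized inside $R_2\dual$ by having the same pairings with $a,b,c,d$ as $w$ does, I would solve the resulting linear system (using the inverse of the Gram matrix~\eqref{eq:GramR}) to express $w''$ as a rational combination of $a,b,c,d$; this determines $w''$ and hence $w'=w-w''$, and then $(w')^2=w^2-(w'')^2=-(w'')^2$ can be read off. The expectation, consistent with the value $60$ in the statement for $h_1$ in Theorem~\ref{thm:main}(2), is that $w''\notin R_2$ but $5w''\in R_2$, so $w'\notin S_2$ while $5w'\in S_2$, and $(5w')^2=25\cdot(w')^2=60$, i.e. $(w')^2=12/5$.

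For the integrality claim $5w'\in S_2$ (rather than merely $5w'\in S_2\dual$), I would observe that $w=(1,0,0)\in L$ is integral, so $5w=5w'+5w''\in 5L\subset L$; combined with $5w''\in R_2\subset L$ this gives $5w'\in L\cap(S_2\otimes\Q)=S_2$, using that $S_2$ is primitively embedded in $L$. (Equivalently, the image of $w'$ in $A_{S_2}\cong(\Z/5\Z)^2$ is $5$-torsion, which is automatic.)

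For the last assertion — that $5w'$ is $\Wgr(S_2)$-conjugate to an ample class — I would argue exactly as in Lemma~\ref{weyl3}: $5w'$ lies in the positive cone $\PPP_{S_2}$ since $(5w')^2=60>0$ and it lies in $\closure\PPP_L$ (being a positive multiple of the projection of the norm-zero vector $w\in\closure\PPP_L$). For any $(-2)$-vector $r\in S_2$, the embedding $S_2\subset L$ makes $r$ a $(-2)$-vector of $L$, and since $w$ is a Weyl vector, $\langle r,w\rangle\ne 0$ for every $r\in\RRR_L$ with... more precisely $\langle r,w\rangle\ge 1$ for the $r$ on one side; in any case $\langle r,5w'\rangle=5\langle r,w'\rangle=5\langle r,w\rangle\ne 0$ because $r\perp R_2$ forces $\langle r,w''\rangle=0$. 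Hence $5w'$ lies in the interior of some $\RRR_{S_2}\sphyp$-chamber, and since $\Wgr(S_2)$ acts transitively on these chambers with $\Nef(X)$ among them (Rudakov--Shafarevich), $5w'$ is $\Wgr(S_2)$-conjugate into $\Nef(X)$; as it is in the open positive cone and on no $(-2)$-wall, the conjugate is ample. The only mild obstacle is the bookkeeping in inverting~\eqref{eq:GramR} and confirming the denominator is exactly $5$; everything else is a verbatim transcription of the earlier argument.
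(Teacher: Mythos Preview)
Your proposal is correct and follows essentially the same route as the paper. The paper writes $w=w'+w''$, computes $w''=(6a-5b-c+2d)/5$ from the pairings $\langle w,a\rangle=-1$, $\langle w,b\rangle=\langle w,c\rangle=1$, $\langle w,d\rangle=0$ (exactly the values you found), reads off $(w'')^2=-12/5$ and hence $(w')^2=12/5$, observes $5w''\in R_2$ so $5w'\in S_2$, and refers to the proof of Lemma~\ref{weyl3} for the ampleness statement; your primitivity justification for $5w'\in L\cap(S_2\otimes\Q)=S_2$ is in fact slightly more explicit than what the paper records.
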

\begin{proof}
Write $w = w' + w''$ where $w''$ is the projection of $w$ into $R_2\dual$.
We  see that $w'' = (6a -5b-c +2d)/5$ and $(w'')^2 = -12/5$.  
Since $5w'' \in R_2$ and $w^2 = 0$, we have $5w' \in S_2$ and $(w')^2 = 12/5$.
The proof of the last assertion is the same as that of Lemma~\ref{weyl3}.
\end{proof}
\begin{lemma}\label{168}
There exist exactly $168$ Leech roots which are orthogonal to $R_2$, and $\mathfrak{A}_8$ acts transitively on these Leech roots.
\end{lemma}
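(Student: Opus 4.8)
The plan is to follow the proof of Lemma~\ref{252} almost word for word. Let $r$ be a Leech root orthogonal to $R_2$, that is, perpendicular to each of $a,b,c,d$. Exactly as before, $\langle r,b\rangle=0$ forces $r=(1,1,\lambda)$ with $\lambda\in\Lambda_4$, and then the remaining conditions $\langle r,a\rangle=\langle r,c\rangle=\langle r,d\rangle=0$ become
$$
\langle\lambda,A\rangle=-3,\qquad\langle\lambda,C\rangle=-1,\qquad\langle\lambda,D\rangle=0,
$$
where now $A=4\nu_\infty+\nu_\Omega$, $C=2\nu_{K_0}$ and $D=\nu_\Omega-4\nu_\infty$. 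First I would run through the three shapes of $\Lambda_4$ in Proposition~\ref{prop:leech2}, using Proposition~\ref{prop:leech1} to fix signs and supports just as in the proof of Lemma~\ref{252}. For shape $(\pm 4^{2},0^{22})$, the condition $\langle\lambda,A\rangle=-3$ forces $\lambda=4\nu_\infty+4\nu_j$, but then $\langle\lambda,D\rangle=1\ne 0$; for shape $(\pm 2^{8},0^{16})$ it forces $\lambda=2\nu_K$ with $K$ an octad containing $\infty$, but then $\langle\lambda,D\rangle=-1\ne 0$; and in shape $(\pm 3,\pm 1^{23})$ the entry $-3$ is excluded as in Lemma~\ref{252} (the only surviving candidate $\nu_\Omega-4\nu_j$ with $j\ne\infty$ has $\langle\lambda,D\rangle=-2\ne 0$). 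So $\lambda$ has entry $+3$, and then $\langle\lambda,A\rangle=-3$ forces $\lambda=4\nu_\infty+\nu_\Omega-2\nu_K=A-2\nu_K$ with $K$ an octad and $\infty\in K$; for such $\lambda$ one checks that $\langle\lambda,D\rangle=0$ is automatic (this is why the extra constraint that appeared in Lemma~\ref{252} is absent here), while $\langle\lambda,C\rangle=-1$ is equivalent to $|K\cap K_0|=2$. Hence the Leech roots orthogonal to $R_2$ are precisely the vectors $(1,1,A-2\nu_K)$ with $K$ an octad satisfying $\infty\in K$ and $|K\cap K_0|=2$.

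Next I would count such octads, reusing the bookkeeping in the proof of Lemma~\ref{252}. For each of the ${8\choose 2}=28$ pairs $\{a,b\}\subset K_0$ there are $21$ octads through $\{\infty,a,b\}$; among them, those meeting $K_0$ in more than $\{a,b\}$ meet it in exactly four points (octad intersection numbers are $0,2,4,8$ and $\infty\notin K_0$), and each such octad is determined by the pair it contains in $K_0\setminus\{a,b\}$, so there are ${6\choose 2}=15$ of them. Hence $21-15=6$ octads through $\{\infty,a,b\}$ satisfy $K\cap K_0=\{a,b\}$, and altogether there are $28\times 6=168$ admissible octads, hence $168$ Leech roots orthogonal to $R_2$.

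Finally, since $\mathfrak{A}_8$ fixes $\infty$ (hence $A$) and acts on $K_0$ as the alternating group on its eight points, it permutes these $168$ octads and is transitive on the $28$ pairs $\{a,b\}\subset K_0$; so transitivity on the $168$ octads reduces to the statement that the setwise stabilizer $H$ of one pair $\{a,b\}$, a group of order $720$ isomorphic to $\mathfrak{S}_6$, is transitive on the six octads $K$ through $\{\infty,a,b\}$ with $K\cap K_0=\{a,b\}$. I expect this last point to be the main obstacle. One clean route is to show that the incidence between these six octads and the fifteen points of $\Omega\setminus(\{\infty\}\cup K_0)$ realizes the set of fifteen points as the set of $2$-element subsets of the six octads (each of the fifteen points lying on exactly two of them), which forces the injection $H\cong\mathfrak{S}_6\hookrightarrow\mathfrak{S}_6$ given by the action on the six octads to be onto, hence transitive; alternatively, the transitivity of $\mathfrak{A}_8$ on the $168$ octads is readily checked by computer (cf.~Section~\ref{sec:bycomputer}). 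Granting this, the lemma follows.
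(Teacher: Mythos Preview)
Your characterization of the Leech roots orthogonal to $R_2$ and your count of $168$ are correct and essentially identical to the paper's argument; the case analysis, the verification that $\langle\lambda,D\rangle=0$ is automatic for $\lambda=A-2\nu_K$ with $\infty\in K$, and the bookkeeping $\binom{8}{2}\times 6=168$ all match.

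For the transitivity of $\mathfrak{A}_8$, however, you take a different route. The paper fixes one admissible octad $K$ and argues that its stabilizer in $\mathfrak{A}_8$ is $\mathfrak{S}_5$ (via the orbit partition of $\Omega$ into pieces of sizes $1,2,5,6,10$), so by orbit--stabilizer the orbit has size $|\mathfrak{A}_8|/|\mathfrak{S}_5|=168$ and transitivity follows in one stroke. Your approach instead uses transitivity of $\mathfrak{A}_8$ on the $28$ pairs in $K_0$ and then tries to show that the pair-stabilizer $H\cong\mathfrak{S}_6$ is transitive on the six octads over a fixed pair. Your proposed incidence argument is in fact valid: through any four points $\{\infty,a,b,x\}$ with $x$ among the $15$ points of $\Omega\setminus(\{\infty\}\cup K_0)$ there pass exactly five octads, three of which meet $K_0$ in four points and two in the pair $\{a,b\}$, so every such $x$ lies on exactly two of your six octads; conversely any two of the six meet in $\{\infty,a,b,x\}$ for a unique such $x$. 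This gives a bijection between the fifteen points (on which $H\cong\mathfrak{S}_6\cong\mathrm{GL}_4(\F_2)$ acts faithfully, since they are the nonzero vectors of the complementary $\F_2^4$) and the fifteen $2$-subsets of the six octads, so the homomorphism $H\to\mathfrak{S}_6$ on the six octads is injective, hence bijective, hence transitive. So your argument can be completed, but it is more elaborate than the paper's; the paper's direct identification of the point-stabilizer as $\mathfrak{S}_5$ is what you are missing, and it shortcuts all of this. Note also that this lemma sits in the paper's ``without computer'' section, so your computer alternative is not in keeping with the intent there.
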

\begin{proof}
By an argument similar to the proof of Lemma~\ref{252}, 
we  see that
the desired Leech roots correspond to $(-4)$-vectors 
$$
4\nu_{\infty} + \nu_{\Omega} - 2\nu_K
$$ 
in $\Lambda$, where
$K$ are  octads which satisfy $K \ni \infty$ and $|K \cap K_0| = 2$.  We count the number of such octads $K$.
Let $a_1, a_2$ be in $K_0$.  Then the number of octads containing three points $\infty, a_1, a_2$ is 21 (see Conway~\cite{MR0338152}, Theorem 11).
Take two points $a_3, a_4 \in K_0\setminus \{a_1, a_2\}$.  Then there exists exactly one octad containing $5$ points $\infty, a_1, a_2, a_3, a_4$.
Thus the number of octads $K$ containing $\infty, a_1, a_2$ and satisfying $K\cap K_0 =\{a_1, a_2\}$ is $21 - {6\choose 2}  = 6$.  Therefore
the number of octads $K$ containing $\infty$ and satisfying $|K\cap K_0| = 2$ is ${8\choose 2}\times 6 = 168$.
\par
Now take  such an octad $K$.  Then
the stabilizer subgroup of $K$ in $\mathfrak{A}_8$ is the symmetry group $\mathfrak{S}_5$ of degree 5 because it has five orbits of size $1, 2, 5, 6, 10$;
that is, 
$$
\{ \infty\}, \ \{ K\cap K_0 \}, \ \{ K_0\setminus ((K\cap K_0)\cup \{\infty\})\}, \{ K\setminus (K\cap K_0)\}, \ 
\{\Omega \setminus (K\cup K_0)\}.
$$
Since the index of $\mathfrak{S}_5$ in $\mathfrak{A}_8$ is 168, we have the second assertion.
\end{proof}

\begin{lemma}
The group $\mathfrak{A}_8$ acts on $X$ by automorphisms.
\end{lemma}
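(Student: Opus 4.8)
The plan is to derive this from the Torelli theorem for supersingular $K3$ surfaces due to Ogus~\cite{MR563467,MR717616}, exactly along the lines of the proof of Proposition~\ref{group3}. First I would record how $\mathfrak{A}_8$ acts on the lattices. The action of $\mathfrak{A}_8$ on $\Omega$ used in~\eqref{XYZ2} and~\eqref{xyz2} fixes $\infty$ and preserves the octad $K_0$, hence fixes $\nu_{\infty}$, $\nu_{K_0}$ and $\nu_{\Omega}=\sum_{i\in\Omega}\nu_i$, and therefore fixes each of $A,B,C,D$ in~\eqref{XYZ2}. Extending this action to $L=U\oplus\Lambda$ trivially on $U$, we see that $\mathfrak{A}_8$ fixes the generators $a,b,c,d$ of $R_2$, hence fixes $R_2$ pointwise, hence preserves $S_2=R_2^{\perp}$; under an identification $S_2\isom S_X$ this gives a homomorphism $\mathfrak{A}_8\to\OG^+(S_X)$, whose image we denote by $G'$ (it lies in $\OG^+$ since $\mathfrak{A}_8$ fixes $w=(1,0,0)\in\closure{\PPP}_L$ and so preserves $\PPP_{S_X}$).

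Next I would verify the two hypotheses of the identification $\Aut(X)=\aut(\Nef(X))\cap G_X$ recalled in Section~\ref{subsec:period}. Since $\mathfrak{A}_8$ acts identically on $R_2$, it acts identically on $R_2\dual$, hence trivially on the discriminant group $R_2\dual/R_2\cong S_X\dual/S_X$, hence trivially on $S_0=5S_X\dual/5S_X$, and in particular fixes the period $\period_X\subset S_0\tensor k$; thus $G'\subset G_X$. For the nef cone: $\mathfrak{A}_8$ fixes $w$, hence fixes the projection $w'$ of $w$ to $S_2\dual$, hence fixes the class $h:=5w'\in S_2$, which by Lemma~\ref{weyl} satisfies $h^2=60$ and is $\Wgr(S_X)$-conjugate to an ample class. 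As in the proof of Lemma~\ref{weyl3}, every $r\in\RRR_{S_X}$ satisfies $\langle r,w'\rangle=\langle r,w\rangle\ne0$, so $w'$ lies in the interior of a unique $\RRR_{S_X}\sphyp$-chamber $C$, which $G'$ therefore preserves. Writing $C=\gamma(\Nef(X))$ with $\gamma\in\Wgr(S_X)$ (possible since $\Nef(X)$ is a fundamental domain for $\Wgr(S_X)$) and using $\Wgr(S_X)\subset G_X$ (each reflection $s_r$ with $r\in\RRR_{S_X}$ acts trivially on $S_X\dual/S_X$), we obtain $\aut(C)\cap G_X=\gamma\bigl(\aut(\Nef(X))\cap G_X\bigr)\gamma\inv=\gamma\,\Aut(X)\,\gamma\inv$.

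It follows that $\gamma\inv G'\gamma\subset\Aut(X)$. Finally, the homomorphism $\mathfrak{A}_8\to\OG^+(S_X)$ is injective: its kernel is normal in the simple group $\mathfrak{A}_8$ and cannot be all of it, since $\mathfrak{A}_8$ acts transitively, hence nontrivially, on the $168$ Leech roots orthogonal to $R_2$ (Lemma~\ref{168}). Hence $\mathfrak{A}_8$ is isomorphic to a subgroup of $\Aut(X)$, which is the assertion. The step requiring care is the passage from ``$\mathfrak{A}_8$ fixes a class that is only $\Wgr(S_X)$-conjugate to an ample class'' to ``$G'$ preserves a $\Wgr(S_X)$-conjugate of $\Nef(X)$'': this relies both on $w'$ avoiding every $(-2)$-wall (so the chamber $C$ is unambiguous) and on $\Wgr(S_X)\subset G_X$ (so the conjugating element $\gamma$ does not spoil membership in $G_X$). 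Everything else is a direct transcription of Proposition~\ref{group3}.
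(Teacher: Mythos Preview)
Your proof is correct and follows precisely the approach indicated by the paper, which simply says ``The proof is similar to that of Lemma~\ref{group3}'' (i.e., Proposition~\ref{group3}). You have in fact filled in more detail than the paper does---notably the explicit conjugation by $\gamma\in\Wgr(S_X)$ together with the observation $\Wgr(S_X)\subset G_X$, and the injectivity of $\mathfrak{A}_8\to\OG^+(S_X)$ via simplicity---none of which is spelled out in the paper but all of which is correct and in the same spirit.
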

\begin{proof}
The proof is similar to that of Lemma~\ref{group3}.
\end{proof}
\noindent
Finally the $168$ Leech roots are the classes of the $168$ smooth rational curves on $X$ because
Leech roots have the minimal degree 1 with respect to the Weyl vector $w$.
Thus we have finished the proof of Theorem~\ref{thm:main}\,(2).
\begin{remark}
Let $r=(1,1,4\nu_{\infty} + \nu_{\Omega} - 2\nu_K), \ 
r'=(1,1,4\nu_{\infty} + \nu_{\Omega} - 2\nu_{K'})$ be distinct two Leech roots in
Lemma~\ref{168}.  Then $\langle r, r'\rangle = 0$ or $1$ if and only if
 $|K\cap K'| =4$ or $2$
respectively.  Moreover we see that  
there exist exactly $72$ Leech roots $r'$ in Lemma~\ref{168} with $\langle r, r'\rangle =1$ (see Proposition~\ref{Shimada168}).
\end{remark}
\begin{remark}\label{weyl-4}
In both cases (1) and (2) in Theorem~\ref{thm:main}, the octads $K$ satisfying $\infty \in K$ and $|K\cap K_0| = 2$ appear.
In case (1), $K$ satisfies one more condition that $K$ does not contain $0$.
Here we discuss the remaining octads $K$; that is, $K$ contains $\infty, 0$ and satisfies $|K\cap K_0|=2$.
We put 
$$
r = (2,2, \lambda), \ \lambda = 2\nu_K + \nu_{\Omega} - 4\nu_0,
$$
where $K$ is an octad with $K \ni \infty$, $K\ni 0$ and $| K \cap K_0| =2$.  
Then $r^2 = -2$ and
$r \in R_1^{\perp} = S_1$.  
Obviously we have $\langle r, w' \rangle = \langle r, w \rangle = 2$.
There exist exactly 42 octads $K$ satisfying $K \ni \infty$, $K\ni 0$ and $| K \cap K_0| =2$.
Recall that $w' = (2,2, A)=(2,2,4\nu_{\infty}+\nu_{\Omega})$ (Remark \ref{weyl-3}).
%
For each root $r$ from the above $42$ roots, put
$$r' = 2w' - r = (2,2, 8\nu_{\infty} + 4\nu_0 + \nu_{\Omega} - 2\nu_K).$$
Then $(r')^2 =-2$ and $r' \in R_1^{\perp} = S_1$.  Thus the class $r + r'$ corresponds to the pullback of a conic
on ${\P}^2$ tangent to the Fermat sextic $C_F$ at six points (see Proposition~\ref{Shimada42}).
\end{remark}
\subsection{Proof of Theorem~\ref{thm:main}\,(3)}
Finally we consider the following vectors in the Leech lattice $\Lambda$: 
\begin{equation}\label{XYZ3}
\renewcommand{\arraystretch}{1.4}
\begin{array}{l}
A = 4\nu_{\infty} + \nu_{\Omega},\;\;
B = 0,\;\; 
C = 8\nu_{\infty}, \\
D = 2(\nu_{\infty} + \nu_0 +\nu_1 +\nu_2) - 2(\nu_{3} +
\nu_5+\nu_{14} + \nu_{17}).
\end{array}
\end{equation}
Here $K_0 =\{\infty, 0, 1, 2, 3, 5, 14, 17\}$ is an octad (see Todd~\cite{MR0202854}).
Consider  the vectors in $L = U \oplus \Lambda$ defined by
\begin{equation}\label{xyz3}
a = -(2, 1, A),\;\;  b = (-1, 1, 0), \;\;  c= (1, 2, C), \;\; d = (0,0, D).
\end{equation}
Obviously we have 
\begin{eqnarray*}
&&a^2= b^2 = -2, \;\; c^2 = d^2 =-4, \;\; \langle a, b \rangle = \langle b, c \rangle = -1,\\
&&\langle a, c \rangle = \langle b, d \rangle =0, \;\; \langle a, d \rangle = 1, \;\;
\langle c, d \rangle =-2.
\end{eqnarray*}
Let $R_3$ be the sublattice of $L$ generated by $a, b, c, d$.  
Then the Gram matrix of $R_3$ is the same as the one given in~\eqref{eq:GramR}.
Note that a subgroup group $(\Z/2\Z)^4 \rtimes (\Z/3\Z \times \mathfrak{S}_4)$ of $M_{23}$ acts on the set $\Omega = \{ \infty, 0, 1,..., 22\}$ such that it preserves 
the sextet of tetrads determined by $\{\infty, 0, 1, 2\}$, preserves the set $\{ 0,1,2\}$
and the octad $K_0$, and fixes
the point $\infty$ (see Conway~\cite{MR0338152}).  This action can be extended to the one on 
$\Lambda$, and hence on $L = U\oplus \Lambda$
acting trivially on $U$.  
Let $S_3$ be the orthogonal complement of $R_3$ in $L$.
Then $S_3$ is isomorphic to the N\'eron-Severi lattice $S_X$ of the supersingular $K3$ surface $X$ with Artin invariant $1$ in characteristic $5$.
\begin{lemma}\label{weyl2}
Let $w'$ be the projection of the Weyl vector $w$ into $S_3\dual$.  
Then $5w' \in S_3$ and $(5w')^2=80$.  
Moreover $w'$ is conjugate to the class of an ample divisor on $X$ under the action of $\Wgr (S_3)$.
\end{lemma}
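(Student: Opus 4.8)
The plan is to follow the arguments for Lemma~\ref{weyl3} and Lemma~\ref{weyl} step by step. First I would decompose the Weyl vector $w=(1,0,0)$ as $w=w'+w''$, where $w'$ is its orthogonal projection into $S_3\tensor\Q$ and $w''$ its orthogonal projection into $R_3\tensor\Q$. From the decomposition~\eqref{decomp} the form on $L=U\oplus\Lambda$ pairs $(m,n,\lambda)$ with $(m',n',\lambda')$ to $mn'+nm'+\langle\lambda,\lambda'\rangle$; combined with the definitions~\eqref{xyz3} of $a,b,c,d$ this makes the pairings $\langle w,a\rangle,\langle w,b\rangle,\langle w,c\rangle,\langle w,d\rangle$ immediate, and they come out to $-1,1,2,0$. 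Writing $w''=x_1a+x_2b+x_3c+x_4d$ and imposing $\langle w'',e\rangle=\langle w,e\rangle$ for $e\in\{a,b,c,d\}$ gives a $4\times 4$ linear system whose coefficient matrix is precisely the Gram matrix~\eqref{eq:GramR}; its solution is $5w''=6a-4b-3c+3d\in R_3$. Hence $5w'=5w-5w''\in L$, and since $S_3$ is primitively embedded in $L$ and $w'\in S_3\tensor\Q$, we conclude $5w'\in S_3$.

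Next I would compute the norm: since $\langle w',w''\rangle=0$ and $(w'')^2=\sum_i x_i\langle w,e_i\rangle=-16/5$, the relation $0=w^2=(w')^2+(w'')^2$ yields $(w')^2=16/5$, so $(5w')^2=80$. For the second assertion I would repeat verbatim the argument of Lemma~\ref{weyl3}: if $r\in S_3$ is a $(-2)$-vector, then $r$ is a $(-2)$-vector of $L$, and since $r\perp R_3$ we get $\langle r,w''\rangle=0$, hence $\langle r,w'\rangle=\langle r,w\rangle\ne 0$ because $w$ is a Weyl vector and $\langle w\rangle\sperp/\langle w\rangle\cong\Lambda$ has no roots. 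Thus $w'$ lies on no wall $(r)\sperp$ with $r\in\RRR_{S_3}$, so it lies in the interior of some $\RRR_{S_3}\sphyp$-chamber; since these chambers form a single $\Wgr(S_3)$-orbit and $\Nef(X)$ is one of them, $w'$ is $\Wgr(S_3)$-conjugate to an ample class.

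I do not expect any real obstacle here, since the geometric part of the statement is identical to Lemmas~\ref{weyl3} and~\ref{weyl} and the only computational input is solving one small linear system over the fixed Gram matrix~\eqref{eq:GramR}. The sole point deserving an extra line of justification, implicit in the earlier proofs, is that $w'$ must lie in the chosen positive cone $\PPP_{S_3}$: this follows because $(w'+t w'')^2=(16/5)(1-t^2)\ge 0$ for $t\in[0,1]$, so the segment joining $w'$ to $w$ stays in the closed positive cone of $L$, forcing $w'$ and $w$ into the same connected component of $\{x^2>0\}$.
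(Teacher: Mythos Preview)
Your proposal is correct and follows essentially the same approach as the paper: decompose $w=w'+w''$, solve for $w''=(6a-4b-3c+3d)/5$ against the Gram matrix~\eqref{eq:GramR}, read off $(w'')^2=-16/5$ and hence $(5w')^2=80$, and then invoke the argument of Lemma~\ref{weyl3} for the ampleness. Your added justification that $w'$ lies in the correct positive cone (via the segment $w'+tw''$) is a point the paper leaves implicit, so your write-up is in fact slightly more complete.
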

\begin{proof}
Write $w = w' + w''$ where $w'' \in R_3\dual$.  Then $w'' = (6a -4b-3c +3d)/5$ and $(w'')^2 = -16/5$.  
Since $5w'' \in R_3$ and $w^2 = 0$, we have  $5w' \in S$ and $(w')^2 = 16/5$.
The proof of the last assertion is the same as that of Lemma~\ref{weyl3}.
\end{proof}
\begin{lemma}\label{48}
There exist exactly $96$ Leech roots which are orthogonal to $R_3$.
\end{lemma}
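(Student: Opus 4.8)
The plan is to imitate the proofs of Lemma~\ref{252} and Lemma~\ref{168}, which reduce the count of Leech roots orthogonal to a rank-$4$ sublattice of $L$ to a combinatorial problem about octads in the Golay code. First I would note that a Leech root orthogonal to $R_3$ must in particular be orthogonal to $b=(-1,1,0)$, and the computation in the proof of Lemma~\ref{252} shows that any Leech root $r$ with $\langle r,b\rangle=0$ has the form $r=(1,1,\lambda)$ with $\lambda\in\Lambda_4$. The remaining three conditions $\langle r,a\rangle=\langle r,c\rangle=\langle r,d\rangle=0$ translate, via~\eqref{xyz3}, into
\begin{equation*}
\langle\lambda,A\rangle=-3,\qquad \langle\lambda,C\rangle=-2,\qquad \langle\lambda,D\rangle=-2
\end{equation*}
(the middle equation coming from $c=(1,2,C)$: $\langle r,c\rangle=2+1+\langle\lambda,C\rangle=0$, so $\langle\lambda,C\rangle=-3$; I will need to recompute this coefficient carefully). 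Then I run through the explicit list of $\Lambda_4$ in Proposition~\ref{prop:leech2}, namely the shapes $(\pm4^2,0^{22})$, $(\pm3,\pm1^{23})$, $(\pm2^8,0^{16})$, eliminating the first shape and the $(-3,\pm1^{23})$ subcase exactly as in Lemma~\ref{252} using the condition $\langle\lambda,A\rangle=-3$ together with $A=4\nu_\infty+\nu_\Omega$, and reducing the surviving cases $\lambda=2\nu_K$ and $\lambda=4\nu_\infty+\nu_\Omega-2\nu_K$ to a constraint on an octad $K$ with $\infty\in K$.

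The key step is then to extract the precise combinatorial condition on $K$ from $\langle\lambda,C\rangle$ and $\langle\lambda,D\rangle$ with $C=8\nu_\infty$ and $D=2(\nu_\infty+\nu_0+\nu_1+\nu_2)-2(\nu_3+\nu_5+\nu_{14}+\nu_{17})$, and to count the octads satisfying it. Since $C$ is supported only at $\infty$, the condition $\langle\lambda,C\rangle=\text{const}$ is automatic once $\infty\in K$ (for both forms of $\lambda$), so the genuine constraint is imposed by $D$. Writing $T_1=\{\infty,0,1,2\}$ and $T_2=\{3,5,14,17\}$, the two complementary tetrads of the sextet inside the octad $K_0$, the condition $\langle\lambda,D\rangle=-2$ becomes a linear equation in $|K\cap T_1|$ and $|K\cap T_2|$. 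I expect this to single out octads $K$ with $\infty\in K$ and a prescribed pair of intersection numbers with $T_1$ and $T_2$; the count is then a standard Steiner-system computation as in the last paragraphs of Lemmas~\ref{252} and~\ref{168}, using that an octad meets a tetrad of a sextet in $0$, $1$, $2$ or $4$ points and the known numbers of octads through a given set of points (Conway~\cite{MR0338152}, Theorem~11). Adding the two families of $\lambda$ (the $2\nu_K$ family and the $A-2\nu_K$ family) should give $2\times48=96$.

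The main obstacle I anticipate is bookkeeping: getting the three constants on the right-hand sides of the orthogonality conditions exactly right (the $c=(1,2,C)$ vector has the unusual pairing $\langle r,c\rangle=2m+n+\langle\lambda,C\rangle$, unlike the $(0,1,C)$ used in Theorems~\ref{thm:main}(1) and (2)), and then correctly enumerating octads by their intersection pattern with the sextet tetrads $T_1,T_2$. A secondary subtlety is to check that the two families $\{(1,1,2\nu_K)\}$ and $\{(1,1,A-2\nu_K)\}$ are disjoint and each contributes exactly $48$, rather than, say, one family being empty and the other contributing $96$; this is settled by comparing the value of $\langle\lambda,A\rangle$ on the two shapes, exactly as in Lemma~\ref{252}. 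Once the count of $96$ is in hand, the identification of these roots with smooth rational curves on $X$ is immediate since Leech roots have degree $1$ with respect to the Weyl vector $w$, and $5w'$ is (conjugate to) an ample class by Lemma~\ref{weyl2}.
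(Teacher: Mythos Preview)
Your overall strategy is right, but there is a genuine gap in the core claim that the $96$ roots split as $48$ from the $2\nu_K$ family plus $48$ from the $A-2\nu_K$ family.

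First, two of your three orthogonality constants are wrong. From $c=(1,2,C)$ you correctly recompute $\langle\lambda,C\rangle=-3$, but from $d=(0,0,D)$ you should get $\langle r,d\rangle=\langle\lambda,D\rangle$, so the condition is $\langle\lambda,D\rangle=0$, not $-2$. More importantly, your assertion that ``the condition $\langle\lambda,C\rangle=\text{const}$ is automatic once $\infty\in K$ for both forms of $\lambda$'' is false and is exactly where the argument breaks. Since $C=8\nu_\infty$, the condition $\langle\lambda,C\rangle=-3$ says precisely $\lambda_\infty=3$. For $\lambda=2\nu_K$ with $\infty\in K$ one has $\lambda_\infty=2$, and for $\lambda=4\nu_\infty+4\nu_j$ one has $\lambda_\infty=4$; both are eliminated. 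Only $\lambda=A-2\nu_K$ with $\infty\in K$ survives. So your ``secondary subtlety'' is in fact the main point, and it is not settled by the $A$-condition (both shapes satisfy $\langle\lambda,A\rangle=-3$, as in Lemma~\ref{252}) but by the $C$-condition.

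Once you are down to the single family $\lambda=A-2\nu_K$, the condition $\langle\lambda,D\rangle=0$ becomes, with $T_1=\{\infty,0,1,2\}$ and $T_2=\{3,5,14,17\}$, the equation $|K\cap T_1|-|K\cap T_2|=2$. Combined with $\infty\in K$ and $|K\cap K_0|\in\{2,4\}$, this forces either $|K\cap K_0|=4$ with exactly two of $\{0,1,2\}$ in $K$, or $|K\cap K_0|=2$ with exactly one of $\{0,1,2\}$ in $K$. These are the two cases counted in the paper, giving $48+48=96$ octads from a \emph{single} Leech-vector family, not two.
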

\begin{proof}
By an argument similar to the proof of Lemma~\ref{252}, 
we  see that 
the desired Leech roots are 
$$
(1,1,A- 2\nu_K),
$$ 
where $K$ is an octad satisfying
one of the following conditions: 
\begin{itemize}
\item[(1)]
$|K \cap K_0| = 4$, $K\ni \infty$ and $K$ contains exactly 
two points of $\{0,1,2\}$,
\item[(2)] $|K \cap K_0| = 2$, $K\ni \infty$ and $K$ contains exactly 
one point of $\{0,1,2\}$.
\end{itemize}
We count the number of octads satisfying (1) or (2).
In case (1), 
there are $21$ octads containing fixed three points $\{\infty, 0, 1\}$ and among these 21 octads, five octads contain four points $\{\infty, 0,1,2\}$.
Thus for each two points from $\{ 0,1,2\}$, there exist exactly 16 octads, and the total is $16 \times 3 = 48$.  
In case (2), there are exactly $16$ octads $K$ satisfying  $K\cap K_0 = \{\infty, 0\}$
(see Conway~\cite{MR0338152}, Table 10.1).  Thus we have $48$ octads satisfying the condition (2).
\end{proof}

\begin{lemma}
The group $(\Z/2\Z)^4 \rtimes (\Z/3\Z \times \mathfrak{S}_4)$ acts on $X$ by automorphisms.
\end{lemma}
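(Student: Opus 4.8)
The plan is to argue exactly as in the proofs of Proposition~\ref{group3} and Lemma~\ref{168}, so that the assertion follows from Ogus' Torelli theorem for supersingular $K3$ surfaces. Write $G=(\Z/2\Z)^4 \rtimes (\Z/3\Z \times \mathfrak{S}_4)$ for the subgroup of $M_{23}$ that, acting on $\Omega=\P^1(\F_{23})$, fixes $\infty$, preserves the set $\{0,1,2\}$, preserves the octad $K_0=\{\infty,0,1,2,3,5,14,17\}$, and preserves the sextet determined by the tetrad $\{\infty,0,1,2\}$. Since $G\subset M_{23}\subset M_{24}$ fixes the Golay code, the permutation of the coordinates $\nu_i$ $(i\in\Omega)$ induced by $G$ preserves the Leech lattice $\Lambda$; hence $G$ acts on $L=U\oplus\Lambda$ trivially on $U$, fixing the Weyl vector $w=(1,0,0)$ and, being identified with a subgroup of $\OG(\Lambda)\subset\Aut(\DDD)\cong\Lambda\rtimes\OG(\Lambda)$, preserving the Conway chamber $\DDD$ and the positive cone.

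The first step is to check that $G$ fixes each of the vectors $a,b,c,d$ of~\eqref{xyz3}, hence fixes $R_3$ pointwise. This is immediate from~\eqref{XYZ3}: $A=4\nu_\infty+\nu_\Omega$ and $C=8\nu_\infty$ are fixed because $G$ fixes $\infty$ and permutes $\Omega$; $B=0$ is trivial; and $D$ is fixed because $G$ preserves the tetrad $\{\infty,0,1,2\}$, hence also the complementary tetrad $\{3,5,14,17\}=K_0\setminus\{\infty,0,1,2\}$ inside the octad $K_0$. Consequently $G$ acts on $S_3=R_3^{\perp}$, and via the fixed identification $S_3\cong S_X$ we obtain an action of $G$ on $S_X$. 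Because $G$ fixes $R_3$, hence $R_3\dual$, pointwise, it acts trivially on $R_3\dual/R_3$; chasing a glue vector $(x,y)\in L$ with $x\in S_3\dual$ and $y\in R_3\dual$ through $g\in G$ --- using $y^g=y$ and the bijectivity of the glue map $A_{S_3}\to A_{R_3}$ --- shows that $G$ acts trivially on $A_{S_3}\cong A_{S_X}$. Hence $G$ acts trivially on $S_0=5S_X\dual/5S_X$, and therefore preserves the period $\period_X$.

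Finally, by Lemma~\ref{weyl2} the projection $w'$ of $w$ into $S_3\dual$ is $G$-invariant and $\Wgr(S_3)$-conjugate to an ample class; conjugating $G$ by a suitable element of $\Wgr(S_X)$ (which lies in $G_X$, since $\Wgr(S_X)$ acts trivially on $A_{S_X}$) yields a subgroup of $\OG(S_X)$ that fixes an ample class, hence is contained in $\aut(\Nef(X))\cap G_X$, i.e.\ in $\Aut(X)$ by Ogus~\cite{MR563467, MR717616}; since this subgroup is isomorphic to $G$, the group $G$ acts on $X$ by automorphisms. The only point requiring genuine care --- and it is the same one as in the earlier cases --- is the verification that $G$ acts trivially on the discriminant form; everything else is a direct transcription of the arguments for parts (1) and (2) of Theorem~\ref{thm:main}.
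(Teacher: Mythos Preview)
Your proof is correct and follows exactly the approach the paper indicates: the paper's own proof consists of the single sentence ``The proof is similar to that of Lemma~\ref{group3},'' and you have faithfully expanded that reference by checking that $G$ fixes each of $A,B,C,D$ (hence $R_3$ pointwise), that it therefore acts trivially on $R_3\dual/R_3\cong S_X\dual/S_X$ and so preserves the period, and that it fixes the projection $w'$ which is $\Wgr(S_3)$-conjugate to an ample class, so Ogus' Torelli theorem applies. Your explicit observation that $\Wgr(S_X)$ acts trivially on the discriminant group (hence the conjugation step stays inside $G_X$) is a detail the paper leaves implicit, but it is correct and harmless.
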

\begin{proof}
The proof is similar to that of Lemma~\ref{group3}.
\end{proof}
\noindent
The 96 Leech roots are the classes of the 96 smooth rational curves on $X$ because
Leech roots have the minimal degree 1 with respect to the Weyl vector $w$.
Thus we have finished the proof of Theorem~\ref{thm:main}\,(3).
\par
\medskip
We denote by ${\TTT}$ the set of $96$ Leech roots in Lemma~\ref{48}.  
Let ${\TTT}_{ij}$ be the set of Leech roots which correspond to
the octads $K$ containing the two point $i, j$ ($i, j = 0,1,2)$ in the proof of Lemma~\ref{48}, case (1), 
and let ${\TTT}_i$ be the set
of all Leech roots corresponding to the octads $K$ 
containing the point $i$ ($i=0,1,2$) in the proof of Lemma~\ref{48}, case (2).
\begin{theorem}\label{48-16-3}
Each ${\TTT}_i$, ${\TTT}_{ij}$ consists of $16$ mutually orthogonal Leech roots.
Each Leech root  in ${\TTT}_i$ {\rm (}resp. ${\TTT}_{ij}${\rm)}  meets exactly $6$ Leech roots 
in ${\TTT}_j$ with $j\ne i$ 
{\rm (}resp. ${\TTT}_{kl}$ with  $(k,l)\ne  (i,j)${\rm)} with multiplicity $1$.
In particular, $\{{\TTT}_i, {\TTT}_j\}$ and $\{{\TTT}_{ij}, {\TTT}_{kl}\}$ form 
a $(16_6)$-configuration.  Moreover $\{ {\TTT}_i , {\TTT}_{jk} \}$  with $\{i,j,k\} = \{0,1,2\}$ 
is a $(16_{12})$-configuration and $\{ {\TTT}_i , {\TTT}_{ij} \}$ is a $(16_{4})$-configuration.
\end{theorem}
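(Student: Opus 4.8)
The plan is to reduce the statement to a count of intersection patterns of octads in the Steiner system $S(5,8,24)$. By the proof of Lemma~\ref{48}, every root of $\TTT$ has the form $r=(1,1,A-2\nu_K)$ with $A=4\nu_\infty+\nu_\Omega$ and $K$ an octad containing $\infty$; for two such roots attached to octads $K\ne K'$ one has $\lambda-\lambda'=2(\nu_{K'}-\nu_K)$. Since $K\triangle K'$ is a $\CCC$-set of size $16-2|K\cap K'|\in\{0,8,12,16\}$ and $\infty\in K\cap K'$, the only possibilities are $|K\cap K'|\in\{2,4\}$, and $2(\nu_{K'}-\nu_K)$ then lies in $\Lambda_4$ if $|K\cap K'|=4$ and in $\Lambda_6$ if $|K\cap K'|=2$. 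By the Remark identifying Leech roots with $\Lambda$, this means $\langle r,r'\rangle=0$ exactly when $|K\cap K'|=4$ and $\langle r,r'\rangle=1$ exactly when $|K\cap K'|=2$. Thus ``orthogonal''/``meeting'' translates into ``$|K\cap K'|=4$''/``$|K\cap K'|=2$'', and everything becomes combinatorics of the Golay code.

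I would first prove the mutual orthogonality. Any two octads in $\TTT_{ij}$ contain the triple $\{\infty,i,j\}$, hence meet in at least $3$, so in exactly $4$, points. For $\TTT_i$ every octad satisfies $K\cap K_0=\{\infty,i\}$; if two of them met in only $2$ points, then $K\triangle K'$ would be a dodecad with $(K\triangle K')\cap K_0=\emptyset$, so $(K\triangle K')\cup K_0$ would be a $\CCC$-set of size $20$ — impossible. Hence any two octads in $\TTT_i$ also meet in $4$ points. (That each of the six families has $16$ elements is already established in the proof of Lemma~\ref{48}.)

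For the incidence numbers I would use the elementary abelian subgroup of order $16$ in the stabilizer of the octad $K_0$ in $M_{24}$ — it occurs as the normal subgroup $(\Z/2\Z)^4$ of $\Aut(X,h_2)$ — which fixes $K_0$ pointwise and acts regularly on $\Omega\setminus K_0$ (Conway~\cite{MR0338152}). Since it stabilizes each family $\TTT_i$, $\TTT_{ij}$, the number of members of a given family passing through a point of $\Omega\setminus K_0$ is independent of the point; counting $\sum_K|K\setminus K_0|$ over the family shows this number equals $6$ for the families $\TTT_i$ (octads with $6$ points outside $K_0$) and $4$ for the families $\TTT_{ij}$ (octads with $4$ points outside $K_0$). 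Now fix a root $r\leftrightarrow K$ of one family and evaluate $\sum_{K'}|K\cap K'\setminus K_0|=\sum_{K'}(|K\cap K'|-|K\cap K'\cap K_0|)$ over the octads $K'$ of the other family in two ways: summing over $K'$, using $|K\cap K'|\in\{2,4\}$ and the value of $|K\cap K'\cap K_0|$ read off from the definitions, gives an affine function of the number $x$ of $K'$ met by $K$ (in the case $\{\TTT_{ij},\TTT_{kl}\}$ one first splits $\TTT_{kl}$ as explained below); summing over the points of $K\setminus K_0$ gives $6\cdot6$, $6\cdot4$, $6\cdot4$, $4\cdot4$ in the four cases $\{\TTT_i,\TTT_j\}$, $\{\TTT_i,\TTT_{jk}\}$ with $i\notin\{j,k\}$, $\{\TTT_i,\TTT_{ij}\}$, $\{\TTT_{ij},\TTT_{kl}\}$. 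Solving yields $x=6,12,4,6$, which is precisely the assertion (by double counting it then holds symmetrically, so one side of each configuration suffices).

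The delicate case is $\{\TTT_{ij},\TTT_{kl}\}$ with $(i,j)\ne(k,l)$, where $|K\cap K'\cap K_0|$ is \emph{not} constant over $\TTT_{kl}$: it equals $3$ for the four octads of $\TTT_{kl}$ whose extra point of $K_0$ coincides with that of $K$ — and those four turn out to be disjoint from $K$ — and equals $2$ for the remaining twelve. So one must first establish this $4+12$ partition, which follows from the elementary fact that a fixed tetrad of $K_0$ lies in exactly five octads, one being $K_0$. I expect this bookkeeping of sub-cases, rather than any conceptual difficulty, to be the main obstacle; the other three cases require only the regular action of the group of order $16$ on $\Omega\setminus K_0$ together with the list of $\CCC$-set cardinalities.
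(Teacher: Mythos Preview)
Your argument is correct.  The reduction to octad intersections and the orthogonality proofs for the $\TTT_i$ and $\TTT_{ij}$ are essentially identical to the paper's (your ``$\CCC$-set of size $20$'' is the complement of the paper's ``dodecad containing the octad $K_0$'').  The difference lies in the incidence counts: the paper simply asserts that ``an element from $\TTT_i$ or $\TTT_{ij}$ has the incidence relation with $\TTT_j$ and $\TTT_{kl}$ as desired'' and then invokes transitivity of the full group $(\Z/2\Z)^4\rtimes(\Z/3\Z\times\mathfrak{S}_4)$ on each family, so it is implicitly checking one representative in each case.  You instead carry out an explicit double count using only the normal $(\Z/2\Z)^4$ and its regular action on $\Omega\setminus K_0$, which gives the numbers $6,12,4,6$ uniformly without singling out a representative.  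Your route is slightly longer but more self-contained; the paper's is shorter but leaves the actual verification to the reader.

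One small wording issue: in the case $\{\TTT_{ij},\TTT_{kl}\}$ you say the four octads $K'$ with $m'=m$ ``turn out to be disjoint from $K$''.  They are certainly not disjoint as subsets of $\Omega$ (they share $\{\infty,i,m\}$); what you mean --- and what you use --- is that $|K\cap K'|\ge 3$ forces $|K\cap K'|=4$, so the corresponding Leech roots are \emph{orthogonal} (the curves on $X$ are disjoint).  With that clarified, the bookkeeping $28-2y=4\cdot 4$ giving $y=6$ is fine.
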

\begin{proof}
We put  $r =(1,1, A-2\nu_K)$ and $ r' =(1,1,A- 2\nu_{K'}) \in {\TTT}$.  Then 
$\langle r, r'\rangle = 0$ or $1$ if and only if 
$|K\cap  K'| = 4$ or $2$, respectively.  Since any two octads meet at $0, 2$ or $4$ points, 
${\TTT}_{ij}$ consists of $16$ mutually orthogonal Leech roots.   

On the other hand, if $r, r' \in {\TTT}_i$ and $K\cap K' = \{ \infty, i\}$, then
the symmetric difference $K+K'$ and $\Omega + K + K'$ are dodecads.  Note that
$\Omega + K + K'$ contains the octad $K_0$.  This contradicts the fact that no dodecads
contain an octad.  Thus we have $|K\cap K'| = 4$, and hence ${\TTT}_i$ consists of
$16$ mutually disjoint Leech roots.

Finally we  see that an element from ${\TTT}_i$ or ${\TTT}_{ij}$ has the
incidence relation with ${\TTT}_j$ and ${\TTT}_{kl}$ as desired.  Since
the group $(\Z/2\Z)^4 \rtimes (\Z/3\Z \times \mathfrak{S}_4)$ acts transitively on each set ${\TTT}_i, {\TTT}_{ij}$, the assertion follows.
\end{proof}
By defining $\{\SSS_{ij}\}$ by 
$$\SSS_{01}={\TTT}_0,\;\; \SSS_{02}={\TTT}_1,\;\; \SSS_{03}={\TTT}_2,\;\;
\SSS_{11}={\TTT}_{12},\;\; \SSS_{12}={\TTT}_{02},\;\; \SSS_{13}={\TTT}_{01},$$ 
we have finished the proof of Theorem~\ref{thm:six}.


%
\section{Supersingular elliptic curve in characteristic $5$}\label{sec:E}
We summarize some facts
on the supersingular elliptic curve in characteristic $5$
which we will use later.  
We have, up to isomorphisms, only one supersingular elliptic curve 
defined over an algebraically closed field $k$ of characteristic $5$, which is given by the equation
$$
               y^{2}  = x^{3} -1.
$$
We denote by $E$ a nonsingular complete model of the supersingular 
elliptic curve. 
In the affine model, let $(x_{1}, y_{1})$ and $(x_{2}, y_{2})$ be
two points on $E$. Then, the addition 
$$
m : E\times E\to E
$$
of $E$ is given by
\begin{equation}\label{eq:add}
\renewcommand{\arraystretch}{2}
\begin{array}{l}
m^* x = -x_{1} - x_{2} + \dfrac{(y_{2} - y_{1})^2}{(x_{2} - x_{1})^{2}},\\
m^* y = y_{1} + y_{2} - \dfrac{(y_{2} - y_{1})^3}{(x_{2} - x_{1})^3} + 
\dfrac{3(x_{2}y_{1} - x_{1}y_{2})}{(x_{1} - x_{2})}.
\end{array}
\end{equation}
We denote by $[n]_{E}$ the multiplication by an integer $n$, and by $E_{n}$
the group of $n$-torsion points of $E$. 
The multiplication $[2]_{E}$ is concretely
given by
$$
[2]_{E}^*\, x = x_{1} + 1/y_{1}^{2},\;\;\;\; [2]_{E}^*\, y = 2y_{1} - 1/y_{1} + 1/y_{1}^{3}.
$$
We denote by ${\Fr}$ the relative Frobenius morphism. Then, it satisfies
$$
{\Fr}^{2} = [-5]_E.
$$
We set $\omega = 2 + 3\sqrt{2}$. Then, $\omega$ is a primitive cube root of unity.
We set 
$$
P_{\infty} = (0, \infty),\;\;  P_{0} = (1,0), \;\;  P_{1} = (\omega,0), \;\;  P_{2} = (\omega^2, 0).
$$
The point $P_{\infty}$ is the zero point of $E$, and the group $\KerE{2}$ of 
$2$-torsion points of $E$ is  
$$
\KerE{2} = \{P_{\infty}, P_{0}, P_{1}, P_{2}\}.
$$
The translation $T_{P_{0}}$ by the point $P_{0}$ is given by
$$
 T_{P_{0}}^{*}(x) = \dfrac{x + 2}{x - 1},\quad
 T_{P_{0}}^{*}(y) = \dfrac{2y}{(x - 1)^2}.
 $$
We set
$$
u = 2(x + T_{P_{0}}^{*}(x) -1),\;\; v =   2\sqrt{2}(y + T_{P_{0}}^{*}(y)).
$$
Then, $u$ and $v$ are invariant under the action of $T_{P_{0}}^{*}$,
and we have
$$
u = \dfrac{2x^2  +3 x + 1}{(x -1)},\;\;
v =   \dfrac{2\sqrt{2}y(x^2 +3x + 3)}{(x - 1)^2}.
$$
We know that the degree of the field extension  $k(x, y)/k(u, v)$ is equal to 2
and that $u$ and $v$ satisfy the equation $v^2 = u^3 - 1$.
Therefore, we have the quotient morphism by the action of $T_{P_{0}}$:
$$
\begin{array}{rccc}
  \phi_{E, 2} :& E & \to & E \\
      & (x, y) & \mapsto  & (u, v).
\end{array}
$$
By a direct calculation, we see that
$$
    \phi_{E, 2}^{2} =  [-2]_{E}.
$$
The elliptic curve $E$ has the following automorphism $\gamma$ of order $6$ defined by 
$$
\gamma^*x=\omega x, \quad \gamma^* y=-y.
$$
We consider the endomorphism ring $\OOO={\End}(E)$. We set
$B  = {\End}(E)\otimes_{\Z} {\Q}$. 
Then, as is well-known,
$B$ is the quaternion division algebra with discriminant $5$ and  $\OOO$
is a maximal order of $B$. We consider the following elements of $\OOO$:
$$
\omega_{1} = 1, \;\; ~\omega_{2} = \gamma, \;\; ~\omega_{3} =  \phi_{E, 2}, \;\;  ~\omega_{4} =\gamma  \phi_{E, 2}.
$$
The multiplication is given as follows:
\par
\medskip
\begin{center}
\begin{tabular}{| l |c|c|c|c|}
\hline
     & $\gamma $&$ \phi_{E, 2}$& $\gamma  \phi_{E, 2}$ \\
\hline
$\gamma $ & $\gamma -1$ & $\gamma  \phi_{E, 2}$ & $- \phi_{E, 2} +\gamma  \phi_{E, 2}$\\
\hline
$ \phi_{E, 2}$ & $-1 + \phi_{E, 2} - \gamma  \phi_{E, 2}$ & $-2$ & $-2 + 2\gamma- \phi_{E, 2}$  \\
\hline
$\gamma  \phi_{E, 2}$ & $-\gamma  + \phi_{E, 2}$ & $-2\gamma$ & $-2 - \gamma  \phi_{E, 2}$ \\
\hline
\end{tabular}
\end{center}
\par
\medskip\noindent
For example, we have $\phi_{E, 2}\gamma= -1 + \phi_{E, 2} - \gamma  \phi_{E, 2}$.

The canonical involution $a\mapsto \bar{a}$ of the quaternion algebra $B$ is given as follows:
$$
\bar{\gamma} = - \gamma^{2},
\;\;
\overline{\phi_{E, 2}} = - \phi_{E, 2},
\;\;
\overline{\gamma \phi_{E, 2}} = -1 - \gamma  \phi_{E, 2} .
$$
Denoting by ${\Tr}$ the trace map in $B$, we have a $4\times 4$  matrix 
$({\Tr}~ \omega_{i}\omega_{j})$:
$$
\left[
\begin{array}{cccc}
  2 & 1 & 0 & 1 \\
1 & -1 & -1 & -1 \\
0 & -1 & -4 & -2 \\
-1 & -1 & -2 & -3 
\end{array}
\right].
$$
Since the determinant of this matrix is equal to $-25$, we know that
$\omega_{i}~(i = 1, 2, 3, 4)$ is a basis of the maximal order $\OOO$:
$$
      \OOO = {\Z} + {\Z}\gamma + {\Z}\phi_{E, 2}+ 
{\Z}\gamma \phi_{E, 2}.
$$
 \begin{remark}\label{Frobenius}
Considering $\Ker ({\Fr } - 1) = E({\F}_{5}) \cong {\Z}/6{\Z}$ , we have
$$
{\Fr }  =   1+\phi_{E, 2}\gamma (1 + \gamma)=-1 + \phi_{E, 2} - 2\gamma \phi_{E, 2}.
$$
\end{remark}

%
\section{Number of ${\F}_{p^2}$-rational points on ${\Km}(A)$}\label{sec:Numb}
%
Let $E$ be a supersingular
elliptic curve defined over ${\F}_{p}$. 
We set $A = E \times E$ and
denote by $\iota_A$ the inversion of $A$. 
We denote by
${\Km}(A)$ the Kummer surface associated with $A$.
In this section,
we compute the number $N$ of ${\F}_{p^2}$-rational points on ${\Km}(A)$.

In Katsura~and~Kondo~\cite{MR2862188}, we proved the following lemma.
For the readers' convenience, we give here the proof again.
\begin{lemma}
$E({\F}_{p^2}) = {\Ker}[p + 1]_{E}$. In particular,  we have
$| E({\F}_{p^2})| = (p + 1)^{2}$ and 
$| A({\F}_{p^2})| = (p + 1)^{4}$.
\end{lemma}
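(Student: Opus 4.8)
The plan is to prove the claim $E(\F_{p^2}) = \Ker[p+1]_E$ via the relative Frobenius morphism $\Fr$ and its basic arithmetic properties, which were recorded in Section~\ref{sec:E}. Recall that for a supersingular elliptic curve $E$ defined over $\F_p$, the relative Frobenius $\Fr : E \to E^{(p)}$ composed with itself gives $\Fr^2 = [-p]_E$ (stated for $p=5$ in the excerpt, and true in general; here I would state the general version $\Fr^2 = [\pm p]_E$, with the sign being $-1$ since $E$ is supersingular). The starting observation is the standard one: a point $P \in E(\bar\F_p)$ is $\F_{p^2}$-rational if and only if it is fixed by the $q$-power absolute Frobenius with $q = p^2$, i.e. if and only if $\Fr^2$ (the $\F_p$-Frobenius iterated, which over $\F_p$ is the $p^2$-power Frobenius endomorphism up to the identification $E^{(p^2)} = E$) fixes $P$; equivalently $(\Fr^2 - 1)(P) = 0$.

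First I would make precise the identification that lets us regard $\Fr^2$ as an endomorphism of $E$: since $E$ is defined over $\F_p$, we have $E^{(p^2)} = E$ canonically, and under this identification $\Fr^2$ is the Frobenius endomorphism attached to $\F_{p^2}$, so $E(\F_{p^2}) = \Ker(\Fr^2 - 1)$. Then I substitute $\Fr^2 = [-p]_E$ to get $E(\F_{p^2}) = \Ker([-p]_E - [1]_E) = \Ker[-(p+1)]_E = \Ker[p+1]_E$, using that $[-1]_E$ is an automorphism. This gives the set-theoretic equality. For the cardinality count: since $E$ is supersingular in characteristic $p$, the multiplication-by-$m$ map $[m]_E$ is separable exactly when $p \nmid m$, in which case $|\Ker[m]_E| = m^2$. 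Here $m = p+1$ is prime to $p$, so $|E(\F_{p^2})| = (p+1)^2$. Finally, $A = E \times E$ gives $A(\F_{p^2}) = E(\F_{p^2}) \times E(\F_{p^2})$, hence $|A(\F_{p^2})| = (p+1)^4$.

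The main obstacle, such as it is, is bookkeeping about which Frobenius is which: carefully distinguishing the relative (geometric) Frobenius morphism $\Fr : E \to E^{(p)}$ from the absolute Frobenius on points, and justifying the identification $E^{(p^2)} = E$ together with the claim that under it $\Fr \circ \Fr$ becomes precisely the $\F_{p^2}$-Frobenius endomorphism whose fixed points are $E(\F_{p^2})$. I would handle this by invoking the standard fact (e.g.\ as in Silverman or as used in \cite{MR2862188}) that for a curve defined over $\F_q$, the $q$-power Frobenius endomorphism $\phi_q$ satisfies $E(\F_q) = \Ker(\phi_q - 1)$, and that $\phi_{p^2} = \phi_p^2$. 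No genuinely hard step is involved; the content is entirely the supersingularity input $\Fr^2 = [-p]_E$ and the inseparability/separability dichotomy for $[m]_E$, both of which are available from Section~\ref{sec:E} and standard theory.
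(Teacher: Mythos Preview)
Your proposal is correct and follows essentially the same argument as the paper: both use that $E(\F_{p^2})$ is the fixed locus of $\Fr^2$ and substitute $\Fr^2 = [-p]_E$ to obtain $\Ker[p+1]_E$. Your version is more detailed in justifying the Frobenius bookkeeping and the separability argument for the cardinality, but the paper's proof is simply a terse two-line version of exactly what you wrote.
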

\begin{proof}
A point $P \in E$ is contained in $E({\F}_{p^{2}})$ if and only if 
${\Fr}^{2}(P) = P$. Since ${\Fr}^{2} = [-p]_E$, we have 
${\Fr}^{2}(P) = P$ if and only if $[p + 1]_{E}(P) = 0$.
\end{proof}
\begin{theorem}
The number $N$ of ${\F}_{p^2}$-rational points on ${\Km}(A)$
is equal to $1 + 22p^2 + p^4$.
\end{theorem}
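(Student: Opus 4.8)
The plan is to count $\F_{p^2}$-rational points directly, using that $\Km(A)$ is the minimal resolution of the quotient surface $A/\langle\iota_A\rangle$. The inversion $\iota_A$ has as its fixed locus exactly the group $A_2$ of the $16$ two-torsion points of $A$, so $A/\langle\iota_A\rangle$ has $16$ ordinary double points (the images of $A_2$), and $\Km(A)$ is obtained by blowing these up; the exceptional locus consists of $16$ disjoint smooth rational curves $E_1,\dots,E_{16}$. I would split $\Km(A)(\F_{p^2})$ according to whether a point lies on $\bigsqcup_i E_i$ or on the complement, the latter being isomorphic, as an $\F_{p^2}$-scheme, to $(A\setminus A_2)/\langle\iota_A\rangle$.

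First I would dispose of the exceptional curves. Since $p$ is odd we have $2\mid p+1$, so the preceding lemma gives $A_2\subset\Ker[p+1]_A=A(\F_{p^2})$; hence every node of $A/\langle\iota_A\rangle$ is $\F_{p^2}$-rational, and each $E_i$ is a projective line defined over $\F_{p^2}$, contributing $p^2+1$ points. Equivalently, passing from $A/\langle\iota_A\rangle$ to its resolution $\Km(A)$ removes $16$ rational points and adds $16(p^2+1)$, i.e. increases the point count by $16p^2$.

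It remains to count $\bigl(A/\langle\iota_A\rangle\bigr)(\F_{p^2})$. Such a point is the image of an $\iota_A$-orbit $\{P,-P\}\subset A(\overline{\F}_p)$ that is stable under $\sigma:=\Fr^2$. Since $\Fr^2=[-p]_A$ componentwise, the Frobenius $\sigma$ either fixes $P$ — which by the lemma means $P\in\Ker[p+1]_A$ — or interchanges $P$ and $-P$ — which means $[-p]P=-P$, i.e. $P\in\Ker[p-1]_A$. As $\gcd(p-1,p+1)=2$, these two subgroups intersect precisely in $A_2$, and the same reasoning as in the lemma (the multiplications by $p\pm 1$ being separable, each having degree $(p\pm1)^2$ on $E$) gives $|\Ker[p\pm1]_A|=(p\pm1)^4$. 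The $16$ points of $A_2$ account for the $16$ nodes; the remaining $(p+1)^4-16$ points fixed by $\sigma$ and the $(p-1)^4-16$ points swapped by $\sigma$ form disjoint sets, each point lying in an $\iota_A$-orbit of size $2$, and distinct such orbits map to distinct $\F_{p^2}$-points of the quotient (and to points different from the node images). Hence
$$
\#\bigl(A/\langle\iota_A\rangle\bigr)(\F_{p^2}) = 16 + \frac{(p+1)^4-16}{2} + \frac{(p-1)^4-16}{2} = p^4 + 6p^2 + 1,
$$
and adding the contribution $16p^2$ of the resolution yields $N = 1 + 22p^2 + p^4$.

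The argument is computational once the set-up is in place; the one point demanding care is the Galois-descent step for the open part, namely noticing that an $\F_{p^2}$-point of $A/\langle\iota_A\rangle$ may arise from a pair $\{P,-P\}$ that Frobenius \emph{interchanges} rather than fixes, and identifying this ``twisted'' contribution with $\Ker[p-1]_A$. (Alternatively, since $X$ is supersingular with Artin invariant $1$ and, in the situation at hand, its N\'eron--Severi lattice is generated by classes defined over $\F_{p^2}=\F_{25}$, all $22$ eigenvalues of $\sigma$ on the second $\ell$-adic cohomology of $\Km(A)$ equal $p^2$, whence $N = 1 + p^4 + 22p^2$ by the Lefschetz trace formula; but the elementary count above is self-contained.)
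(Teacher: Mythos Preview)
Your argument is correct, but it follows a different route from the paper's. The paper establishes the same decomposition of points coming from $\Ker[p+1]_A$, $\Ker[p-1]_A$, and the exceptional curves, but records this only as a \emph{lower bound} $N\ge 1+22p^2+p^4$; it then obtains the matching upper bound from the zeta function of $\Km(A)$ over $\F_{p^2}$, using that the Frobenius eigenvalues $\alpha_i$ on $H^2$ satisfy $|\alpha_i|=p^2$, so $N=1+\sum_i\alpha_i+p^4\le 1+22p^2+p^4$. Your Galois-descent step---that every $\F_{p^2}$-point of $A/\langle\iota_A\rangle$ arises from a $\sigma$-stable $\iota_A$-orbit, hence from $\Ker[p+1]_A\cup\Ker[p-1]_A$---makes the direct count exact and bypasses the Weil bounds entirely, which is more elementary and self-contained. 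The paper's detour, on the other hand, yields the extra consequence that every $\alpha_i$ equals $p^2$, information your parenthetical alternative assumes rather than derives.
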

\begin{proof}
We consider the quotient morphism
$$
 \varpi : A \to A/\langle \iota_A \rangle.
$$
By ${\Ker}[2]_{A} \subset  {\Ker}[p + 1]_{A}$,
all $2$-torsion points are defined over $\F_{p^2}$.
Excluding the 2-torsion points, we get 
$\{(p + 1)^4 - 16\}/2$  points 
of ${\Km}(A)(\F_{p^2})$ derived from $(p + 1)$-torsion points 
on $A$. If a point $P$ on $A$ satisfies $\Fr^2(P) = \iota_A(P)$, then
we have $\Fr^{2}( \varpi (P)) = \varpi (P)$ on $A/\langle \iota_A \rangle$.
Therefore, $ \varpi (P)$ is an ${\F}_{p^2}$-rational point 
on $A/\langle \iota_A \rangle$. Hence, it gives
an ${\F}_{p^2}$-rational point on ${\Km}(A)$.
Since $\Fr^2(P) = \iota_A(P)$ holds if and only if $P$ is contained in $  {\Ker}[p - 1]_{A}$,
the number of such points on $A$ is equal to $(p -1)^4$.
Excluding the 2-torsion points, we get
$\{(p - 1)^4 - 16\}/2$  points 
of ${\Km}(A)(\F_{p^2})$ derived from $(p - 1)$-torsion points on $A$.
Since 
$|\P^1(\F_{p^2})|=p^2+1$,
we have $16 (p^{2} + 1)$ points of ${\Km}(A)(\F_{p^2})$
that come from the 16 exceptional curves.
Therefore, in total we have an inequality
$$
N\ge \{(p + 1)^4 - 16\}/2 + \{(p - 1)^4 - 16\}/2 + 16 (p^{2} + 1)=1 + 22p^2 + p^4.
$$
%
On the other hand, we consider the congruent zeta function 
$Z({\Km}(A)/{\F}_{p^2}, t)$
of ${\Km}(A)$. Since ${\Km}(A)$ is a $K3$ surface, we have
$$
Z({\Km}(A)/{\F}_{p^2}, t) = \left((1 - t) (1 - p^4t)\prod_{i = 1}^{22}(1 -\alpha_{i}t)\right)\inv
$$
with algebraic integers $\alpha_{i}$ 
satisfying $| \alpha_{i} | = p^2$.
Since  $\log Z({\Km}(A)/{\F}_{p^2}, t) = Nt + \cdots $, we have
$$
  N = 1 + \sum_{i = 1}^{22}\alpha_{i} + p^4 \leq 1 + \sum_{i = 1}^{22}| \alpha_{i}|  + p^4 = 1 + 22p^2 + p^4.
$$
Hence, we have $N = 1 + 22p^2 + p^4$.
\end{proof}
\begin{corollary}
If $p = 5$, we have $| {\Km}(A)({\F}_{25})| = 1176$.
\end{corollary}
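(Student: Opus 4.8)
The plan is simply to substitute $p=5$ into the formula $N = 1 + 22p^2 + p^4$ established in the preceding theorem. First I would recall that the theorem applies to $\Km(A)$ for $A = E\times E$ with $E$ the supersingular elliptic curve over $\F_p$, and that in characteristic $5$ the curve $y^2 = x^3 - 1$ of Section~\ref{sec:E} provides such an $E$ defined over $\F_5$. Then $|\Km(A)(\F_{25})| = 1 + 22\cdot 5^2 + 5^4 = 1 + 550 + 625 = 1176$.

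There is essentially no obstacle here; the only thing worth a sentence is to note that the hypotheses of the theorem are met at $p=5$ (a supersingular elliptic curve over $\F_5$ exists, as exhibited explicitly in Section~\ref{sec:E}), so the formula is applicable, and the arithmetic is routine.
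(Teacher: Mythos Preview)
Your proposal is correct and is exactly the approach the paper takes: the corollary is stated without proof immediately after the theorem $N = 1 + 22p^2 + p^4$, so the intended argument is precisely the substitution $p=5$ you carry out.
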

\begin{remark}\label{rational points}
Let $E$ be the nonsingular complete model  of the supersingular elliptic curve 
defined by $y^{2} = x^3 -1$ in characteristic 5. Then,  by the consideration above, 
a point $P =(a, b) \in E$ is contained in $\KerE{4}\setminus \KerE{2}$
if and only if ${\Fr}^2(P) = -P$ and $b \neq 0$. Therefore, we have the following.
\begin{itemize}
\item[\rm (i)] $P \in \KerE{2}$ if and only if $b = 0$. (Hence, $a \in {\F}_{25}$);
\item[\rm (ii)] $P \in \KerE{4}\setminus \KerE{2}$ if and only if $a\in {\F}_{25}$ and $b \not\in {\F}_{25}$;
\item[\rm (ii)] $P \in \KerE{6}\setminus \KerE{2}$ if and only if $a\in {\F}_{25}$ and $b \in {\F}_{25}\setminus \{0\}$.
\end{itemize}
\end{remark}

%
\section{Six sets of disjoint $16$ smooth rational curves on $\Km(A)$}\label{sec:Kum}
In this section,
we resume working in characteristic $5$.
Let $E$ be the elliptic curve defined by $y^2=x^3-1$,
and let $A$ be the abelian surface $E\times E$.
For brevity, we denote by $Y$ the Kummer surface $\Km(A)$.
As is well-known~(see Ogus~\cite{MR563467}), $Y$ is isomorphic to our supersingular $K3$ surface $X$
with Artin invariant 1.
In this section, we explicitly construct six sets
$$
\SSS_{00}, \SSS_{01}, \SSS_{02}, \SSS_{10}, \SSS_{11}, \SSS_{12}
$$
of disjoint $16$ smooth rational curves on $Y$
with the properties (a), (b), (c) in Theorem~\ref{thm:six},
and prove Theorem~\ref{cor:six}.
We denote by $\SA$ and $\SY$ the N\'eron-Severi lattices of $A$ and $Y$,
respectively.
It is well-known that $\SA$ is of discriminant $-25$.
\par
\medskip
We denote by $A_2$ the group of $2$-torsion points of $A$:
$$
A_2=E_2\times E_2.
$$
%
We consider the following commutative diagram:
$$
\renewcommand{\arraystretch}{1.2}
\begin{array}{ccc} 
\tilA & \maprightsp{\pi} & Y \\
\mapdownleft{b} & & \mapdownright{\rho}\\
A &\maprightsb{\varpi} &A/\gen{\iota_A},
\end{array}
$$
where  $b$ is the blow-up at the points of $A_2$,
$\varpi$ is the quotient morphism by $\gen{\iota_A}$,
$\rho$ is the minimal resolution, and $\pi$ is the  double covering induced by $\varpi$.
For $P\in A_2$, we denote by $E_P$ the exceptional curve of $b$ over $P$.
The homomorphism $b^* :\SA\to \StilA$  identifies $\SA$ with a sublattice of 
 the N\'eron-Severi lattice $\StilA$ of $\tilA$,
and we obtain an orthogonal decomposition 
\begin{equation}\label{eq:orthoStilA}
\StilA=\SA\oplus \bigoplus_{P\in A_2} \Z [E_P].
\end{equation}
Let $\TTT$ denote the group of translations of $A$ by the points in $A_2$.
Then $\TTT$ acts on $\tilA$,
and hence on $\StilA$.
The action  preserves the orthogonal decomposition~\eqref{eq:orthoStilA},
and its restriction to the factor $\SA$ is trivial,
while its restriction to the factor $\bigoplus \Z [E_P]$ is induced by 
the permutation representation of $\TTT$ on $A_2$.
The inversion $\iota_A$ of $A$ lifts to an involution $\tilde\iota_A$ of $\tilA$,
and $\pi$ is the quotient map by $\gen{\tilde\iota_A}$.
The homomorphism $\pi^*$  induces an embedding of  the lattice $\SY(2)$ into $\StilA$,
where $\SY(2)$ is the $\Z$-module $\SY$ with the symmetric bilinear form defined by 
$\intM{x, y}{\SY(2)} =2\intM{x, y}{\SY}$.
\par
For an irreducible curve $\varGamma$ on $A$ that is invariant under $\iota_A$,
we denote by $\varGamma_{\tilA}$ the \emph{strict} transform of $\varGamma$ by  $b:\tilA\to A$,
and by $\varGamma_Y$ the image of $\varGamma_{\tilA}$ by $\pi:\tilA\to Y$ 
with the reduced structure.
Since $\varGamma$  is invariant under $\iota_A$,
the map $\pi$ induces a double covering $\varGamma_{\tilA} \to \varGamma_Y$.
Suppose that $\varGamma$ is smooth.
Then we have
$$
[\varGamma_{\tilA}]=[b^{*}\varGamma]-\sum_{P\in \varGamma\cap A_2} [E_P].
$$
\par
For an endomorphism $g: E\to E$ of $E$,
we denote by
$\Phi_g$ the  graph  of $g$; that is,
$$
\Phi_g =\set{(P, g(P))}{P\in E}.
$$
We can calculate the intersection number of a  curve of certain type on $A$ with $\Phi_g$
by the following method.
Suppose that $H$ is a (hyper-)elliptic curve  defined by
$$
v^2=f_H(u),
$$
with the  involution $\iota_H: (u, v)\mapsto (u, -v)$.
We consider two finite morphisms 
$$
\eta_{i} : H \to E \;\;(i = 1, 2)
$$
 satisfying  $\eta_i\circ \iota_H=\iota_E\circ \eta_i$,
and  we set 
$$
\eta = (\eta_{1}, \eta_{2}) : H \to E \times E = A.
$$
We denote by $\sGamma [\eta]$  the image of $\eta$ on $A$ 
with the reduced structure.
Suppose that $\eta$ induces a birational map from 
$H$ to $\sGamma[\eta]$.
Using the addition $m : E \times E \to E$, we have a divisor
$$
\Delta = \Ker m = \{(P, -P) \mid P \in E\}
$$
on $A=E\times E$.
From the given endomorphism $g \in {\rm End}(E)$, we obtain a morphism
$$
      (-g)\times \id :  E \times E \to  E \times E.
$$
Then we have 
$\Phi_{g} = ((-g)\times \id)^{*}\Delta$.
We consider the morphism
$$
\begin{array}{lccccccc}
\theta : &   H & \maprightsp{\eta} &E \times E & 
\maprightsp{(-g)\times \id} &  E \times E & \maprightsp{m} & E.
\end{array}
$$
Then we have 
\begin{equation}\label{eq:degtheta}
\begin{array}{ll}
\intM{\sGamma [\eta], \Phi_g}{\SA} & =\deg \eta^{*}\Phi_g = \deg (\eta^{*}\circ ((-g)\times{\rm \id})^{*}\Delta)\\
  &= \deg (\eta^{*}\circ ((-g)\times{\rm  id})^{*}\circ m^{-1}(P_{\infty})) \\
  &=\deg ((m\circ ((-g)\times \id) \circ\eta)^{*}(P_{\infty})),\\
& = \deg \theta. 
\end{array}
\end{equation}
By the assumption $\eta_i\circ \iota_H=\iota_E\circ \eta_i$, 
the map $\eta_i$ is written as
$$
\eta_i^* x= M_i(u),
\quad
\eta_i^* y= v\cdot N_i(u),
$$
by some rational functions $M_i$ and $N_i$ of one variable $u$.
Since $g: E\to E$ satisfies $g\circ \iota_E=\iota_E\circ g$,
there exist rational functions $\Psi$ and $\Xi$ of one variable $x$
such that 
$$
g^* x= \Psi(x),
\quad
g^* y= y\cdot\Xi(x).
$$
The morphism $\theta$ induces a finite morphism
$$
\tilde{\theta}: H/\gen{\iota_H}=\P^1\to E/\gen{\iota_E}=\P^1
$$
from the $u$-line to the $x$-line.
Using~\eqref{eq:add}, 
we see that  $\tilde{\theta}$ is given by the rational function
$$
\tilde{\theta}^*x=-\Psi(M_1(u))-M_2(u)+\frac{f_H(u) \cdot (N_2(u) + N_1(u) \cdot \Xi(M_1(u)))^2}{(M_2(u)-\Psi(M_1(u)))^2}.
$$
Since $\deg \tilde{\theta}=\deg \theta$,
we can calculate $\intM{\sGamma [\eta], \Phi_g}{\SA}=\deg \theta$ 
simply by calculating the degree of the rational function $\tilde{\theta}^*x$
of one variable.
%
%
%
\begin{proposition}
Let 
$\gamma: E\to E$ and $\phi_{E, 2}:E\to E$ be the  endomorphisms
defined in Section~\ref{sec:E}.
Then classes of the curves 
\begin{equation*}
\begin{array}{lll}
B_1 =E\times\{P_{\infty}\},
\; &
B_2 =\{P_{\infty}\}\times E,
\; &
B_3 =\Phi_{\id}, 
\; \\
B_4 =\Phi_{\gamma},
\; &
B_5 =\Phi_{\phi_{E, 2}},
\; &
B_6 =\Phi_{ \gamma  \phi_{E, 2}}
\end{array}
\label{eq:sixbasis}
\end{equation*}
 on $A$  form a basis of $S_A$,
 where $P_{\infty}$ is the zero point  of $E$.
\end{proposition}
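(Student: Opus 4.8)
The plan is to compute the $6\times 6$ Gram matrix $G=\bigl(B_i\cdot B_j\bigr)_{1\le i,j\le 6}$ of the six classes with respect to the intersection form on $A$, to check that $\det G=-25$, and then to conclude by an index computation. Recall that $\SA\cong\Z^{2}\oplus\Hom(E,E)$ as abelian groups, the two copies of $\Z$ being spanned by $B_1$ and $B_2$, so that $\SA$ has rank $6$ (since $\Hom(E,E)=\OOO$ has rank $4$), and that $\SA$ has discriminant $-25$, as already recalled. Granting $\det G=-25$: the $B_i$ are then $\Z$-linearly independent, so they span a sublattice $M\subseteq\SA$ of finite index $n=[\SA:M]$, and $\det G=\operatorname{disc}M=n^2\operatorname{disc}\SA=-25\,n^2$ forces $n=1$; hence $B_1,\dots,B_6$ is a basis of $\SA$.

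So everything reduces to evaluating the twenty-one entries of $G$. Each $B_i$ is a smooth curve isomorphic to $E$, hence of genus $1$; since $K_A=0$, adjunction gives $B_i^2=0$, and the diagonal of $G$ vanishes. The curves $B_1=E\times\{P_\infty\}$ and $B_2=\{P_\infty\}\times E$ are fibres of the two projections $A\to E$, so $B_1\cdot B_2=1$. For the graph $\Phi_g$ of an endomorphism $g\colon E\to E$, the first projection restricts to an isomorphism $\Phi_g\to E$ and the second restricts to $g$, whence $B_2\cdot\Phi_g=1$ and $B_1\cdot\Phi_g=\deg g$; applied to $g=\operatorname{id},\gamma,\phi_{E,2},\gamma\phi_{E,2}$ and combined with $\deg\operatorname{id}=\deg\gamma=1$ and $\deg\phi_{E,2}=\deg(\gamma\phi_{E,2})=2$ (the latter from $\phi_{E,2}^2=[-2]_E$), this fills in the first two rows and columns of $G$. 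Finally, for $f\ne g$ the automorphism $(P,Q)\mapsto(P,Q-g(P))$ of $A$ carries $\Phi_g$ onto $B_1$ and $\Phi_f$ onto $\Phi_{f-g}$, so
$$
\Phi_f\cdot\Phi_g=\Phi_{f-g}\cdot B_1=\deg(f-g)=\deg f+\deg g-\Tr(f\bar{g}),
$$
where $\Tr$ denotes the reduced trace of $B=\OOO\otimes\Q$ and $f\mapsto\bar{f}$ the canonical involution; the remaining entries of $G$ are then read off from the multiplication table, the canonical involution, and the trace matrix $(\Tr\,\omega_i\omega_j)$ recorded in Section~\ref{sec:E}.

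With $G$ made explicit in this way, the identity $\det G=-25$ is a routine numerical verification, which completes the proof. There is no conceptual obstacle; the only point requiring care is the bookkeeping of the intersection numbers — in particular the bidegree identities $B_1\cdot\Phi_g=\deg g$, $B_2\cdot\Phi_g=1$ and the formula $\Phi_f\cdot\Phi_g=\deg(f-g)$ for $f\ne g$, the degenerate inseparable cases of the latter being handled by interpreting $\deg(f-g)$ as the length of $\ker(f-g)$. Equivalently, one can argue structurally: $\langle B_1,B_2\rangle$ is a unimodular hyperbolic plane $U$, hence a direct summand of $\SA$, and $U^{\perp}\cap\SA$ is spanned by the classes $[\Phi_{\omega_i}]-[B_1]-(\deg\omega_i)[B_2]$ with Gram matrix $\bigl(-\Tr(\omega_i\bar{\omega}_j)\bigr)$, a negative-definite lattice of rank $4$ and discriminant $25$ (isometric to the lattice in~\eqref{eq:GramR}); this recovers $\operatorname{disc}\SA=-25$ and again forces the index to be $1$.
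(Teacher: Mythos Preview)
Your proposal is correct and follows the same overall strategy as the paper: compute the $6\times 6$ Gram matrix, verify its determinant is $-25$, and conclude via the index relation $\operatorname{disc}M=n^{2}\operatorname{disc}S_A$ that the six classes span all of $S_A$.

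The only difference worth noting is in how the individual entries $\Phi_f\cdot\Phi_g$ are obtained. The paper simply records the Gram matrix~\eqref{eq:GramSA} and, in the surrounding text, offers two computational routes: the ``$\deg\theta$'' method of~\eqref{eq:degtheta} and the isomorphism $j\colon S_A\to H$ of Remark~\ref{algebra}. You instead use the classical identity $\Phi_f\cdot\Phi_g=\deg(f-g)=\deg f+\deg g-\Tr(f\bar g)$ directly, reducing everything to the trace data already tabulated in Section~\ref{sec:E}. This is slightly more elementary and self-contained than either of the paper's methods, and your closing structural remark---splitting off the hyperbolic plane $\langle B_1,B_2\rangle$ and identifying its orthogonal complement with $\bigl(-\Tr(\omega_i\bar\omega_j)\bigr)$, which is exactly the rank-$4$ lattice of~\eqref{eq:GramR}---is a nice bonus that the paper does not make explicit.
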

\begin{proof}
The intersection numbers $\intM{B_i, B_j}{\SA}$ 
are given by the following matrix:
\begin{equation}\label{eq:GramSA}
\left[ \begin {array}{cccccc} 0&1&1&1&2&2\\  1&0&1&1&
1&1\\  1&1&0&1&3&4\\  1&1&1&0&2&3
\\  2&1&3&2&0&2\\  2&1&4&3&2&0
\end {array} \right] .
\end{equation}
Since its determinant of  is $-25$, the classes $[B_1], \dots, [B_6]$ form a basis of $\SA$.
\end{proof}
\begin{remark}\label{algebra}
Let $\OOO   = {\End}(E)$ 
be as in Section~\ref{sec:E}.
Set $X  =E\times \{P_{\infty}\} + \{P_{\infty}\}\times E$. 
Then $X$ is a principal polarization on $A$.
For a divisor $L$ on $A$, we have a homomorphism
$$
\begin{array}{cccc}
   \varphi_{L} :  &A  &\to  &{\Pic}^{0}(A) \\
      & x & \mapsto & T_{x}^{*}L - L,
\end{array}
$$
where $T_{x}$ is the translation by $x \in A$ (see~Mumford~\cite{MR0282985}).
We see that $\varphi_{X}^{-1}\circ\varphi_{L}$ is an element of ${\End}(A) = M_{2}(\OOO )$.
We set 
$$
     H = \left\{
\left[
\begin{array}{cc}
a  & b\\
c & d
\end{array}
\right]
~\mid
~a, d \in {\Z},~b, c \in \OOO  ~\mbox{with}~ c = \bar{b}
\right\}.
$$
Then,
$$
\begin{array}{cccc}
j : & S_{A} & \to  & H \\
   & L  &\mapsto  & \varphi_{X}^{-1}\circ\varphi_{L}
\end{array}
$$
is a bijective homomorphism, and  
for $L_{1}, L_{2} \in S_A$ such that
$$
  j(L_{1}) =
\left[
\begin{array}{cc}
a_{1}  & b_{1} \\
c_{1} & d_{1}
\end{array}
\right],~
  j(L_{2}) =
\left[
\begin{array}{cc}
a_{2}  & b_{2} \\
c_{2} & d_{2}
\end{array}
\right],
$$
the intersection number  $\intM{L_{1}, L_{2}}{S_A}$ is given by
$$
\intM{L_{1}, L_{2}}{S_A} = a_{2}d_{1} + a_{1}d_{2} - c_{1}b_{2}
-c_{2}b_{1}.
$$
(see~Katsura~\cite{MR977760}, Katsura~and~Kondo~\cite{MR2862188}).
For two endomorphisms
$\alpha_{1}, \alpha_{2}\in \OOO$, by Katsura~\cite{MR977760}\;(also see Katsura~and~Kondo~\cite{MR2862188}), we have
$$
j((\alpha_{1}\times \alpha_{2})^{*} \Delta) =
\left[
\begin{array}{cc}
\bar{\alpha}_{1}\alpha_{1}  & \bar{\alpha}_{1}\alpha_{2}\\
\bar{\alpha}_{2}\alpha_{1} & \bar{\alpha}_{2}\alpha_{2}
\end{array}
\right].
$$
Now consider our basis $[B_{1}], \dots, [B_{6}]$ of $\SA$.
Since we have
\begin{eqnarray*}
&&B_{3}=  (-\id\times \id)^{*} \Delta,\;\;  B_{4} = (-\gamma\times \id)^{*} \Delta, \;\;
B_{5}= (-\phi_{E, 2}\times \id)^{*} \Delta,\\
&& B_{6}=(-\gamma\phi_{E, 2}\times \id)^{*} \Delta,
\end{eqnarray*}
we see that
$$
\begin{array}{l}
j(B_{1}) = 
\left[
\begin{array}{cc}
0  & 0\\
0 &  1
\end{array}
\right],
\quad j(B_{2})= 
\left[
\begin{array}{cc}
1  & 0\\
0 &  0
\end{array}
\right], 
\quad j(B_{3})=  
\left[
\begin{array}{cc}
1  & -1\\
-1 &  1
\end{array}
\right],\\
\\
 j(B_{4}) = 
\left[
\begin{array}{cc}
1  & -\gamma^5\\
-\gamma &  1
\end{array}
\right],
\quad j(B_{5})= 
\left[
\begin{array}{cc}
2  & \phi_{E, 2}\\
-\phi_{E, 2} &  1
\end{array}
\right], \\
\\
j(B_{6})=
\left[
\begin{array}{cc}
2  & -\phi_{E, 2}\gamma^2\\
-\gamma\phi_{E, 2} &  1
\end{array}
\right].
\end{array}
$$
Here, as an element in $\OOO$, we use $1$ for $\id$ and $-1$ for $\iota_{E}$.
Using these expressions, we can also calculate our Gram matrix \ref{eq:GramSA} easily.
\end{remark}
From now on,  we express elements of $\SA$ as  row vectors 
with respect to the basis  $[B_1], \dots, [B_6]$.
The matrix~\eqref{eq:GramSA} is then the Gram matrix of $\SA$
with respect to this basis.

%
%
%
%
\begin{remark}\label{rem:intersection}
Let $\eta: H \to A$ be as above.
Note that
we have 
\begin{equation}\label{eq:degeta}
\intM{\sGamma[\eta], B_1}{\SA}=\deg \eta_{2},
\quad
\intM{\sGamma[\eta], B_2}{\SA}=\deg \eta_{1}.
\end{equation}
By the method above,
we can calculate the vector representation of the class of $\sGamma[\eta]$ in $\SA$
with respect to the basis $[B_1], \dots, [B_6]$.
By the Gram matrix~\eqref{eq:GramSA},
we obtain the self-intersection number of $\sGamma[\eta]$ on $A$.
Then  $\sGamma[\eta]$ is smooth
(that is, $\eta$ induces an isomorphism from $H$ to $\sGamma[\eta]$)
if and only if 
\begin{equation}\label{eq:smooth}
\intM{\sGamma[\eta], \sGamma[\eta]}{\SA}
=2(\textrm{the genus of $H$}-1).
\end{equation}
%
In this case,  we also have
$$
\eta\inv (A_2)=\textrm{the set of fixed points of $\iota_H$},
$$
and hence we can easily obtain the set $\sGamma[\eta]\cap A_2$.
Thus we can calculate the class of the strict transform $\sGamma[\eta]_{\tilA}$ of $\sGamma[\eta]$ in $\StilA$.
\end{remark}
%
%
%
%
\erase{
\begin{example}
For the Frobenius endomorphism $\Fr\in \End(E)$, we have
$$
\begin{array}{lll}
\intM{\Phi_{\Fr}, B_1}{\SA}=5, &
\intM{\Phi_{\Fr}, B_2}{\SA}=1, &
\intM{\Phi_{\Fr}, B_3}{\SA}=6, \\
\intM{\Phi_{\Fr}, B_4}{\SA}=6, &
\intM{\Phi_{\Fr}, B_5}{\SA}=7, &
\intM{\Phi_{\Fr}, B_6}{\SA}=12.
\end{array}
$$
Since $\Phi_{\Fr} = (-\Fr \times \id)^{*}\Delta$, 
these intersection numbers can be calculated by Remarks~\ref{Frobenius} and~\ref{algebra}.
Hence we obtain
$[\Phi_{\Fr}]=[3, 8, -1, 0, 1, -2]$.
\end{example}
}
\begin{example}\label{example:aut36}
Note that $\Aut(E)$ is a cyclic group of order $6$
generated by $\gamma$. For integers $a$ and $b$,
the pull-back $(\gamma^a\times \gamma^b)^*\Phi_g$ of 
the graph $\Phi_g$ of $g\in \End(E)$ by
the action
$$
(\gamma^a\times \gamma^b): (P, Q)\mapsto (\gamma^a(P),  \gamma^b(Q))
$$
is equal to $\Phi_{\gamma^{-b}g\gamma^a}$.
Calculating the intersection numbers
$\intM{(\gamma^a \times \gamma^b)^* B_i, B_j}{\SA}$, we see that the action 
$(\gamma^a\times \gamma^b)^*$ on $\SA$ is given by
$$
[x_1, \dots, x_6]\mapsto [x_1, \dots, x_6]\cdot  G_1^a \cdot G_2^b,
$$
where
$$
G_1 =
\left[ \begin {array}{cccccc} 1&0&0&0&0&0\\  0&1&0&0&0
&0\\  0&0&0&1&0&0\\  1&1&-1&1&0&0
\\  2&3&-1&0&1&-1\\  1&1&0&-1&1&0
\end {array} \right],
\quad
G_2 =
\left[ \begin {array}{cccccc} 1&0&0&0&0&0\\  0&1&0&0&0
&0\\  1&1&1&-1&0&0\\  0&0&1&0&0&0
\\  1&2&0&0&1&-1\\  0&0&0&0&1&0
\end {array} \right].
$$
\end{example}
\begin{example}\label{example:flip}
In the same way, we see that 
the action of the involution $(P, Q)\mapsto (Q, P)$ of $A$ on $\SA$ is given by
$$
[x_1, \dots, x_6]\mapsto [x_1, \dots, x_6]
\left[ \begin {array}{cccccc} 0&1&0&0&0&0\\  1&0&0&0&0
&0\\  0&0&1&0&0&0\\  1&1&1&-1&0&0
\\  3&3&0&0&-1&0\\  4&4&-1&0&0&-1
\end {array} \right].
$$
\end{example}
\begin{remark}\label{rem:etabar}
Let $\eta: H \to A$ be as above,
and suppose that $\eta$ is an embedding
(that is,
the equality~\eqref{eq:smooth} holds).
Then the induced morphism
$$
\bar{\eta}: H/\gen{\iota_H}=\P^1 \to Y
$$
is an isomorphism from the $u$-line $H/\gen{\iota_H}$ to the $(-2)$-curve  
$\sGamma[\eta]_Y$ on $Y$.
The morphism $\bar{\eta}$ is calculated as follows.
Let $(x_1, y_1)$ and $(x_2, y_2)$ be the affine coordinates of the first and the second factor of $A=E\times E$.
Then the singular surface
$A/\gen{\iota_A}$ is defined by
$$
w^2=(x_1^3-1) (x_2^3-1),
$$
where the quotient morphism $\varpi: A\to A/\gen{\iota_A}$ is given by
$$
((x_1, y_1), (x_2, y_2))\mapsto (x_1, x_2, w)=(x_1, x_2, y_1 y_2).
$$
Then $\rho\circ \bar{\eta}: \P^1\to A/\gen{\iota_A}$
is given by the rational functions 
$$
(\rho\circ \bar{\eta})^*x_1=M_1(u),
\quad
(\rho\circ \bar{\eta})^*x_2=M_2(u),
\quad
(\rho\circ \bar{\eta})^*w=f_H(u) N_1(u)N_2(u).
$$
Let $P$ be a point of $A_2$.
Suppose that the image of $\rho\circ \bar{\eta}$ passes through the node $\varpi(P)$ of $A/\gen{\iota_A}$.
Let $Q\in H$ be the point that is mapped to $P$ by $\eta$, 
and let $Q\sprime\in H/\gen{\iota_H}$ be the image of $Q$ by 
the quotient map $H\to H/\gen{\iota_H}$.
The lift $\bar{\eta}: \P^1\to  Y$ of $\rho\circ\bar\eta$ at $Q\sprime$ is calculated as follows.
Let $T_{P, A}$ denote the tangent space to $A$ at $P$.
Then the $(-2)$-curve $\pi(E_P)=\rho\inv (\varpi(P))$ on $Y$ is canonically identified 
with the projective line $\P_*(T_{P, A})$ of $1$-dimensional linear subspaces of $T_{P, A}$,
and $\bar{\eta}(Q\sprime)\in \pi(E_P)$ corresponds to the image of 
$$
d_Q \eta : T_{Q, H} \to T_{P, A},
$$
where $T_{Q, H}$ is the tangent space to $H$ at $Q$.
Thus $\bar{\eta}(Q\sprime)$ is obtained by differentiating $\eta$ at $Q$.
In particular,
if $\eta: H\to A$ is defined over $\F_{25}$,
then we can calculate the list of $\F_{25}$-rational points on the $(-2)$-curve $\sGamma[\eta]_Y$ on $Y$.
\end{remark}
We consider the  hyperelliptic curves
defined by
$$
F : v^2=u^6-1,\quand
\quad
G : v^2=\sqrt{2}\, (u^{12}+2\,u^8+2\,u^4+1),
$$
and the morphisms
$$
\renewcommand{\arraystretch}{1.5}
\begin{array}{lll}
\phi_{E, 2}: E\to E && (u, v)\mapsto \left(\dfrac{2u^2+3 u +1}{u-1}, \;\; \dfrac{2\,\sqrt{2}\,  v\,(u^2+3u+3)}{(u-1)^2}\right), \\
\phi_{F , 2}: F \to E && (u,v)\mapsto \left( u^2, \;\; v\right), \\
\phi_{F , 3}: F \to E && (u,v)\mapsto \left(\dfrac{2\, u}{u^3-1},  \;\; \dfrac{ v\,(2\,u^3+1)}{(u^3-1)^2}\right), \\
\phi_{G , 3}: G \to E &&(u, v) \mapsto \left(\dfrac{4\,\sqrt{2}\,(u+3\sqrt{2}+4)^2\, (u+2\,\sqrt{2}+4)}{f}, \;\; \dfrac{(4+4\,\sqrt{2})\,v}{f^2}\right), \\
 && \textrm{where}\quad  f =(u+\sqrt{2})(u+4\,\sqrt{2}+1)\,(u+3\,\sqrt{2}+2), \\
\phi_{G , 4}: G \to E &&(u, v) \mapsto \left(\dfrac{u^4+(1+4\,\sqrt{2})u^2+2}{g}, \;\; \dfrac{v\,u}{g^2}\right), \\
 && \textrm{where}\quad  g =u^4+(1+2\sqrt{2})u^2+(4+\sqrt{2}).
\end{array}
$$
\begin{remark}\label{rem:invH}
Each of 
the five morphisms $\phi: H\to E$ above
satisfies $\iota_E\circ \phi=\phi\circ \iota_H$.
\end{remark}
\begin{remark}
A basis of the vector space ${\rm H}^{0}(G, \Omega^{1}_G)$ of regular 1-forms 
on the curve $G$ is given by
$$
\frac{dx}{y} -\frac{x^4dx}{y},~ \frac{xdx}{y},~\frac{x^3dx}{y},~\frac{x^2dx}{y},~
\frac{dx}{y} + \frac{x^4dx}{y}.
$$
With respect to this basis, the Cartier operator $\CCC$ is given by the matrix
$$
\left[
\begin{array}{c|c}
3 I_3  & O_{3,2} \\
\hline
O_{2,3} & O_{2,2}
\end{array}
\right],
\;
\text{\rm where $I_3$ is the $3\times 3$ identity matrix and $O_{a, b}$ is the $a\times b$ zero matrix.}
\erase{
\left[
\begin{array}{ccccc}
3 &  0 & 0 & 0 & 0  \\
0 &  3 & 0 & 0 & 0  \\
0 &  0 & 3 & 0 & 0  \\
0 &  0 & 0 & 0 & 0  \\
0 &  0 & 0 & 0 & 0
\end{array}
\right].
}
$$
Therefore, we have $\dim \Ker \CCC = 2$ and $\rank \CCC = 3$.
Hence, the Jacobian variety $J(G)$ of $G$ is isogenous to
the product of  a $3$-dimensional ordinary abelian variety and a superspecial
abelian surface $A$. 
In the same way, we see that the Cartier operator
is zero for the curve $F$ and that the Jacobian variety $J(F)$ 
of $F$ is isomorphic to $A$.
\end{remark}
\begin{remark}\label{rem:psis}
The Weierstrass points of  $F$ are $(u, v)=((3+2\,\sqrt{2})^\nu, 0)$ for $\nu=0, \dots, 5$.
%
%
The Weierstrass points of  $G$  are $(u, v)=(u_\nu, 0)$ for $\nu=0, \dots, 11$,
where $u_\nu$ are
$$
\pm \sqrt {2}, \;\; \pm 2\,\sqrt {2}, \;\; 1\pm \sqrt {2}, \;\; 2 \pm 2\,\sqrt {2}, \;\; 3 \pm 3\,\sqrt {2},\;\; 4 \pm 4\,\sqrt {2}.
$$
%
In particular,
let $E\sprime\to \P^1$
(resp.~$\bar{E}\sprime\to \P^1$, $F\sprime\to \P^1$, $G\sprime\to \P^1$)
be the double covering  branched at the points in $P_4$ (resp.~$\bar{P}_4$, $P_6$, $P_{12}$) defined above Theorem~\ref{cor:six}.
Then $E\sprime$ and $\bar{E}\sprime$ are isomorphic to $E$ over $\F_{25}$,
$F\sprime$ is isomorphic to  $F$ over $\F_{25}$, and $G\sprime$ is isomorphic to $G$  over $\F_{25}$.
\end{remark}
We also consider the automorphisms
$$
\renewcommand{\arraystretch}{1.4}
\begin{array}{lll}
\gamma: E\to E  && (u, v)\mapsto (\omega u, -v), \\
h_{F}: F \to F  && (u, v)\mapsto \left(\dfrac{2\sqrt{2}\, u +4 }{u+2\sqrt{2}},\;\; \dfrac{v}{(u+2\sqrt{2})^3}\right), \\
h_{F}\sprime: F \to F  && (u, v)\mapsto \left(\dfrac{2\sqrt{2}\, u +1}{u+3\sqrt{2}},\;\; \dfrac{v}{(u+3\sqrt{2})^3}\right), \\
h_{G}: G \to G  && (u, v)\mapsto \left(\dfrac{2u+3}{u+1},\;\; \dfrac{4v}{(u+1)^6}\right).
\end{array}
$$
%
Note that 
the morphisms $\phi_{E, 2}$ and $\gamma$  have already appeared in Section~\ref{sec:E}.
%
\par
Let $\tau$  denote 
the automorphism $(P, Q)\mapsto (Q, \iota_E(P))$ of $A$.
Note that $\tau$ lifts to an automorphism of $\tilA$
and its action on $\StilA$
is obtained from  Examples~\ref{example:aut36},~\ref{example:flip}.
For a curve $\varGamma$ on $A$,
we denote by $\TTT(\varGamma)$
the set of translations of $\varGamma$ by points in $A_2$.
Then we define sets of curves on $A$ by 
$$
\renewcommand{\arraystretch}{1.4}
\begin{array}{lll}
\LLL_{01}& =&\TTT(\sGamma[(\phi_{F , 2}, \phi_{F , 2} h_{F})]\,),\\
\LLL_{02}& =&\TTT(\sGamma[(\phi_{F , 3}, \phi_{F , 3} h_{F}\sprime]\,),\\
\hspace*{0.5cm}\LLL_{10, (4,3)} & =& \TTT(\sGamma[(\phi_{G , 4}, \phi_{G , 3})]\,), \\ 
\hspace*{0.5cm}\LLL_{10, (4,4)} & =& \TTT(\sGamma[(\gamma^2 \phi_{G , 4},  \gamma \phi_{G , 4} h_G)]\,), \\
\LLL_{10} & =&\LLL_{10, (4,3)} \cup \tau(\LLL_{10, (4,3)}) \cup \LLL_{10, (4,4)} \cup \tau(\LLL_{10, (4,4)}), \\
\hspace*{0.5cm}\LLL_{11, (1,2)} & =&\TTT(\sGamma[(\gamma^2, \gamma^2 \phi_{E, 2})]\,), \\
\hspace*{0.5cm}\LLL_{11, (2,2)} & =&\TTT(\sGamma[ ( \phi_{E, 2} \gamma, \gamma  \phi_{E, 2})]\,), \\
\LLL_{11} & =&\LLL_{11, (1,2)} \cup \tau(\LLL_{11, (1,2)}) \cup \LLL_{11, (2,2)} \cup \tau(\LLL_{11, (2,2)}), \\
\LLL_{12} & =& \TTT(B_1) \cup \TTT(B_2) \cup \TTT(B_4) \cup \TTT(\sGamma[(\id, \gamma^2)]\,). 
\end{array}
$$
Using the method above, 
we have the following list of intersection numbers.
\begin{center}
\begin{tabular}{| l |c|c|c|c|c|c|}
\hline
     & $B_{1}$ & $B_{2}$ & $B_{3}$ & $B_{4}$ & $B_{5}$ & $B_{6}$\\
\hline
$\sGamma[(\phi_{F , 2}, \phi_{F , 2} h_{F})]$ 
 & 2& 2 &4  & 2& 8 & 7 \\
\hline
$\sGamma[(\phi_{F , 3}, \phi_{F , 3} h_{F}\sprime)]$ 
&3 & 3 & 6& 3& 5& 12 \\
\hline
$\sGamma[(\phi_{G , 4}, \phi_{G , 3})]$
 & 3 & 4 & 7&  4&  14& 15 \\
\hline
$\sGamma[(\gamma^2 \phi_{G , 4},  \gamma \phi_{G , 4} h_G)]$ 
&  4& 4 & 7& 3 & 14 & 16\\
\hline
$\sGamma[(\gamma^2,  \gamma^2 \phi_{E , 2})]$
 & 2 & 1 &3 & 2 & 3 & 7 \\
\hline
$\sGamma[( \phi_{E, 2} \gamma, \gamma  \phi_{E, 2})]$
& 2& 2 & 5& 2& 6& 8\\
\hline
$\sGamma[(\id, \gamma^2)]$
 & 1 & 1 & 3 & 1 & 2  & 2 \\
\hline
\end{tabular}
\end{center}
Using this table and the Gram matrix~\eqref{eq:GramSA},
we obtain the following vector representations of classes of these curves.
$$
\renewcommand{\arraystretch}{1.5}
\begin{array}{lll}
{}[\sGamma[(\phi_{F , 2}, \phi_{F , 2} h_{F})]\,] & = & [2,3,-1,2,-1,0] \\
{}[\sGamma[(\phi_{F , 3}, \phi_{F , 3} h_{F}\sprime)]\,] & = & [4,6,-2,3,-1,-1] \\
{}[\sGamma[(\phi_{G , 4}, \phi_{G , 3})]\,] & = & [5,6,-2,3,-1,-1] \\
{}[\sGamma[(\gamma^2 \phi_{G , 4}, \gamma \phi_{G , 4} h_G)]\,] & = &  [4,6,-2,4,-1,-1] \\
{}[\sGamma[(\gamma^2, \gamma^2 \phi_{E, 2})]\,] & = &  [2,4,-1,1,0,-1]\\
{}[\sGamma[ ( \phi_{E, 2} \gamma, \gamma  \phi_{E, 2})]\,]&=& [3,4,-2,2,0,-1]\\
{}[\sGamma[(\id,  \gamma^2)]\,]&=&[1,1,-1,1,0,0].
\end{array}
$$
\begin{remark}\label{rem:smooth}
In particular,
we see  that these curves are smooth
by confirming~\eqref{eq:smooth}. 
\end{remark}
\begin{remark}
Incidentally, by the vector representations of classes of our curves, we have
$$
\begin{array}{l}
j(\Gamma[(\phi_{F,2}, \phi_{F, 2} h_{F})])=
\left[
\begin{array}{cc}
2  & 1 + 2\gamma^{2} - \phi_{E, 2}\\
1 - 2\gamma +  \phi_{E, 2} &  2
\end{array}
\right], \\
\\
j(\Gamma[(\phi_{F,3}, \phi_{F, 3} h_{F}')])=
\left[
\begin{array}{cc}
3  & 2 + 3\gamma^{2} - \phi_{E, 2} + \phi_{E, 2}\gamma^{2}\\
1 - 3\gamma +  \phi_{E, 2} + \gamma\phi_{E, 2} &  3
\end{array}
\right], \\
\\
j(\Gamma[(\phi_{G,4}, \phi_{G, 3})])=
\left[
\begin{array}{cc}
3  & 1 + 3\gamma^{2} - \phi_{E, 2} + \phi_{E, 2}\gamma^{2} \\
1 - 3\gamma +  \phi_{E, 2} +  \gamma\phi_{E, 2} &  4
\end{array}
\right], \\
\\
j(\Gamma[(\gamma^{2}\phi_{G,4}, \gamma\phi_{G, 4} h_{G})])=
\left[
\begin{array}{cc}
4  & 2 + 4\gamma^{2} - \phi_{E, 2} + \phi_{E, 2}\gamma^{2} \\
2 - 4\gamma +  \phi_{E, 2} +  \gamma\phi_{E, 2} &  4
\end{array}
\right], \\
\\
j(\Gamma[(\gamma^{2}, \gamma^{2}\phi_{E, 2})])=
\left[
\begin{array}{cc}
2  & 1 + \gamma^{2} + \phi_{E, 2}\gamma^{2}\\
1 - \gamma +  \gamma\phi_{E, 2} &  1
\end{array}
\right], \\
\\
j(\Gamma[(\phi_{E,2}\gamma, \gamma\phi_{E, 2})])=
\left[
\begin{array}{cc}
2  & 2 + 2\gamma^{2} - \phi_{E, 2}\gamma^{2}\\
2 - 2\gamma +  \gamma\phi_{E, 2} &  2
\end{array}
\right], \\
\\
j(\Gamma[({\id}, \gamma^{2})])=
\left[
\begin{array}{cc}
1  & \gamma\\
-\gamma^{2} &  1
\end{array}
\right]. 
\end{array}
$$
We can also use these expressions to calculate the intersection numbers.
\end{remark}
Now we state our main result of this section.
\begin{theorem}\label{thm:sec9main}
For  $\nu i=01,02,10,11,12$, 
the set  
$$
\SSS_{\nu i} =\set{\Gamma_Y}{\Gamma\in \LLL_{\nu i}}
$$
is a set of disjoint $16$  smooth rational curves on $Y$.
Moreover, together with the set 
$\SSS_{00}$ of the  images of the $(-1)$-curves  $E_P$  for $P\in A_2$ by $\pi: \tilA\to Y$,
the six sets  
$\SSS_{00}, \SSS_{01}, \SSS_{02}, \SSS_{10}, \SSS_{11}, \SSS_{12}$
 satisfy the conditions {\rm (a)}, {\rm (b)} and  {\rm (c)} in Theorem~\ref{thm:six} and 
 possess the properties in Theorem~\ref{cor:six}.
\end{theorem}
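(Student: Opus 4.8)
The plan is to reduce everything to the lattice $\StilA = \SA\oplus\bigoplus_{P\in A_2}\Z[E_P]$ by computing, for each of the finitely many curves $\varGamma = \sGamma[\eta]$ occurring in the definitions of the $\LLL_{\nu i}$, the class $[\varGamma_{\tilA}]\in\StilA$ of its strict transform, and then reading off all intersection numbers on $Y$ from lattice theory. I would first record the dictionary between $A$ and $Y$. Each such $\varGamma$ is the image of an embedding $\eta\colon H\to A$ with $H\in\{E,F,G\}$ satisfying $\eta_i\circ\iota_H = \iota_E\circ\eta_i$, so $\varGamma_{\tilA}\cong H$ and $\pi\colon\varGamma_{\tilA}\to\varGamma_Y$ is the quotient by $\iota_H$; hence $\varGamma_Y$ is a smooth rational curve. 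From $[\varGamma_{\tilA}] = b^*[\varGamma]-\sum_{P\in\varGamma\cap A_2}[E_P]$, $\pi^*\varGamma_Y = \varGamma_{\tilA}$, and $\pi^*x\cdot\pi^*y = 2\,(x\cdot y)$ one gets $2\,(\varGamma_Y\cdot\varGamma_Y')_{\SY} = \intM{[\varGamma_{\tilA}],[\varGamma_{\tilA}']}{\StilA}$, while $(\varGamma_Y\cdot\pi(E_P))_{\SY}$ equals $1$ if $P\in\varGamma\cap A_2$ and $0$ otherwise; in particular $2\,(\varGamma_Y)^2 = \varGamma^2-|\varGamma\cap A_2| = (2g(H)-2)-(2g(H)+2) = -4$, and $\pi(E_P)^2 = -2$, $\pi(E_P)\cdot\pi(E_Q) = 0$ for $P\ne Q$. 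So everything is reduced to knowing, for each seed curve, (i) its class in $\SA$ and (ii) the subset $\varGamma\cap A_2\subset A_2$.

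For (i) the classes are already tabulated in the list preceding Remark~\ref{rem:smooth} by the degree-of-$\tilde\theta$ method, and smoothness (hence the genus of $H$) is guaranteed by Remark~\ref{rem:smooth} via~\eqref{eq:smooth}. For (ii) I would compute $\varGamma\cap A_2 = \eta(\mathrm{Fix}(\iota_H))$ explicitly: $\mathrm{Fix}(\iota_H)$ is the set of Weierstrass points of $F$ or $G$ recorded in Remark~\ref{rem:psis} (or the $2$-torsion of $E$ when $H = E$), and one evaluates the defining morphisms there, differentiating $\eta$ as in Remark~\ref{rem:etabar} at the Weierstrass points mapping to nodes of $A/\gen{\iota_A}$ so as to place them on the correct exceptional curve $\pi(E_P)$. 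This pins down $[\varGamma_{\tilA}]\in\StilA$ completely. Then I would propagate: a translation by $t\in A_2$ fixes the $\SA$-part and permutes the $[E_P]$ by $P\mapsto P+t$, while $\tau$ (which normalizes $\TTT$) acts on $\StilA$ through the matrices of Examples~\ref{example:aut36} and~\ref{example:flip}; this yields all $96$ classes. Along the way one verifies $|\SSS_{\nu i}| = 16$: the set-stabilizer in $A_2$ of $\varGamma\cap A_2$ is exactly the $\TTT$-stabilizer of $\varGamma$ (two distinct irreducible curves of the same class on $\tilA$ would meet in $\varGamma_{\tilA}^2 = -4 < 0$, which is impossible), so for $\LLL_{01},\LLL_{02}$ the generating curve has trivial stabilizer, while for $\LLL_{10},\LLL_{11},\LLL_{12}$ each of the four constituent $\TTT$-orbits has exactly four members (an order-$4$ stabilizer, i.e. $\varGamma\cap A_2$ is a union of cosets of an order-$4$ subgroup) and the four orbits have pairwise distinct $\SA$-classes, hence are disjoint.

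With all $96$ classes in hand, the pairwise disjointness inside each $\SSS_{\nu i}$ and the incidence patterns (a), (b), (c) of Theorem~\ref{thm:six} — including those involving $\SSS_{00}$, via the formula for $\varGamma_Y\cdot\pi(E_P)$ — become a finite check of Gram pairings against~\eqref{eq:GramSA} and the diagonal $-[E_P]$ contributions, which I would shorten using $\TTT$-equivariance and the action of $(\Z/2\Z)^4\rtimes(\Z/3\Z\times\mathfrak{S}_4)$ on the whole configuration, so that only a bounded number of representative pairs per incidence type need to be evaluated. For Theorem~\ref{cor:six}, all the morphisms and automorphisms appearing in the definitions of the $\LLL_{\nu i}$ are defined over $\F_{25}$, so the model and all $96$ curves descend; the $\F_{25}$-rationality of the intersection points follows from Remark~\ref{rem:etabar} (the point of $\varGamma_Y$ on $\pi(E_P)$ is the image of the derivative of an $\F_{25}$-morphism, and the remaining intersection points are images of $\F_{25}$-rational Weierstrass points of the partner curves). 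Since each $\varGamma_Y$ is $H/\gen{\iota_H}\cong\P^1$ with $H\cong F,G$ or $E$, Remark~\ref{rem:psis} identifies the branch divisor of $\varGamma_Y$ (a set of $6$, $12$, or $4$ points, coinciding with the locus where $\varGamma$ meets the nodes) with $P_6$, $P_{12}$, or $P_4$ resp.\ $\bar P_4$ under a suitable coordinate; and since $6+12+4+4 = 26 = |\P^1(\F_{25})|$, once the ``type'' of every $\F_{25}$-point of $\varGamma_Y$ (as an intersection with one of the other five families) is fixed by a direct computation, the partition of $\varGamma_Y(\F_{25})$ and the isomorphism $\varphi$ of Theorem~\ref{cor:six}\,(iii) are forced.

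The step I expect to be the main obstacle is (ii) for the genus-$5$ family $\LLL_{10}$: one must explicitly determine the $12$-point subsets $\varGamma\cap A_2$ for the $G$-generators (this is exactly where the $\F_{25}$-arithmetic of the Weierstrass points of $G$ in Remark~\ref{rem:psis} is essential, many of them landing on nodes and requiring the differentiation of Remark~\ref{rem:etabar}) and then confirm that the four $\tau$-twisted $\TTT$-orbits assemble into precisely $16$ pairwise-disjoint curves rather than overlapping or over-counting. Once these subsets are known, the remaining verifications — disjointness within each family, the configurations (a)--(c), and the refined statements of Theorem~\ref{cor:six} — are routine lattice and coordinate computations.
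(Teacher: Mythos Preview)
Your proposal is correct and follows essentially the same approach as the paper's proof: compute each $[\varGamma_{\tilA}]\in\StilA$ from the tabulated $\SA$-component together with the explicit set $\varGamma\cap A_2$, verify (a)--(c) via the formula $\intM{[\varGamma_Y],[\varGamma'_Y]}{\SY}=\tfrac12\intM{[\pi^*\varGamma_Y],[\pi^*\varGamma'_Y]}{\StilA}$, and check Theorem~\ref{cor:six} by listing $\varGamma_Y(\F_{25})$ through Remark~\ref{rem:etabar}. The paper simply performs this direct computation (working one curve in $\LLL_{10,(4,4)}$ in full and deferring the remaining data to~\cite{ShimadaCompData}); your stabilizer argument for $|\SSS_{\nu i}|=16$ and use of $\TTT$- and $\tau$-equivariance are organizing shortcuts not made explicit there, but the method is the same.
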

\begin{proof}
Let $\SSS$ be the union of the six sets $\SSS_{00}, \SSS_{01}, \SSS_{02}, \SSS_{10}, \SSS_{11}, \SSS_{12}$.
We have already seen that the $96$ curves in $\SSS$ are $(-2)$-curves on $Y$ 
(see Remarks~\ref{rem:invH} and~\ref{rem:smooth}).
Since the $96$ rational curves in $\SSS$ are presented explicitly,
we can prove Theorem~\ref{thm:sec9main} by direct computation.
\par
By the method in Remark~\ref{rem:intersection},
we can calculate the classes
$[\Gamma_Y] \in S_Y$ of the $96$ rational curves $\Gamma_Y\in \SSS$:
more precisely,
we calculate the vector representations of 
the classes 
$[\pi^* (\Gamma_Y)]$
of the curves $\pi^* (\Gamma_Y)$ on $\tilA$
with respect to the basis $[B_1], \dots, [B_6]$ and $[E_P]$ ($P\in A_2$).
Using the Gram matrix~\eqref{eq:GramSA} and the formula
$$
\intM{[\Gamma_Y], [\Gamma\sprime_Y]}{S_Y}=
\frac{1}{2} \intM{[\pi^*(\Gamma_Y)], [\pi^*(\Gamma\sprime_Y)]}{S_{\tilA}},
$$
we can calculate the intersection numbers among the curves in $\SSS$.
It follows that the six sets $\SSS_{\nu i}$ 
satisfy the conditions {\rm (a)}, {\rm (b)} and  {\rm (c)} in Theorem~\ref{thm:six}.
\par
Next we calculate the list $\Gamma_Y(\F_{25})$
of $\F_{25}$-rational points by the method in Remark~\ref{rem:etabar}. 
It turns out that
$$
\intM{[\Gamma_Y], [\Gamma\sprime_Y]}{S_Y}=|\Gamma_Y(\F_{25}) \cap \Gamma\sprime_Y(\F_{25})|
$$
for any pair $\Gamma_Y, \Gamma\sprime_Y$ of distinct curves in $\SSS$.
Therefore any intersection point of curves in $\SSS$ is an $\F_{25}$-rational point.
Moreover the properties in Theorem~\ref{cor:six} can be checked directly.
\par
For example,
we consider a curve $\sGamma[\eta] \in \LLL_{10, (4,4)}$,
where 
the morphism $\eta: G\to A$ is given by 
\begin{eqnarray*}
\eta^*x_1 &=& {\frac { \left( 2+2\,\sqrt {2} \right) \left( {u}^{2}+ \left( 4+3\,\sqrt {2} \right) u+4\,\sqrt {2
} \right)  \left( {u}^{2}+ \left( 1+2\,\sqrt {2} \right) u+4\,\sqrt {2
} \right)   }{ \left( u+4\,\sqrt {2}
 \right)  \left( u+3\,\sqrt {2}+3 \right)  \left( u+2\,\sqrt {2}+2
 \right)  \left( u+\sqrt {2} \right) }}, \\
 \eta^*y_1 &=& {\frac {uv}{ \left( u+4\,
\sqrt {2} \right) ^{2} \left( u+3\,\sqrt {2}+3 \right) ^{2} \left( u+2
\,\sqrt {2}+2 \right) ^{2} \left( u+\sqrt {2} \right) ^{2}}}, \\
 \eta^*x_2 &=&  {\frac 
{ \left( 4+3\,\sqrt {2} \right) \left( {u}^{2}+ \left( 3+4\,\sqrt {2} \right) u+3\,\sqrt {2}+4
 \right)  \left( {u}^{2}+ \left( 1+4\,\sqrt {2} \right) u+4\,\sqrt {2}
+3 \right)   }{ \left( u+4+\sqrt {2}
 \right)  \left( u+\sqrt {2}+1 \right)  \left( u+2\,\sqrt {2}+2
 \right)  \left( u+3\,\sqrt {2}+2 \right) }},\\
 \eta^*y_2 &=&  {\frac { \left( 1+\sqrt {2} \right) v \left( u+4
 \right)  \left( u+1 \right)  }{ \left( u+
4+\sqrt {2} \right) ^{2} \left( u+\sqrt {2}+1 \right) ^{2} \left( u+2
\,\sqrt {2}+2 \right) ^{2} \left( u+3\,\sqrt {2}+2 \right) ^{2}}}.
\end{eqnarray*}
%
The vector representation of $[\sGamma[\eta]_{\tilA}]\in S_{\tilA}$ is 
$$
[\sGamma[\eta]_{\tilA}]=[4,6,-2,4,-1,-1]-\sum_{P\in T[\eta]}[E_P],
$$
where $[4,6,-2,4,-1,-1] \in S_A$ is written with respect to $[B_1], \dots, [B_6]$,
and
$$
T[\eta]=\{P_{{\infty \infty }},P_{{\infty 0}},
P_{{\infty 1}},P_{{\infty 2}},
P_{{0\infty }},P_{{00}},P_{{01}},
P_{{02}},P_{{1\infty }},
P_{{12}},P_{{2\infty }},P_{{22}}\}.
$$
Here $P_{\alpha\beta}$ denotes $(P_\alpha, P_\beta)\in A_2$ for $\alpha, \beta\in \{\infty, 0, 1, 2\}$
(see~Section~\ref{sec:E}).
The induced isomorphism $\bar{\eta}$ from the $u$-line $\P^1=G/\gen{\iota_G}$ to 
the $(-2)$-curve $\sGamma[\eta]_Y\in \SSS_{10}$ induces the  bijection between the sets
of $\F_{25}$-rational points given in Table~\ref{table:examplebareta}.
In this table, 
the point $\bar{\eta}(u)$ is written by the following method:
If $\bar{\eta}(u)$ is not 
on the exceptional divisor of $\rho$,
then the 
coordinates $[x_1, x_2, w]$ of $\bar{\eta}(u)$ on 
$A/\gen{\iota_A}$ defined by  $w^2=(x_1^3-1)(x_2^3-1)$
 is given. (See Remark~\ref{rem:etabar}.)
If $\bar{\eta}(u)$ is 
on the $(-2)$-curve  $\pi(E_P)=\rho\inv(\varpi(P))$
corresponding to $P\in A_2$,
then the point $\bar{\eta}(u)$  is written by the 
coordinates $[[x_1, x_2], [\xi_0, \xi_1]]$,
where $[\xi_0, \xi_1]$ is the homogeneous coordinates
on $\pi(E_P)=\rho\inv (\varpi(P))\cong \P_*(T_{P, A})$ with respect 
to the basis $\tilde\theta_P, \tilde\theta_P$ of $T_{P, A}$,
where $ \tilde\theta$ is a non-zero  invariant vector field on $E$,
which is unique up to scalar multiplications.
\begin{table}
{\small
$$
\begin{array}{lcl}
\bar{\eta}(\infty) &=&[2+2\,\sqrt{2}, 4+3\,\sqrt{2}, 0], \\
\bar{\eta}(0) &=&[2+3\,\sqrt{2}, 4+3\,\sqrt{2}, 0], \\
\bar{\eta}(1) &=&[1+3\,\sqrt{2}, 1, 0], \\
\bar{\eta}(2) &=&[1+3\,\sqrt{2}, 4+3\,\sqrt{2}, 4+3\,\sqrt{2}], \\
\bar{\eta}(3) &=&[1+3\,\sqrt{2}, 4+3\,\sqrt{2}, 1+2\,\sqrt{2}], \\
\bar{\eta}(4) &=&[1+3\,\sqrt{2}, 2+3\,\sqrt{2}, 0], \\
\bar{\eta}(\sqrt{2}) &=&[[\infty, 1], [1, 2\,\sqrt{2}]], \\
\bar{\eta}(1+\sqrt{2}) &=&[[2+3\,\sqrt{2}, 2+2\,\sqrt{2}], [1, 2]], \\
\bar{\eta}(2+\sqrt{2}) &=&[2\,\sqrt{2}, 2+\sqrt{2}, 3], \\
\bar{\eta}(3+\sqrt{2}) &=&[4+4\,\sqrt{2}, 4+\sqrt{2}, 3], \\
\bar{\eta}(4+\sqrt{2}) &=&[[2+2\,\sqrt{2}, 2+2\,\sqrt{2}], [1, 4+\sqrt{2}]], \\
\bar{\eta}(2\,\sqrt{2}) &=&[[1, 2+3\,\sqrt{2}], [1, 4+2\,\sqrt{2}]], \\
\bar{\eta}(1+2\,\sqrt{2}) &=&[3\,\sqrt{2}, 2+\sqrt{2}, 1+\sqrt{2}], \\
\bar{\eta}(2+2\,\sqrt{2}) &=&[[\infty, 2+2\,\sqrt{2}], [1, 2\,\sqrt{2}]], \\
\bar{\eta}(3+2\,\sqrt{2}) &=&[[1, \infty], [1, 2+\sqrt{2}]], \\
\bar{\eta}(4+2\,\sqrt{2}) &=&[3+4\,\sqrt{2}, 4+\sqrt{2}, 4+4\,\sqrt{2}], \\
\bar{\eta}(3\,\sqrt{2}) &=&[[1, 1], [1, \sqrt{2}]], \\
\bar{\eta}(1+3\,\sqrt{2}) &=&[3+4\,\sqrt{2}, 2+4\,\sqrt{2}, 1+4\,\sqrt{2}], \\
\bar{\eta}(2+3\,\sqrt{2}) &=&[[1, 2+2\,\sqrt{2}], [1, \sqrt{2}]], \\
\bar{\eta}(3+3\,\sqrt{2}) &=&[[\infty, \infty], [1, 4+2\,\sqrt{2}]], \\
\bar{\eta}(4+3\,\sqrt{2}) &=&[3\,\sqrt{2}, 3+4\,\sqrt{2}, 3], \\
\bar{\eta}(4\,\sqrt{2}) &=&[[\infty, 2+3\,\sqrt{2}], [1, 3+4\,\sqrt{2}]], \\
\bar{\eta}(1+4\,\sqrt{2}) &=&[[2+2\,\sqrt{2}, \infty], [1, 1]], \\
\bar{\eta}(2+4\,\sqrt{2}) &=&[4+4\,\sqrt{2}, 2+4\,\sqrt{2}, 4+4\,\sqrt{2}], \\
\bar{\eta}(3+4\,\sqrt{2}) &=&[2\,\sqrt{2}, 3+4\,\sqrt{2}, 1+4\,\sqrt{2}], \\
\bar{\eta}(4+4\,\sqrt{2}) &=&[[2+3\,\sqrt{2}, \infty], [1, 2+2\,\sqrt{2}]]
\end{array}
$$
}
\caption{The map $\bar{\eta}$ on $\F_{25}$-rational points}\label{table:examplebareta}
\end{table}%
\par
We put $\varGamma =\sGamma[\eta]_Y$,
and present the four subsets $\varGamma_{1}, \varGamma_{00}, \varGamma_{01}, \varGamma_{02}$
of $\varGamma(\F_{25})$ in Theorem~\ref{cor:six}.
The set  $\varGamma_{00}$ of $12$ points on the exceptional divisor of $\rho$ is easily obtained from Table~\ref{table:examplebareta}.
The other sets 
are given as follows:
\begin{eqnarray*}
\bar\eta\inv (\varGamma_{1}) &=& \{\infty ,0,1,2,3,4\}, \\
\bar\eta\inv (\varGamma_{01}) &=& \{3+\sqrt {2},4+2\,\sqrt {2},1+3\,\sqrt {2},2+4\,\sqrt {2}\}, \\
\bar\eta\inv (\varGamma_{02}) &=& \{2+\sqrt {2},1+2\,\sqrt {2},4+3\,\sqrt {2},3+4\,\sqrt {2}\}.
\end{eqnarray*}
For example, the  unique  $(-2)$-curve in $\SSS_{11}$ passing through $\bar\eta(\infty) \in \varGamma_1$ is $\sGamma[\eta\sprime]_Y$,
where 
$\eta\sprime: E\to A$ is  given by 
$$
\left[
\left[{\frac {{u}^{2}+ \left( 1+3\,\sqrt {2} \right) u+2\,\sqrt {2}+1}{
 \left( u+3\,\sqrt {2}+4 \right) ^{2}}},{\frac { \left( 4+2\,\sqrt {2} \right) v  \left( u+2\,\sqrt {2}
+4 \right) }{ \left( u+3\,\sqrt {2}+4
 \right) ^{3}}}
 \right],
 \left[\left( 2+2\,\sqrt {2} \right) u  ,4\,v  \right] \right],
$$
and we have $\bar\eta\sprime(1+3\sqrt{3})=\bar\eta(\infty)$,
while 
the unique $(-2)$-curve in $\SSS_{12}$ passing through $\bar\eta(\infty) \in \varGamma_1$  
is $\sGamma[\eta\spprime]_Y$,
where 
$\eta\spprime: E\to A$ is  given by 
$$
[[2+2\,\sqrt {2},0],[u,v]], 
$$
and we have $\bar\eta\spprime(4+3\,\sqrt{2})=\bar\eta(\infty)$.
The  unique  $(-2)$-curve in $\SSS_{01}$ passing through $\bar\eta(3+\sqrt {2}) \in \varGamma_{01}$ is $\sGamma[\xi]_Y$,
where 
$\xi: F\to A$ is  given by
$$
\left[\left[{u}^{2},v\right], 
\left[{\frac {3\, \left( u+\sqrt {2} \right) ^{2}}{ \left( u+2
\,\sqrt {2} \right)^{2}}},{\frac {v}{ \left( u+2\,\sqrt {2} \right) ^
{3}}}\right]\right]
$$ 
and we have $\bar\xi(4+3\,\sqrt{2})=\bar\eta(3+\sqrt {2})$.
\erase{
The  unique  $(-2)$-curve in $\SSS_{02}$ passing through $\bar\eta(2+\sqrt {2}) \in \varGamma_{02}$ is $\sGamma[\xi\sprime]_Y$,
where 
$\xi\sprime: F\to A$ is  given by
\begin{eqnarray*}
&&\left[\left[
{\frac { \left( 1+4\,\sqrt {2} \right) \left( {u}^{3}+u \left( 2+2\,\sqrt {2} \right) +4 \right) 
 }{ \left( u+4+\sqrt {2} \right) ^{2}
 \left( u+3\,\sqrt {2}+2 \right) }},
{\frac { 4\,v\, \left( u+4\,\sqrt {2}+4 \right)  \left( u+2 \right) }{ \left( u+3\,\sqrt {2}+2 \right) ^{2}
 \left( u+4+\sqrt {2} \right) ^{3}}}\right], \right. \\ &&
\left.\left[{\frac {\left( 3+3\,\sqrt {2} \right) \left( {u}^{3}+
 \left( 2+3\,\sqrt {2} \right) {u}^{2}+2\,u+1+\sqrt {2} \right) 
  }{ \left( u+3\,\sqrt {2}+2 \right) 
 \left( u+4\,\sqrt {2}+2 \right) ^{2}}},{\frac { 2\,\sqrt {2}\,v\,\left( u+3
 \right)  \left( u+2\,\sqrt {2}+4 \right) }{ \left( u+4\,
\sqrt {2}+2 \right) ^{3} \left( u+3\,\sqrt {2}+2 \right) ^{2}}}
\right]\right]
\end{eqnarray*}
and we have $\bar\xi\sprime(4+2\,\sqrt {2})=\bar\eta(3+\sqrt {2})$.
}
\par
The details of  these data for all $96$ curves in $\SSS$ are presented in~\cite{ShimadaCompData}.
\end{proof}
We give a remark about  $(16_r)$-configurations on a $K3$ surface in general.
\begin{proposition}\label{prop,g}
Assume that the characteristic $p$ of the base field is $\neq 2$.
No abelian surfaces contain any non-singular hyperelliptic curve of genus 
greater than or equal to $6$.
\end{proposition}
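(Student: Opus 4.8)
The plan is to compare the hyperelliptic involution $\iota_C$ of $C$ with the inversion $\iota_A=[-1]_A$ of $A$. Pick a Weierstrass point $p_0$ of $C$; after translating $A$ we may assume $p_0=0$. The inclusion $C\hookrightarrow A$ sending $p_0$ to $0$ extends, by the universal property of the Jacobian $J(C)=\operatorname{Alb}(C)$, to a homomorphism $\psi\colon J(C)\to A$ with $\psi\circ u=(\text{the inclusion})$, where $u\colon C\to J(C)$, $x\mapsto[x-p_0]$, is the Abel--Jacobi map. Because $p_0$ is a Weierstrass point, $x+\iota_C(x)\sim 2p_0$ on $C$, hence $u\circ\iota_C=[-1]_{J(C)}\circ u$; applying the homomorphism $\psi$ gives $(\text{incl})\circ\iota_C=\iota_A\circ(\text{incl})$. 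Thus $C$ is $\iota_A$-stable and $\iota_A|_C$ is the hyperelliptic involution, so the $2g+2$ Weierstrass points of $C$ (there are exactly $2g+2$, since $p\ne 2$ forces the double cover $C\to\mathbb{P}^1$ to be separable) are precisely the points of $C\cap A[2]$. As $p\ne 2$ we have $|A[2]|=16$, and therefore $2g+2\le 16$, i.e.\ $g\le 7$.

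To rule out $g=6$ and $g=7$, set $T:=C\cap A[2]$ (so $|T|=2g+2$) and $L:=\mathcal{O}_A(C)$. Since $C$ is irreducible with $C^2=2g-2>0$, Hodge index together with the absence of curves of negative self-intersection on an abelian surface shows that $L$ is ample; write its type as $(d_1,d_2)$ with $d_1\mid d_2$, so $d_1 d_2=\tfrac12 C^2=g-1$ and $|\ker\phi_L|=\chi(L)^2=(g-1)^2$. The divisor stabilizer $\operatorname{Stab}(C)=\{a\in A:C+a=C\}$ is contained in $\ker\phi_L$. For $g\in\{6,7\}$ one has $g-1\in\{5,6\}$, which is not a multiple of $4$, so $16=|A[2]|$ does not divide $(g-1)^2$; hence $A[2]\not\subseteq\operatorname{Stab}(C)$ and we may choose $a\in A[2]\setminus\operatorname{Stab}(C)$. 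Then $C':=C+a$ is an irreducible curve distinct from $C$ but algebraically equivalent to it, so $C\cdot C'=C^2=2g-2$. On the other hand $C'$ meets $A[2]$ exactly in $a+T$, so every point of $T\cap(a+T)$ lies on both $C$ and $C'$; since $T$ and $a+T$ are subsets of the $16$-element group $A[2]$, inclusion--exclusion gives $|T\cap(a+T)|\ge 2|T|-16=4g-12$, whence $C\cdot C'\ge 4g-12$. Comparing, $2g-2\ge 4g-12$, i.e.\ $g\le 5$ — a contradiction. Hence no such $C$ with $g\ge 6$ exists.

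The ingredients I would quote rather than reprove are standard: that the Abel--Jacobi map with a Weierstrass base point conjugates $\iota_C$ to $[-1]$, that a separable double cover of $\mathbb{P}^1$ has $2g+2$ branch points, and the elementary facts $\deg\phi_L=\chi(L)^2$ and $\operatorname{Stab}(C)\subseteq\ker\phi_L$ for an ample line bundle on an abelian surface. The main obstacle is the passage from the easy bound $g\le 7$ to $g\le 5$; the key idea making this work is to translate $C$ by a $2$-torsion point that does not stabilize it, producing a curve that shares too many $2$-torsion points with $C$ for the intersection number $C\cdot C'=C^2$ to permit. Once this idea is in place the conclusion is a one-line estimate, and the same inequality $2g-2\ge 4g-12$ (valid whenever such an $a$ exists) shows $g\le 5$ directly, so after Step~1 no genuine case analysis on the polarization type is required beyond the single divisibility remark $16\nmid(g-1)^2$.
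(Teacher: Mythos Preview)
Your proof is correct and follows essentially the same strategy as the paper: reduce to $\iota_A$-invariant $C$, get $2g+2\le 16$ from the Weierstrass points lying in $A[2]$, and then for $g\in\{6,7\}$ translate $C$ by a suitable $2$-torsion point to force too many common $2$-torsion points with the translate. Two small differences are worth noting: you justify the reduction to $\iota_A$-invariance via the Albanese/Abel--Jacobi map (the paper simply asserts ``we may assume''), and you handle $g=6$ and $g=7$ uniformly via the inclusion--exclusion bound $|T\cap(a+T)|\ge 4g-12$ together with $16\nmid(g-1)^2$ to produce $a\in A[2]\setminus\operatorname{Stab}(C)$, whereas the paper treats the two genera separately (for $g=6$ it exhibits such an $a$ directly as $x-y$ with $x\notin T$, $y\in T$, bypassing the stabilizer argument).
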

\begin{proof}
Suppose that an abelian surface $A$ contains a nonsingular hyperelliptic curve $C$
of genus $g$. We may assume that $C$ is symmetric under the inversion $\iota$
of $A$. 
Then, $C \cap A_{2}$ must contain $2g + 2$ points. 
Since the number of
points in $A_{2}$ is $16$, we have $g \leq 7$. Assume $g = 7$. Then, we have
$C \cap A_{2} = A_{2}$. 
If there exists a two-torsion point $x$ such that
$T_{x}^{*}C\neq C$, then we have $C^2 = (C, T_{x}^{*}C)\geq 16$.
Therefore, the genus of $C$ is greater than or equal to $16/2 + 1 = 9$, which
contradicts $g = 7$. Suppose $T_{x}^{*}C = C$ for any $x \in A_{2}$.
Then, the group scheme $K(C) = \Ker \varphi_{C}$ contains $A_{2}$,
where $\varphi_C$ is defined in Remark~\ref{algebra}.
On the other hand, by the Riemann-Roch theorem, we have 
$$
| K(C) | =\deg \varphi_{C} = (C^2/2)^2 = (g - 1)^2 = 36.
$$
Since $A_{2} \subset  K(C)$, $36$ must be divisible by $16$, a contradiction.
Hence, $A$ does not contain any nonsingular hyperelliptic curve of genus $7$.

Now, assume $g = 6$. Then, since $C$ is hyperelliptic,
we have $| C \cap A_{2}| = 2\times 6 + 2 = 14$.
Let $x$ be a point in $A_{2}$ that is not contained in $C \cap A_{2}$.
Take a point $y \in C \cap A_{2}$. Then, we have $C \neq T_{x-y}^{*}C$
and $C \cap T_{x-y}^{*}C \cap A_{2}$ contains more than or equal to $12$ points.
Therefore, we have $C^2 = (C, T_{x-y}^{*}C)\geq 12$. 
Hence, the genus
of $C$ must be greater than or equal to $12/2 + 1 = 7$, which contradicts $g = 6$.
Consequently $A$ does not contain any nonsingular hyperelliptic curve of genus $6$.
\end{proof}

\begin{remark}
Let $C$ be a nonsingular complete curve of genus $2$, and let $J(C)$ be a Jacobian
variety. 
Then, it is well-known that on the Kummer surface $\Km(J(C))$ there exists
a $(16_{6})$-configuration. We also have  a $(16_{10})$-configuration on some Kummer
surfaces, using a certain hyperelliptic curve of genus $4$ (see~Traynard~\cite{MR1509078},
Barth~and~Nieto~\cite{MR1257320}, Katsura~and~Kondo~\cite{MR2862188}). 
In this paper, we constructed 
a $(16_{12})$-configuration on the supersingular  $K3$ surface with Artin invariant $1$
in characteristic 5.
This seems to be the first example of $(16_{12})$-configurations on a $K3$ surface. 
To construct the configuration we use a hyperelliptic curve of genus $5$. 
By Proposition~\ref{prop,g}, we cannot construct $(16_{2\ell})$-configurations with $\ell \geq 7$
on a Kummer surface in a similar way to our method.
\end{remark}	
\begin{remark}
The supersingular K3 surface with Artin invariant $1$ in characteristic $5$ has 
an interesting example of a pencil of curves of genus $2$. 
Let $P$ be a point of $\P^2(\F_{25})\setminus C_F(\F_{25})$, 
and let $R_1$ and $R_2$ be the points on $X$ that are mapped to $P$ by $\pi_F: X\to \P^2$.
We take the blowing-up $\tilde{X}$ at the two points $R_{1}$, $R_{2}$ of $X$. 
Then, the pencil of lines passing through $P$ induces on 
$\tilde{X}$ a structure of fiber space 
over ${\P}^{1}$ whose general fiber is isomorphic to a smooth complete curve $C$
of genus $2$ defined by $y^2 = x^6 -1$.
The fiber space has exactly $6$ degenerate fibers corresponding to the tangent lines 
of $C_F$ passing through $P$.
Each degenerate fiber is a union of two smooth rational curves intersecting at one point 
with multiplicity $3$.

Let $C_{1}$ be the nonsingular complete model of the curve defined by
the equation $1 + x_{1}^6 + x_{2}^6 = 0$.
$G ={\Z}/6{\Z} = \langle \theta \rangle$ with a generator $\theta$.
We denote by $\xi$ a primitive $6$-th root of unity, and consider the action
$$
\begin{array}{clcllcl}
\theta : &x_{1} &  \mapsto & x_{1}, &  x_{2} & \mapsto & \xi x_{2} \\
    &  x & \mapsto & \xi x, &  y & \mapsto & y
\end{array}
$$
on the surface $C_{1}\times C$.  The group $G$ also acts on the curve $C_{1}$.
We set
$$
w = \sqrt{-1}(x_{2}/x)^{3}y,~z = x_{2}/x.
$$
Then, $x_{1}, w$ and $z$ are $G$-invariant and the quotient surface $(C_{1}\times C)/G$
is birationally isomorphic to the surface defined by $w^2 = z^{6} + 1 + x_{1}^6$.
The fiber space structure is given by $(C_{1}\times C)/G \rightarrow C_{1}/G$.
\end{remark}

\bibliographystyle{plain}

\def\cftil#1{\ifmmode\setbox7\hbox{$\accent"5E#1$}\else
  \setbox7\hbox{\accent"5E#1}\penalty 10000\relax\fi\raise 1\ht7
  \hbox{\lower1.15ex\hbox to 1\wd7{\hss\accent"7E\hss}}\penalty 10000
  \hskip-1\wd7\penalty 10000\box7} \def\cprime{$'$} \def\cprime{$'$}
  \def\cprime{$'$} \def\cprime{$'$}

\end{document}